\newcommand{\xxplus}{x^+}
\newcommand{\xxplusstar}{\xx_{\eta}}
\newcommand{\xx}{x}
\newcommand{\f}{f}
\newcommand{\xxstar}{x_\star}
\newcommand{\R}{\mathbb{R}}
\newcommand{\dom}{{\rm dom}\ }
\newcommand{\esp}[1]{\mathbb{E}\left[#1\right]}
\newcommand{\cN}{\mathcal{N}}
\newcommand{\m}{m}
\newcommand{\XX}{X}
\newcommand{\mean}{\m}
\newcommand{\mstar}{\m_\star}
\newcommand{\var}{\Sigma^2}
\newcommand{\varstar}{\Sigma^2_\star}
\newcommand{\vartilde}{\tilde{\Sigma}^2}
\newcommand{\mtilde}{\tilde{\m}}
\newcommand{\vareta}{\sigma^2_{\star, \eta}}
\newcommand{\xxplusbar}{\overline{\xxplus}}
\newcommand{\interior}{{\rm int}\ }
\newcommand{\cT}{\mathcal{T}}
\newcommand{\thetatilde}{\tilde{\theta}}
\newcommand{\at}[1]{^{(#1)}}
\theoremstyle{plain}
\newtheorem{theorem}{Theorem}[section]
\newtheorem{proposition}[theorem]{Proposition}
\newtheorem{lemma}[theorem]{Lemma}
\newtheorem{corollary}[theorem]{Corollary}
\newtheorem{assumption}{Assumption}
\theoremstyle{definition}
\newtheorem{definition}{Definition}
\theoremstyle{remark}
\newcommand{\gradg}{\omega}
\newcommand{\xplusf}{x^{+f}}
\icmltitlerunning{Investigating Variance Definitions for Mirror Descent with Relative Smoothness}
\begin{document}

\twocolumn[
\icmltitle{Investigating Variance Definitions for\\
Stochastic Mirror Descent with Relative Smoothness}




\begin{icmlauthorlist}
\icmlauthor{Hadrien Hendrikx}{inria}
\end{icmlauthorlist}

\icmlaffiliation{inria}{ Centre Inria de l'Univ. Grenoble Alpes
  CNRS, LJK, Grenoble, France}

\icmlcorrespondingauthor{Hadrien Hendrikx}{hadrien.hendrikx@inria.fr}

\icmlkeywords{}

\vskip 0.3in
]



\printAffiliationsAndNotice{}  

\begin{abstract}
Mirror Descent is a popular algorithm, that extends Gradients Descent (GD) beyond the Euclidean geometry. One of its benefits is to enable strong convergence guarantees through smooth-like analyses, even for objectives with exploding or vanishing curvature. This is achieved through the introduction of the notion of \emph{relative smoothness}, which holds in many of the common use-cases of Mirror descent. While basic deterministic results extend well to the relative setting, most existing stochastic analyses require additional assumptions on the mirror, such as strong convexity (in the usual sense), to ensure bounded variance. In this work, we revisit Stochastic Mirror Descent (SMD) proofs in the (relatively-strongly-) convex and relatively-smooth setting, and introduce a new (less restrictive) definition of variance which can generally be bounded (globally) under mild regularity assumptions. We then investigate this notion in more details, and show that it naturally leads to strong convergence guarantees for stochastic mirror descent. Finally, we leverage this new analysis to obtain convergence guarantees for the Maximum Likelihood Estimator of a Gaussian with unknown mean and variance.        
\end{abstract}

\section{Introduction}
\label{introduction}

The central problem of this paper is to solve optimization problems of the following form: 
\begin{equation}
    \min_{x \in C} f(x), \text{ where } f(x) = \esp{f_\xi(x)},
\end{equation}
where $C$ is a closed convex subset of $\R^d$, and $f_\xi$ are differentiable convex functions (stochasticity is on the variable $\xi$). The problems that we will consider typically arise from machine-learning use-cases, meaning that the dimension $d$ can be very large.  Therefore, first-order methods are very popular for solving these problems, since they usually scale very well with the dimension. 

In standard machine learning setups, computing a gradient of $f$ is very costly (or even impossible), since it requires computing gradients for all individual examples in the dataset. Yet, gradients of $f_\xi$ are relatively cheap, and arbitrarily high precisions are generally not required. This makes Stochastic Gradient Descent (SGD) the method of choice~\citep{bottou2010large}. The SGD updates can be written as:
\begin{equation} \label{eq:SGD}
    x^+_{\rm SGD} = \arg \min_{u \in C} \left\{\eta \nabla f_{\xi}(x)^\top u + \frac{1}{2}\|u - x\|^2\right\}.
\end{equation}
While the standard Euclidean geometry leading to Gradient Descent (GD) fits many use-cases quite well, several applications are better solved with \textit{Mirror Descent} (MD), a generalization of GD which allows to better capture the geometry of the problem. For instance, the Kullback-Leibler divergence might be better suited to discriminating between probability distributions than the (squared) Euclidean norm, and this is something that one can leverage using MD with entropy as a mirror. As a matter of fact, many standard algorithms can be interpreted as MD, \emph{i.e.}, as generalized first-order methods. This is for instance the case in statistics, where Expectation Minimization and Maximum A Posteriori estimators can be interpreted as running MD with specific mirror and step-sizes~\citep{kunstner2021homeomorphic,priol2021convergence}. Mirror descent can also be used to solve Poisson inverse problems, which have many applications in astronomy and medicine~\citep{bertero2009image}, to reduce the communication cost of distributed algorithms~\citep{shamir2014communication,hendrikx2020statistically}, or to solve convex quartic problems~\citep{dragomir2023convex}. In the online learning community as well, many standard algorithms such as Exponential Weight Updates or Follow-The-Regularized-Leader can be interpreted as running mirror descent~\citep{mcmahan2011follow,hoeven2018many}. There are still many open questions regarding the convergence guarantees for most of the algorithms mentioned above. Therefore, progress on the understanding of MD can lead to a plethora of results on these applications, and more generally to a more consistent theory for Majorization-Minimization algorithms. This paper is a stepping stone in this direction.

Let us now introduce the \emph{mirror map}, or \emph{potential} function $h$, together with the \emph{Bregman divergence} with respect to $h$, which is defined for $x,y \in \dom h$ as:
\begin{equation}
    D_h(x, y) = h(x) - h(y) - \nabla h(y)^\top(x - y).
\end{equation}
We now introduce the Stochastic Mirror Descent (SMD) update, which can be found in its deterministic form in, \emph{e.g.},~\citet{nemirovskij1983problem}. SMD consists in replacing the squared Euclidean norm from the SGD update~\eqref{eq:SGD} by the Bregman divergence with respect to the mirror map $h$:
\begin{equation}\label{eq:MD}
    \xxplus(\eta, \xi) = \arg \min_{u \in C} \left\{\eta \nabla f_{\xi}(x)^\top u + D_h(u, x)\right\}.
\end{equation}
Note that since $D_{\| \cdot \|^2}(x,y) = \|x - y\|^2$, one can recover SGD by taking $h = \frac{1}{2}\|\cdot\|^2$. In this sense, Stochastic Mirror Descent can be viewed as standard SGD, but changing the way distances are computed, and so the geometry of the problem. Yet, this change significantly complicates the convergence analysis of the method, since the Bregman divergence, \emph{in general}: 
\begin{enumerate}
    \item Does not satisfy the triangular inequality,
    \item Is not symmetric,
    \item Is not translation-invariant,
    \item Is not convex in its second argument.
\end{enumerate}
This means that analyzing mirror descent methods requires quite some care, and that many standard (S)GD results do not extend to the mirror setting. For instance, one can prove that mirror descent cannot be \emph{accelerated} in general~\citep{dragomir2021optimal}. Similarly, applying techniques such as variance-reduction requires additional assumptions~\citep{dragomir2021fast}.

To ensure that $\xxplus(\eta, \xi)$ exists and is unique, we first make the following blanket assumption throughout the paper:
\begin{assumption} \label{ass:blanket}
    Function $h:\R^d \rightarrow \R \cup \{\infty\}$ is twice continuously differentiable and strictly convex on $C$. For every $y\in \R^d$, the problem $\min_{x\in C} h(x) - x^\top y$ has a unique solution, which lies in $\interior C$, and all $f_\xi$ are convex.
\end{assumption}
Note that the regularity assumption on $h$ could be relaxed, as discussed in Section~\ref{sec:analysis}, but we choose a rather strong one to make sure all the objects we will manipulate are well-defined. Interestingly, while mirror descent changes the way distances are computed to move away from the Euclidean geometry, standard analyses of mirror descent methods, and in particular in the online learning community, still require strong convexity and Lipschitz continuity with respect to norms~\citep[Chapter 4]{bubeck2015convex}. It is only recently that a \emph{relative smoothness} assumption was introduced to study mirror descent~\citep{bauschke2017descent,lu2018relatively}, together with the corresponding relative strong convexity. 
\begin{definition}\label{def:rel_smoothness_sc}
    The function $f$ is said to be $L$-relatively smooth and $\mu$-relatively strongly convex with respect to $h$ if for all $x,y\in C$:
    \begin{equation}
        \mu D_h(x,y) \leq D_f(x,y) \leq L D_h(x,y).
    \end{equation}
\end{definition}
Definition~\ref{def:rel_smoothness_sc} extends the standard smooth and strongly convex assumptions that correspond to the case $h = \frac{1}{2}\|\cdot\|^2$, so that  for all $x\in C$, $\nabla^2 h(x) = I$ the identity matrix. These assumptions allow MD analyses to generalize standard GD analyses, and in particular to obtain similar linear and sublinear rates, with constant step-size and conditions adapted to the \emph{relative} assumptions. 

While the basic deterministic setting is now well-understood under relative assumptions, a good understanding of the stochastic setting remains elusive. In particular, as we will see in more details in the related work section, \emph{all existing proofs somehow require the mirror $h$ to be globally strongly convex with respect to a norm}, or have non-vanishing variance. The only case that can be analyzed tightly is under \emph{interpolation} (there exists a point that minimizes all stochastic functions), or when using Coordinate Descent instead of SMD~\citep{hanzely2021fastest,hendrikx2020dual}. This is a major weakness, as the goal of relative smoothness is precisely to avoid comparisons to norms. Indeed, even when these ``absolute'' regularity assumptions hold, the smoothness and strong convexity constants are typically very loose, and the theory is not representative of the observed behaviour of the algorithms.

However, as hinted at earlier, this was expected: acceleration is notoriously hard to achieve for mirror descent (and even impossible in general~\citep{dragomir2021optimal}), and variance reduction typically encounters the same problems~\citep{dragomir2021fast}. For stochastic updates, this comes from the fact that it is impossible to disentangle the stochastic gradient from the effect of the curvature of $h$ at the point at which it is applied.

\textbf{Contribution and outline.} The main contribution of this paper is to introduce a new analysis for mirror descent, with a variance notion which is provably bounded under mild regularity assumptions: typically, the same as those required for the deterministic case. We introduce our new variance notion, and compare it with standard ones from the literature in Section~\ref{sec:var}. This new analysis is both simpler and tighter than existing ones, as shown in Section~\ref{sec:analysis}. Finally, we use our results to analyse the convergence of the Maximum Likelihood and Maximum A Posteriori estimators for a Gaussian with unknown mean and variance in Section~\ref{sec:gaussian_mle_map}, and show that it is the first generic stochastic mirror descent analysis that obtains meaningful finite-time convergence guarantees in this case.  

\section{Variance Assumptions}
\label{sec:var}
We now focus on the various variance assumptions under which Stochastic Mirror Descent is analyzed. Some manipulations require technical lemmas, such as the duality property of the Bregman divergence or the Bregman co-coercivity lemma, which can be found in Appendix~\ref{app:bregman_technical}.

We start by introducing our variance definition, prove a few good properties for it, and then compare it with the existing ones to highlight their shortcomings. The two key properties we would like to ensure (and which are not satisfied by other definitions) are: (i) boundedness without strong convexity of $h$ or restricting the SMD iterates, and (ii) finiteness for $\eta \rightarrow 0$ (with the appropriate scaling). 

\subsection{New variance definition}
Let $\eta > 0$, and recall that $\xxplus(\eta, \xi)$ is the result of a stochastic mirror descent step from $\xx$ using function $f_\xi$ with step-size $\eta$ (Equation~\eqref{eq:MD}). From now on, when clear from the context, we will simply denote this point $\xxplus$. Yet, although the dependence is now implicit, do keep in mind that $\xxplus$ is a stochastic quantity that is not independent from $\xi$ nor $\eta$, as this is critical in most results. Under Assumption~\ref{ass:blanket}, $\xxplus$ writes:
\begin{equation} \label{eq:smd_bis}
    \nabla h(\xx^+) = \nabla h(\xx) - \eta \nabla f_\xi(\xx).
\end{equation}
Similarly, we denote by $\xxplusbar$ the deterministic Mirror Descent update, which is such that $\nabla h\left(\xxplusbar\right) = \nabla h(\xx) - \eta \nabla f(\xx)$. We also introduce $h^*: y\mapsto \arg\max_{x \in C} x^\top y - h(x)$ the convex conjugate of $h$, which verifies $\nabla h^*(\nabla h(x)) = x$. Let us now define the function 
\begin{equation}
    f_\eta(x) = f(\xx) - \frac{1}{\eta}\esp{D_h(\xx, \xx^+)},
\end{equation}
which is at the heart of our variance definition: 
\begin{definition} \label{def:var}
    We define the variance of the stochastic mirror descent iterates given by~\eqref{eq:MD} as:
    \begin{equation}
        \vareta = \frac{1}{\eta}\sup_{x\in C}\left(f(x_\star) - f_\eta(x)\right) = \frac{f^\star - f_\eta^\star}{\eta},
    \end{equation}
    where $f^\star$ and $f_\eta^\star$ are respectively the infima of $f$ and $f_\eta$.
\end{definition}

In order to simplify the presentation for this section, we make the following blanket assumption. However, note that most results hold without it, since derivations are performed for all $x$, and then specialized to $x = \xxplusstar$. In case $\xxplusstar$ is not achieved (if it is on the boundary of $C$ for instance), we can replace all results by a limit for $x \rightarrow \xxplusstar$.
\begin{assumption}
    If $\vareta < \infty$, the supremum is realized at point $\xxplusstar$, so that $\vareta = \frac{1}{\eta}\left(f(\xxstar) - f_\eta(\xxplusstar)\right)$.
\end{assumption}

We now state various bounds on $\vareta$, to help understand its behaviour. We start by positivity, which is an essential property that justifies the square in the definition. 
\begin{proposition}[Positivity]
    For all $\eta > 0$,  $\sigma_{\star,\eta} \geq 0$. 
\end{proposition}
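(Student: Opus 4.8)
The plan is to show that $f^\star - f_\eta^\star \geq 0$, i.e.\ that the infimum of $f_\eta$ is at most the infimum of $f$. Since $\sigma_{\star,\eta}^2 = (f^\star - f_\eta^\star)/\eta$ and $\eta > 0$, positivity of $\sigma_{\star,\eta}^2$ (hence reality of $\sigma_{\star,\eta}$) follows, and one may then take the nonnegative square root. The cleanest route is to compare $f_\eta$ and $f$ pointwise only at a cleverly chosen point: recall $f_\eta(x) = f(x) - \tfrac1\eta \esp{D_h(x, x^+)}$, so the key observation is that $D_h \geq 0$ by convexity of $h$ (Assumption~\ref{ass:blanket}), hence $\esp{D_h(x,x^+)} \geq 0$, hence $f_\eta(x) \leq f(x)$ for every $x \in C$.

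From $f_\eta \leq f$ pointwise on $C$, taking infima gives $f_\eta^\star \leq f^\star$, so $f^\star - f_\eta^\star \geq 0$ and therefore $\sigma_{\star,\eta}^2 \geq 0$, which is exactly the claim $\sigma_{\star,\eta}\geq 0$ (interpreting $\sigma_{\star,\eta}$ as the principal square root, possibly $+\infty$). Equivalently, one can argue directly from Definition~\ref{def:var}: for every $x$, $f(x_\star) - f_\eta(x) = f(x_\star) - f(x) + \tfrac1\eta\esp{D_h(x,x^+)} \geq f(x_\star) - f(x) \geq \dots$ — but this last step is not obviously nonnegative for arbitrary $x$, so one should instead evaluate at $x = x_\star$ (or, if using $\sup$, note the supremand at $x_\star$ already equals $\tfrac1\eta \esp{D_h(x_\star, x_\star^+)} \geq 0$, and the supremum over $C$ dominates this single value). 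This pins down a concrete lower bound $\sigma_{\star,\eta}^2 \geq \tfrac{1}{\eta^2}\esp{D_h(x_\star, x_\star^+)} \geq 0$.

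I expect essentially no obstacle here: the only content is nonnegativity of the Bregman divergence, guaranteed by convexity of $h$, together with the monotonicity of the infimum. The one subtlety worth a sentence is well-definedness: $f_\eta^\star$ could be $-\infty$, in which case $f^\star - f_\eta^\star = +\infty \geq 0$ still holds; and if $\sigma_{\star,\eta}^2 = +\infty$ the statement is vacuous, so the interesting case is when everything is finite, where the pointwise bound settles it. It may also be worth remarking that this justifies writing the quantity as a square, as the surrounding text indicates.
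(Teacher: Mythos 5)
Your proposal is correct and follows essentially the same argument as the paper: $f_\eta(x) \leq f(x)$ pointwise because $D_h(x,x^+) \geq 0$ by convexity of $h$, hence $f_\eta^\star \leq f^\star$ and the variance is nonnegative. The extra remarks (evaluating the supremand at $x_\star$, handling possible infinite values) are fine but not needed beyond the paper's one-line justification.
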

This result is direct from the fact that $f_\eta(x) \leq f(x)$ since $D_h(x, x^+) \geq 0$ for all $x \in C$ by convexity of $h$.

\paragraph{Stochastic functions after a step.} We first provide a result that upper bounds $\vareta$ directly in terms of $f_\xi$.
\begin{proposition} \label{prop:function_primal}
    If $f_\xi$ is $L$-relatively-smooth with respect to $h$ and $\eta \leq 1/L$, then:
    \begin{equation} \label{eq:maml}
        \vareta \leq \frac{1}{\eta} \left(f(\xxstar) - \min_{x\in C} \esp{f_\xi(\xxplus)}\right).
    \end{equation}
\end{proposition}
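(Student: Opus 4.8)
The strategy is to lower bound the function $f_\eta$ pointwise on $C$ by $x \mapsto \esp{f_\xi(\xxplus)}$. Indeed, since $x_\star$ minimises $f$ we have $f(x_\star) = f^\star$, so once the pointwise bound $f_\eta(x) \geq \esp{f_\xi(\xxplus)}$ is known for all $x\in C$, taking infima and using Definition~\ref{def:var} gives $\vareta = \tfrac1\eta\big(f^\star - \inf_{x\in C} f_\eta(x)\big) \leq \tfrac1\eta\big(f(\xxstar) - \min_{x\in C}\esp{f_\xi(\xxplus)}\big)$, which is exactly~\eqref{eq:maml} (no exchange of expectation and infimum is needed, only monotonicity of the infimum under a pointwise inequality). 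Since $f(x) = \esp{f_\xi(x)}$ and $f_\eta(x) = f(x) - \tfrac1\eta\esp{D_h(x,\xxplus)}$, by linearity of expectation it suffices to prove, for every realisation of $\xi$ and every $x\in C$, the one-step descent inequality $f_\xi(\xxplus) \leq f_\xi(x) - \tfrac1\eta D_h(x,\xxplus)$.

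To establish this descent inequality, I would proceed in three short steps. First, apply $L$-relative smoothness of $f_\xi$ with respect to $h$ (Definition~\ref{def:rel_smoothness_sc}) at the pair $(\xxplus, x)$: $D_{f_\xi}(\xxplus,x) \leq L\, D_h(\xxplus,x)$, i.e. $f_\xi(\xxplus) \leq f_\xi(x) + \nabla f_\xi(x)^\top(\xxplus - x) + L\, D_h(\xxplus,x)$. Second, use the optimality characterisation~\eqref{eq:smd_bis} of the SMD step, which holds with equality because Assumption~\ref{ass:blanket} forces $\xxplus \in \interior C$, so that $\eta\nabla f_\xi(x) = \nabla h(x) - \nabla h(\xxplus)$; combined with the three-point identity $(\nabla h(x) - \nabla h(\xxplus))^\top(\xxplus - x) = -D_h(x,\xxplus) - D_h(\xxplus,x)$, this yields $\nabla f_\xi(x)^\top(\xxplus - x) = -\tfrac1\eta\big(D_h(x,\xxplus) + D_h(\xxplus,x)\big)$. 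Third, substitute to obtain $f_\xi(\xxplus) \leq f_\xi(x) - \tfrac1\eta D_h(x,\xxplus) + (L - \tfrac1\eta)D_h(\xxplus,x)$, and conclude using $\eta \leq 1/L$ and $D_h \geq 0$ (convexity of $h$) to discard the last term.

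The argument is essentially mechanical, so there is no serious obstacle; the two points that must be handled with care are (i) that~\eqref{eq:smd_bis} is an \emph{equality}, which is precisely what the ``minimiser lies in $\interior C$'' clause of Assumption~\ref{ass:blanket} guarantees, and (ii) that the per-sample bound genuinely requires relative smoothness of each $f_\xi$ (not merely of the average $f$), which is why this hypothesis appears in the statement.
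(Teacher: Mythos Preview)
Your proof is correct and follows essentially the same approach as the paper: both establish the per-sample descent inequality $\tfrac{1}{\eta}D_h(x,\xxplus)\leq f_\xi(x)-f_\xi(\xxplus)$ using the symmetrized Bregman identity, the SMD update equation~\eqref{eq:smd_bis}, and relative smoothness with $\eta\leq 1/L$, then take expectations and infima. The only cosmetic difference is that the paper starts by expanding $D_h(x,\xxplus)$ and bounds it from above, whereas you start from the relative-smoothness upper bound on $f_\xi(\xxplus)$ and substitute; the algebra and ingredients are identical.
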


\begin{proof}
    We write:
    \begin{align*}
        D_h(\xx,& \xxplus) = \langle \nabla h(\xxplus) - \nabla h(x), \xxplus - \xx\rangle - D_h(\xxplus, \xx)\\
        &= - \eta \nabla f_\xi(\xx)^\top(\xxplus - \xx) - D_h(\xxplus, \xx)\\
        &= \eta \left(D_{f_\xi}(\xxplus, \xx) - f_\xi(\xxplus) + f_\xi(\xx)\right) - D_h(\xxplus, \xx).
    \end{align*}
    The relative smoothness of $f_\xi$ and the step-size condition imply that $\eta D_{f_\xi}(\xxplus, \xx) \leq D_{h}(\xxplus, \xx)$, leading to $\frac{1}{\eta}D_h(\xx, \xxplus) \leq f_\xi(\xx) - f_\xi(\xxplus)$, and the result follows.
\end{proof}

This bound offers a new point of view on the variance, which can be bounded as the difference between the optimum of $f$, and the optimum of a related function, in which we make one mirror descent step before evaluating each $f_\xi$. Note that we can also allow this gradient step on the $f$ part, but it would not change anything since $\overline{\xxstar^+} = \xxstar$. In this sense, if we define the operator $\cT(g) = g(\nabla h^*(\nabla h(x) - \eta \nabla g(x)))$, then the bound from proposition~\eqref{prop:function_primal} becomes: 
\begin{equation} \label{eq:var_fplusxi}
    \vareta \leq \min_{x\in C} \cT(\esp{f_\xi})(x) - \min_{x \in C} \esp{\cT(f_\xi)}(x). 
\end{equation}
We recognize the structure of a variance, as the difference between an operator applied to the expectation of a random variable, and the expectation of the operator applied to the random variable. Yet, compared to standard (Euclidean) analyses of SGD, it does not simply corresponds to the variance of the stochastic gradients (at optimum), and bears a more complex form. We will see that $\vareta$ can actually directly be expressed in such a form, by using the $c$-transform. 

An interesting observation is that $\esp{f_\xi(\xxplus)}$ is reminescent of meta-learning objectives~\citep{finn2017model}. This connection highlights that the variance is bounded by the difference between the standard objective and the MAML one.

\textbf{Finiteness.} Proposition~\ref{prop:function_primal} implies the following: 
\begin{corollary}    
 \label{cor:comparison_function_diff}
    If $f_\xi$ is $L$-relatively-smooth w.r.t. $h$ and admits a minimum $x_\star^\xi \in \interior C$ a.s., then for all $\eta \leq 1/L$, $\vareta \leq \frac{f(\xxstar) - \esp{f_\xi(\xxstar^{\xi})}}{\eta}$. In particular, $\vareta$ is finite.
\end{corollary}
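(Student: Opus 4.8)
The plan is to read the corollary off Proposition~\ref{prop:function_primal} almost for free, using only that a mirror step from \emph{any} point lands inside $C$ and therefore cannot beat the minimiser of the stochastic function that defines it. Fix $\eta \le 1/L$, so that Proposition~\ref{prop:function_primal} applies and, by~\eqref{eq:maml}, $\vareta \le \frac{1}{\eta}\big(f(\xxstar) - \min_{x\in C}\esp{f_\xi(\xxplus)}\big)$. It then suffices to show $\min_{x\in C}\esp{f_\xi(\xxplus)} \ge \esp{f_\xi(x_\star^\xi)}$, after which substituting back gives the announced bound.

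To get that inequality I would first note that for every $x\in C$ the iterate $\xxplus = \nabla h^*(\nabla h(x) - \eta \nabla f_\xi(x))$ lies in $\interior C$ by Assumption~\ref{ass:blanket}, and that $x_\star^\xi$, being an interior minimiser of the differentiable convex function $f_\xi$, satisfies $\nabla f_\xi(x_\star^\xi) = 0$ and is hence a global minimiser of $f_\xi$. Consequently $f_\xi(\xxplus) \ge f_\xi(x_\star^\xi)$ for every realisation of $\xi$ and every $x\in C$; taking expectations yields $\esp{f_\xi(\xxplus)} \ge \esp{f_\xi(x_\star^\xi)}$ for all $x\in C$, and taking the infimum over $x$ preserves the inequality since the right-hand side does not depend on $x$. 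Plugging this into the display above gives $\vareta \le \big(f(\xxstar) - \esp{f_\xi(x_\star^\xi)}\big)/\eta$.

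For the finiteness claim I would combine this with positivity ($\vareta \ge 0$): we then have $0 \le \vareta \le \big(f(\xxstar) - \esp{f_\xi(x_\star^\xi)}\big)/\eta$, and the upper bound is a genuine real number because $x_\star^\xi$ minimises $f_\xi$, so $f_\xi(x_\star^\xi) \le f_\xi(\xxstar)$ a.s. (as $\xxstar\in C$), whence $\esp{f_\xi(x_\star^\xi)} \le \esp{f_\xi(\xxstar)} = f(\xxstar) < \infty$, while integrability from below is exactly what makes $f = \esp{f_\xi}$ well defined on $C$. I do not anticipate a real obstacle here: the only points needing care are remembering to impose $\eta \le 1/L$ before invoking Proposition~\ref{prop:function_primal}, and — if the supremum in Definition~\ref{def:var} is attained only on the boundary of $C$ — running the same chain of inequalities along a sequence $x \to \xxplusstar$ as flagged in the remark after the blanket assumption, which is harmless because every inequality above is pointwise in $x$.
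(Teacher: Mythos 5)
Your argument is correct and follows essentially the same route as the paper: invoke Proposition~\ref{prop:function_primal} for $\eta \le 1/L$, then lower-bound $\min_{x\in C}\esp{f_\xi(\xxplus)}$ by $\esp{f_\xi(\xxstar^\xi)}$ using that $x_\star^\xi$ globally minimises $f_\xi$ (the paper phrases this as $\min_{x}\esp{f_\xi(\xxplus)} \ge \esp{\min_x f_\xi(\xxplus)} \ge \esp{f_\xi(\xxstar^\xi)}$, which is the same pointwise bound followed by expectation). Your finiteness discussion matches the paper's implicit assumption that $\esp{f_\xi(\xxstar^\xi)} > -\infty$, so nothing further is needed.
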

This result directly comes from the fact that $\min_{x\in C} \esp{f_\xi(\xxplus)} \geq \esp{\min_{x\in C}f_\xi(\xxplus)} \geq \esp{f_\xi(\xxstar^\xi)}$. It shows that the standard regularity assumptions for the convergence of stochastic mirror descent guarantee that \emph{the variance as introduced in Definition~\ref{def:var} remains bounded.} This is a strong result, that justifies the supremum in the variance definition. Indeed, \textbf{most other variance definitions require additional assumptions for the variance to remain bounded after the supremum}. Instead, we \emph{globalize} the variance definition, by taking the supremum over the right quantity to ensure that it remains bounded over the whole domain without having to explicitly assume it.

Note that the bound from Corollary~\ref{cor:comparison_function_diff} has already been investigating in other settings for stochastic optimization~\citep{loizou2021stochastic}, as discussed in Section~\ref{sec:other_var_def}. While useful to show boundedness, this bound has a major drawback, which is that it0 explodes when the step-size $\eta$ vanishes. This does not reflect what happens in practice, which is why we investigate finer bounds on $\vareta$. 

\paragraph{Gradient norm at optimum.} A usual way of formulating variance is to express it as the norm of the difference between stochastic gradients and the deterministic gradients. While the previous bounds highlight dependencies on the gradient steps (through evaluations at $\xxplus$), none of them really corresponds to ``the size of the stochastic gradients at optimum''. The key subtlety is that when using mirror descent, it is important to also specify the point at which these gradients are applied, and the following proposition gives a bound of this flavor on $\vareta$.

\begin{proposition} \label{prop:var_div_sto_det}
If $f$ is $L$-relatively-smooth w.r.t. $h$, $\eta \leq 1/L$ and $\xxstar \in \interior C$ we have:
    \begin{equation}\label{eq:SMD_opt_var}
        \vareta \leq \frac{1}{\eta^2}\esp{D_h\left(\overline{\xxplusstar^+}, \xxplusstar^+\right)}.
    \end{equation}
\end{proposition}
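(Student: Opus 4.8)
The plan is to expand the definition of $\vareta$ in terms of $f$ and $\esp{D_h(\xxplusstar, \xxplusstar^+)}$, use the three-point identity of the Bregman divergence to peel off exactly the term $\esp{D_h(\overline{\xxplusstar^+}, \xxplusstar^+)}$, and absorb everything that is left using the deterministic relative-smoothness descent inequality at $\xxplusstar$.

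\textbf{Step 1 (rewrite the variance).} Since the supremum in Definition~\ref{def:var} is attained at $\xxplusstar$, and $f_\eta(x) = f(x) - \frac1\eta \esp{D_h(x,x^+)}$, we have $\eta^2 \vareta = \eta\big(f(\xxstar) - f(\xxplusstar)\big) + \esp{D_h(\xxplusstar, \xxplusstar^+)}$. (If the supremum is not attained, replace $\xxplusstar$ by a maximizing sequence and pass to the limit.)

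\textbf{Step 2 (split the divergence).} Apply the three-point identity $D_h(a,c) = D_h(a,b) + D_h(b,c) + \langle \nabla h(b) - \nabla h(c),\, a - b\rangle$ with $a = \xxplusstar$, $b = \overline{\xxplusstar^+}$, $c = \xxplusstar^+$. By~\eqref{eq:smd_bis} and the definition of $\overline{\xxplusstar^+}$ one has $\nabla h(\overline{\xxplusstar^+}) - \nabla h(\xxplusstar^+) = \eta\big(\nabla f_\xi(\xxplusstar) - \nabla f(\xxplusstar)\big)$, which is centered, while $\xxplusstar$ and $\overline{\xxplusstar^+}$ are deterministic; hence the cross term vanishes in expectation and $\esp{D_h(\xxplusstar, \xxplusstar^+)} = D_h(\xxplusstar, \overline{\xxplusstar^+}) + \esp{D_h(\overline{\xxplusstar^+}, \xxplusstar^+)}$. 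This is the key step, and the reason $\overline{\xxplusstar^+}$ is the right decomposition point: its mirror image is precisely $\esp{\nabla h(\xxplusstar^+)}$.

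\textbf{Step 3 (deterministic descent lemma) and conclusion.} Writing $\nabla f(\xxplusstar) = \frac1\eta\big(\nabla h(\xxplusstar) - \nabla h(\overline{\xxplusstar^+})\big)$ and using $L$-relative smoothness of $f$ together with $\eta \le 1/L$ and the three-point identity gives the standard descent inequality $f(\overline{\xxplusstar^+}) \le f(\xxplusstar) - \frac1\eta D_h(\xxplusstar, \overline{\xxplusstar^+})$; since $\xxstar$ minimizes $f$ on $C$ and $\overline{\xxplusstar^+}\in C$, this yields $D_h(\xxplusstar, \overline{\xxplusstar^+}) \le \eta\big(f(\xxplusstar) - f(\xxstar)\big)$. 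Plugging Steps~2 and~3 into Step~1, the first term and $D_h(\xxplusstar, \overline{\xxplusstar^+})$ cancel up to a nonpositive remainder, leaving $\eta^2 \vareta \le \esp{D_h(\overline{\xxplusstar^+}, \xxplusstar^+)}$, which is the claim. I expect the only delicate points to be checking that the cross term in Step~2 is indeed centered (using that $\xxplusstar$ is deterministic) and invoking the correct one-sided form of the relative-smoothness descent lemma; the interiority of $\xxstar$ and the supremum-attainment assumption are used only to keep all the objects well-defined.
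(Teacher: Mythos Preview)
Your proof is correct and follows essentially the same two-step strategy as the paper: decompose $\esp{D_h(\xxplusstar,\xxplusstar^+)}$ into the deterministic piece $D_h(\xxplusstar,\overline{\xxplusstar^+})$ plus the residual $\esp{D_h(\overline{\xxplusstar^+},\xxplusstar^+)}$, then bound the deterministic piece by $\eta\big(f(\xxplusstar)-f(\xxstar)\big)$ via relative smoothness.

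The only noteworthy difference is in the tools used for each step. For the decomposition, the paper passes to the dual via $D_h(x,x^+)=D_{h^*}(\nabla h(x^+),\nabla h(x))$ and applies the Bregman bias--variance lemma (Pfau), whereas you stay in the primal and use the three-point identity together with the observation that the cross term is centered because $\overline{\xxplusstar^+}$ satisfies $\nabla h(\overline{\xxplusstar^+})=\esp{\nabla h(\xxplusstar^+)}$; these two derivations are literally the same identity written on opposite sides of the Legendre transform. For the bound on $D_h(\xxplusstar,\overline{\xxplusstar^+})$, the paper invokes the Bregman cocoercivity lemma together with $\nabla f(\xxstar)=0$ (this is where $\xxstar\in\interior C$ is used), while you obtain the same inequality from the one-step descent lemma $f(\overline{\xxplusstar^+})\le f(\xxplusstar)-\tfrac1\eta D_h(\xxplusstar,\overline{\xxplusstar^+})$ followed by $f(\xxstar)\le f(\overline{\xxplusstar^+})$. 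Your route is slightly more elementary in that it avoids quoting duality and cocoercivity as black boxes; the paper's route makes more visible that the decomposition is exactly a bias--variance split in the mirror geometry.
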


This can safely be considered as the Mirror Descent equivalent of $\esp{\|\nabla f_\xi(\xxstar)\|^2}$. Yet, a key difference is that stochastic gradients are evaluated at point $\xxplusstar$ instead of $\xxstar$, and $\nabla f(\xxplusstar) \neq 0$ in general.

\begin{proof}
    For all $x$, we have:
    \begin{align*}
        &\esp{D_h(\xx, \xxplus)} = \esp{D_{h^*}(\nabla h(\xxplus), \nabla h(\xx))}\\
        &= \esp{D_{h^*}(\nabla h(\xx) - \eta \nabla f_\xi(\xx), \nabla h(\xx))}\\
        &= \esp{D_{h^*}(\nabla h(\xx) - \eta \nabla f(\xx), \nabla h(\xx))}\\
        &+ \esp{D_{h^*}(\nabla h(\xx) - \eta \nabla f_\xi(\xx), \nabla h(\xx) - \eta \nabla f(\xx))}\\
        &= \esp{D_{h^*}(\nabla h(\xx) - \eta \left[\nabla f(\xx) - \nabla f(\xxstar)\right], \nabla h(\xx))}\\
        &+ \esp{D_h\left(\xxplusbar, \xxplus\right)}\\
        &\leq \eta D_f(\xx, \xxstar) + \esp{D_h\left(\xxplusbar, \xxplus\right)}.
    \end{align*}
    Here, the first equality comes from the duality property of the Bregman divergence, the third one from the Bregman bias-variance decomposition Lemma~\citep{pfau2013generalized}, and the last inequality from the Bregman cocoercivity Lemma~\citep{dragomir2021fast}. All these technical results can be found in Appendix~\ref{app:bregman_technical}. Therefore, 
    \begin{equation}
        f_\eta(\xx) \geq f(\xxstar) - \frac{1}{\eta} \esp{D_h(\xxplusbar, \xxplus)},
    \end{equation}
    and this is in particular true for $\xx = \xxplusstar$. 
\end{proof}

\textbf{Limit behaviour.} A first observation is that both the $D_h(x,x^+)$ term in the definition of $f_\eta$ and our variance definition are scaled by $\eta^{-1}$, potentially indicating that they blow up when $\eta \rightarrow 0$. While it is clear that it is not the case in the Euclidean setting, this property holds more generally, as shown in the two following results. 
\begin{proposition}
\label{prop:continuity_feta}
Let $x \in C$ and $\eta_0 > 0$ be such that $\mathbb{E}D_h\!\left(x, x^+(\eta_0, \xi)\right) < \infty$ . Then, $f_\eta(x) \rightarrow f(x)$ for $\eta \rightarrow 0$.
\end{proposition}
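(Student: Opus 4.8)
The plan is to show directly that the only $\eta$-dependent term in $f_\eta(x)$, namely $\tfrac1\eta\esp{D_h(\xx,\xxplus(\eta,\xi))}$, vanishes as $\eta\to0$. The natural route is to pass to the dual picture: by the duality property of the Bregman divergence (Appendix~\ref{app:bregman_technical}), for each fixed $\xi$,
\begin{equation*}
\tfrac1\eta D_h\!\left(\xx,\xxplus(\eta,\xi)\right) = \tfrac1\eta D_{h^*}\!\left(\nabla h(\xx)-\eta\nabla f_\xi(\xx),\,\nabla h(\xx)\right) =: \psi_\xi(\eta).
\end{equation*}
I would then study the real function $\phi_\xi(\eta) := D_{h^*}(\nabla h(\xx)-\eta\nabla f_\xi(\xx),\nabla h(\xx))$ on $[0,\eta_0]$ (this is well-defined since $\dom h^*=\R^d$ under Assumption~\ref{ass:blanket}). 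It satisfies $\phi_\xi(0)=0$, it is convex in $\eta$ (composition of the convex $h^*$ with an affine map, plus an affine term), and it is differentiable at $0$ with $\phi_\xi'(0)=\nabla f_\xi(\xx)^\top\bigl(\xx-\nabla h^*(\nabla h(\xx))\bigr)=0$ because $\nabla h^*(\nabla h(\xx))=\xx$.

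From these three facts I get the two ingredients needed for a convergence-theorem argument. First, the pointwise limit: $\psi_\xi(\eta)=\phi_\xi(\eta)/\eta=\bigl(\phi_\xi(\eta)-\phi_\xi(0)\bigr)/\eta\to\phi_\xi'(0)=0$ as $\eta\to0$, for every $\xi$. Second, a domination/monotonicity bound: for a convex function vanishing at the origin, the difference quotient $\eta\mapsto\phi_\xi(\eta)/\eta$ is nondecreasing on $(0,\infty)$ (differentiate: $\tfrac{d}{d\eta}(\phi_\xi(\eta)/\eta)=\eta^{-2}(\eta\phi_\xi'(\eta)-\phi_\xi(\eta))$, and the numerator is $0$ at $\eta=0$ with derivative $\eta\phi_\xi''(\eta)\ge0$). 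Hence $0\le\psi_\xi(\eta)\le\psi_\xi(\eta_0)$ for all $\eta\le\eta_0$, and $\esp{\psi_\xi(\eta_0)}=\tfrac1{\eta_0}\esp{D_h(\xx,\xxplus(\eta_0,\xi))}<\infty$ by hypothesis.

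With a nonnegative, monotone (in $\eta$) integrand dominated by an integrable function, monotone (or dominated) convergence gives $\tfrac1\eta\esp{D_h(\xx,\xxplus(\eta,\xi))}=\esp{\psi_\xi(\eta)}\to\esp{0}=0$, so $f_\eta(\xx)=f(\xx)-\esp{\psi_\xi(\eta)}\to f(\xx)$. The only genuinely delicate point is the exchange of limit and expectation, which is exactly what the assumption $\esp{D_h(\xx,\xxplus(\eta_0,\xi))}<\infty$ is tailored for; the convexity-based monotonicity of the difference quotient is the lemma that turns that single finite-$\eta_0$ bound into a uniform dominating function on the whole interval. A minor check along the way is that $\phi_\xi$ is genuinely differentiable at $0$ (resp.\ twice differentiable), which follows from the $C^2$ and strict convexity hypotheses on $h$ in Assumption~\ref{ass:blanket} transferring the needed regularity to $h^*$.
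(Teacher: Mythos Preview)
Your proposal is correct and follows the same overall strategy as the paper: establish (i) pointwise convergence $\tfrac{1}{\eta}D_h(x,x^+(\eta,\xi))\to 0$ for each $\xi$, (ii) monotonicity of $\eta\mapsto \tfrac{1}{\eta}D_h(x,x^+(\eta,\xi))$ to get a dominating function at $\eta_0$, and (iii) apply the dominated convergence theorem.

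The technical route for (i) and (ii) differs slightly. The paper shows (i) by a second-order Taylor expansion of $D_{h^*}$, writing $\tfrac{1}{\eta}D_h(x,x^+)=\tfrac{\eta}{2}\|\nabla f_\xi(x)\|^2_{\nabla^2 h^*(z)}$ for some intermediate $z$, and shows (ii) via an explicit primal computation (Lemma~\ref{lemma:increasing_eta}) that $\nabla_\eta\bigl[\tfrac{1}{\eta}D_h(x,x^+)\bigr]=\tfrac{1}{\eta^2}D_h(x^+,x)\ge 0$. You instead pass to the dual, observe that $\phi_\xi(\eta)=D_{h^*}(\nabla h(x)-\eta\nabla f_\xi(x),\nabla h(x))$ is convex in $\eta$ with $\phi_\xi(0)=0$ and $\phi_\xi'(0)=0$, and use the standard fact that the difference quotient of a convex function through the origin is nondecreasing. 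Your argument is arguably more elementary, since it does not require an ad hoc lemma and the monotonicity of $\phi_\xi(\eta)/\eta$ follows from convexity alone (indeed, $\phi_\xi(\eta_1)\le \tfrac{\eta_1}{\eta_2}\phi_\xi(\eta_2)$ for $0<\eta_1<\eta_2$ is immediate without invoking $\phi_\xi''$); the paper's Lemma~\ref{lemma:increasing_eta}, on the other hand, gives an explicit primal expression for the derivative that is reused elsewhere. Both approaches rely on the same regularity from Assumption~\ref{ass:blanket} and land on the same DCT step, so this is a variation in the supporting lemma rather than a different proof architecture.
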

Note that uniform convergence of $f_\eta$ to $f$ would require that there exists $\eta > 0$ such that $\sup_{x\in C}D_h(x,x^+)$ is finite, which we cannot guarantee in general (it does not hold for $f=g=\frac{1}{2}\|\cdot\|^2$ defined on $\R^d$ for instance). Denote $\|x\|^2_A = x^\top A x$, then:
\begin{proposition}[Small step-sizes limit] \label{prop:limit_eta_0} If $f_\xi$ are $L$-relatively-smooth and $f$ has a unique minimizer $\xxstar$ then:
\begin{align}
    \label{eq:limit_var} \lim_{\eta \rightarrow 0} \vareta &=  \lim_{\eta \rightarrow 0} \frac{1}{\eta^2}\esp{D_h(\xxstar^+, \xxstar)}\\
    &=  \frac{1}{2}\esp{\|\nabla f_\xi(\xxstar)\|^2_{\nabla^2 h(\xxstar)^{-1}}}.
\end{align}    
\end{proposition}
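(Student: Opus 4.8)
The plan is to establish the two equalities in Proposition~\ref{prop:limit_eta_0} in sequence, using the bounds already proved plus a matching lower bound, and then a second-order Taylor expansion of the convex conjugate.

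\emph{Step 1: the first equality.} Proposition~\ref{prop:var_div_sto_det} already gives $\vareta \leq \frac{1}{\eta^2}\esp{D_h(\overline{\xxplusstar^+}, \xxplusstar^+)}$. As $\eta \to 0$, Proposition~\ref{prop:continuity_feta}-type reasoning (continuity of $\nabla h^*$ and $\nabla h$) shows that $\xxplusstar \to \xxstar$, so the right-hand side converges to $\lim_{\eta\to 0}\frac{1}{\eta^2}\esp{D_h(\xxstar^+,\xxstar)}$ provided we can interchange the limit and the inequality; since $\nabla f(\xxstar)=0$ (as $\xxstar\in\interior C$), the extra term $\eta D_f(\xxplusstar,\xxstar)$ in the proof of Proposition~\ref{prop:var_div_sto_det} is $o(\eta^2)$, so the bound is asymptotically tight. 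For the reverse direction I would produce a lower bound: starting from the identity $D_h(x,\xxplus) = \eta(D_{f_\xi}(\xxplus,x) - f_\xi(\xxplus) + f_\xi(x)) - D_h(\xxplus,x)$ derived in the proof of Proposition~\ref{prop:function_primal}, evaluate at $x = \xxplusstar$ and expand; the relative-smoothness slack terms vanish to leading order, leaving $f_\eta(\xxplusstar) \leq f(\xxstar) - \frac{1}{\eta}\esp{D_h(\xxstar^+,\xxstar)} + o(\eta)$ in the limit, which combined with Step~1's upper bound pins down $\lim \vareta = \lim \frac{1}{\eta^2}\esp{D_h(\xxstar^+,\xxstar)}$.

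\emph{Step 2: the second equality.} Here I compute $\lim_{\eta\to 0}\frac{1}{\eta^2}D_h(\xxstar^+,\xxstar)$ for a fixed realization of $\xi$, then pass the limit through the expectation. Using the duality property $D_h(\xxstar^+, \xxstar) = D_{h^*}(\nabla h(\xxstar), \nabla h(\xxstar^+)) = D_{h^*}(\nabla h(\xxstar), \nabla h(\xxstar) - \eta\nabla f_\xi(\xxstar))$, a second-order Taylor expansion of $h^*$ around $\nabla h(\xxstar)$ gives $D_{h^*}(\nabla h(\xxstar), \nabla h(\xxstar) - \eta\nabla f_\xi(\xxstar)) = \frac{\eta^2}{2}\nabla f_\xi(\xxstar)^\top \nabla^2 h^*(\nabla h(\xxstar))\nabla f_\xi(\xxstar) + o(\eta^2)$. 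Since $\nabla^2 h^*(\nabla h(\xxstar)) = \nabla^2 h(\xxstar)^{-1}$ (differentiating $\nabla h^*(\nabla h(x)) = x$), dividing by $\eta^2$ and letting $\eta\to 0$ yields $\frac{1}{2}\|\nabla f_\xi(\xxstar)\|^2_{\nabla^2 h(\xxstar)^{-1}}$ pointwise in $\xi$.

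\emph{Step 3: exchanging limit and expectation.} The main obstacle is the dominated-convergence argument needed to conclude $\lim \esp{\cdot} = \esp{\lim \cdot}$ in Step~2, since we only assume relative smoothness, not uniform bounds. I expect this to require either a monotonicity argument in $\eta$ (the map $\eta \mapsto \frac{1}{\eta^2}D_{h^*}(\nabla h(\xxstar), \nabla h(\xxstar)-\eta\nabla f_\xi(\xxstar))$ can be shown to be monotone, or bounded by its value at some $\eta_0$, using convexity of $h^*$ along the segment), so that monotone/dominated convergence applies, or an additional integrability hypothesis folded into the statement; the relative-smoothness and the finiteness hypotheses from Proposition~\ref{prop:continuity_feta} should suffice to control the tail. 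The rest — the Taylor expansions and the identity $\nabla^2 h^* = (\nabla^2 h)^{-1}$ — is routine given Assumption~\ref{ass:blanket}, which guarantees $h$ is twice continuously differentiable and strictly convex so $h^*$ is differentiable with the required regularity near $\nabla h(\xxstar)$.
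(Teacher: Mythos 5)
Your overall strategy --- squeezing $\vareta$ between an upper and a lower bound whose $\eta \to 0$ limits coincide, then identifying the limit via a second-order expansion of $h^*$ around $\nabla h(\xxstar)$ --- is the same as the paper's, and your Steps 2--3 (the pointwise limit $\frac{1}{\eta^2}D_h(\xxstar^+,\xxstar) \to \frac{1}{2}\|\nabla f_\xi(\xxstar)\|^2_{\nabla^2 h(\xxstar)^{-1}}$ and the concern about exchanging limit and expectation) are if anything more careful than the paper, which relies on the same expansion used in the proof of Proposition~\ref{prop:continuity_feta}. The gaps are in your Step 1, i.e.\ in the two bounds themselves. For the lower bound, the route you sketch (evaluating the identity from the proof of Proposition~\ref{prop:function_primal} at $x=\xxplusstar$ and letting ``relative-smoothness slack terms vanish'') does not obviously yield $f_\eta(\xxplusstar) \leq f(\xxstar) - \frac{1}{\eta}\esp{D_h(\xxstar^+,\xxstar)} + o(\eta)$: that identity involves only the single point $\xxplusstar$ and never produces the quantity $\esp{D_h(\xxstar^+,\xxstar)}$, which lives at the different point $\xxstar$. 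The inequality you need holds exactly and in one line, which is the paper's argument: since $\xxplusstar$ minimizes $f_\eta$, one has $f_\eta(\xxplusstar) \leq f_\eta(\xxstar) = f(\xxstar) - \frac{1}{\eta}\esp{D_h(\xxstar,\xxstar^+)}$, hence $\vareta \geq \frac{1}{\eta^2}\esp{D_h(\xxstar,\xxstar^+)}$ with no expansion and no $o(\eta)$.

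For the upper bound, going through Proposition~\ref{prop:var_div_sto_det} is both costlier and incompletely justified. Its limit is $\frac{1}{2}\esp{\|\nabla f_\xi(\xxstar)-\nabla f(\xxstar)\|^2_{\nabla^2 h(\xxstar)^{-1}}}$, so the squeeze closes only if $\nabla f(\xxstar)=0$; you therefore need $\xxstar \in \interior C$ (also a hypothesis of Proposition~\ref{prop:var_div_sto_det} itself), an assumption absent from the statement and unnecessary for the conclusion. Moreover, the claim $\eta D_f(\xxplusstar,\xxstar)=o(\eta^2)$ is asserted without proof; from $f_\eta(\xxplusstar)\leq f_\eta(\xxstar)$ one only gets $D_f(\xxplusstar,\xxstar)=O(\eta)$, i.e.\ $O(\eta^2)$ for that term, and in any case such a tightness claim is not needed for a one-sided bound. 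The paper instead bounds directly from the definition: $\vareta = \frac{1}{\eta}\bigl(f(\xxstar)-f(\xxplusstar)\bigr) + \frac{1}{\eta^2}\esp{D_h(\xxplusstar,\xxplusstar^+)} \leq \frac{1}{\eta^2}\esp{D_h(\xxplusstar,\xxplusstar^+)}$, using $f(\xxplusstar)\geq f(\xxstar)$, and then uses $\xxplusstar \to \xxstar$ (from pointwise convergence $f_\eta \to f$ and uniqueness of the minimizer) so that this upper bound has the same limit as the lower bound. Replacing your Step~1 by these two elementary inequalities removes both gaps; your Steps~2 and~3 can then stand essentially as written.
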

This variance is actually the best we can hope for in the Bregman setting, which indicates the relevance of Definition~\ref{def:var}. Indeed, this term exactly correspond to the variance one would obtain when making infinitesimal SMD steps from $\xxstar$, \emph{i.e.}, the norm of the stochastic gradients at optimum in the geometry given by $\nabla^2 h(\xxstar)^{-1}$.

\subsection{Standard Assumptions}
\label{sec:other_var_def}
We now compare Definition~\ref{def:var} with several variance assumptions from the literature. Note that they typically ``only'' require the bounds to hold for all iterates over the trajectory. However, in the absence of proof that the iterates stay in certain regions of the space, suprema over the whole domain are required for all variance definitions. 

\textbf{Euclidean case.} Let us now take a step back and look at Eculidean case, $h = \frac{1}{2}\| \cdot\|^2$, and assume that $f$ is $L$-smooth. Writing Equation~\eqref{eq:SMD_opt_var} with this specific $h$ and replacing $\xxplusstar$ by a supremum, we obtain:
\begin{equation}
    \sigma^2_{\star, \eta} \leq \sup_{x \in C} \esp{\frac{1}{2}\|\nabla f(x) - \nabla f_\xi(x)\|^2},
\end{equation}
which is a common though debatable variance assumption. Indeed, it involves a maximum over the domain, and is in particular not bounded in general even for simple examples like Linear Regression. Yet, we can recover another standard variance assumption by assuming the smoothness of all $f_\xi$ (\emph{e.g.},~\citet{gower2019sgd}), which writes:
\begin{equation}
    \sigma^2_{\star, \eta} \leq \esp{\| \nabla f_\xi(\xxstar)\|^2}.
\end{equation}
This result is obtained by writing that $\| \nabla f_\xi(\xx)\|^2 \leq 2\|\nabla f_\xi(\xx) - \nabla f_\xi(\xxstar)\|^2 + 2 \| \nabla f_\xi(\xxstar)\|^2$, and bounding the first term using smoothness. In particular, we see that standard Euclidean variance definitions are natural bounds of $\vareta$. Detailed derivations can be found in Appendix~\ref{app:results_variance}.

\textbf{Divergence between stochastic and deterministic gradients.} An early variance definition for stochastic mirror descent in the relative setting comes from~\citet{hanzely2021fastest}, who define $\sigma^2_{\rm sym}$ as:
\begin{align}
    \sigma^2_{\rm sym} &= \frac{1}{\eta} \sup_{x\in C} \left\langle \nabla f(x) - \nabla f_\xi(x), x^+ - \xxplusbar\right\rangle\\
    &= \frac{1}{\eta^2}\sup_{x \in C} \left[D_h\left(x^+, \xxplusbar\right) + D_h\left(\xxplusbar, x^+\right)\right],
\end{align}
where we recall that $\xxplusbar$ is such that $\nabla h\left(\xxplusbar\right) = \nabla h(x) - \eta \nabla f(x)$. 
This quantity is sometimes called the symmetrized Bregman Divergence. 

We remark two main things when comparing $\sigma^2_{\rm sym}$ with Proposition~\ref{prop:var_div_sto_det}:
(i) $\vareta$ is not symmetrized, and contains only one of the two terms, and (ii) the bound only needs to hold at $\xxplusstar$ instead of for all $x \in C$. As a result, we directly obtain that $\vareta \leq \sigma^2_{\rm sym}$, and $\sigma^2_{\rm sym}$ is actually infinite in most cases, whereas $\vareta$ is usually finite, as seen above. 

\textbf{Stochastic gradients at optimum.}  \citet{dragomir2021fast} define the variance as:
\begin{align}
    \sigma^2_{DEH} &= \sup_{x \in C} \frac{1}{2\eta^2} \esp{D_{h^*}(\nabla h(\xx) - 2\eta \nabla f_\xi(\xxstar), \nabla h(\xx))}\\
    &= \sup_{x \in C} \esp{\|\nabla f_\xi(\xxstar)\|^2_{\nabla^2 h^*(z(x))}},
\end{align}
where $z(x) \in [\nabla h(x), \nabla h(x) - \eta \nabla f_\xi(\xxstar)]$
The main interest of this definition is that stochastic gradients are only taken at $\xxstar$. In particular, this variance is $0$ in case there is interpolation (all stochastic functions share a common minimum). However, this quantity can blow up if $h$ is not strongly convex, since in this case $\nabla^2 h^*$ is not upper bounded (indeed, smoothness of the conjugate is ensured by strong convexity of the primal function~\citep{kakade2009duality}). Following similar derivations, but \emph{after the supremum has been taken}, we arrive at: 
\begin{proposition} \label{prop:comparison_drago}  If $f$ is $L$-relatively-smooth w.r.t. $h$, then for $\eta < 1/(2L)$, the variance can be bounded as:
    \begin{equation}
    \vareta \leq \esp{\|\nabla f_\xi(\xxstar)\|^2_{\nabla^2 h^*(z_\eta)}},
\end{equation}
where $z_\eta \in [\nabla h(\xxplusstar), \nabla h(\xxplusstar) - \eta \nabla f_\xi(\xxstar)]$.
\end{proposition}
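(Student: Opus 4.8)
The plan is to mirror the derivation of~\citet{dragomir2021fast} that yields $\sigma^2_{DEH}$, but to use the resulting pointwise estimate only at $x = \xxplusstar$ rather than taking a supremum over $C$. Combining Definition~\ref{def:var} with the definition of $f_\eta$ gives the identity
\[
\vareta \;=\; \frac{1}{\eta}\bigl(f(\xxstar) - f(\xxplusstar)\bigr) \;+\; \frac{1}{\eta^2}\,\esp{D_h(\xxplusstar, \xxplusstar^+)},
\]
so it suffices to prove, for every $x \in C$, an inequality of the form $\esp{D_h(x, x^+)} \le \eta D_f(x, \xxstar) + \tfrac12\,\esp{D_{h^*}\bigl(\nabla h(x) - 2\eta \nabla f_\xi(\xxstar), \nabla h(x)\bigr)}$: indeed, since $\nabla f(\xxstar) = 0$ one has $f(x) - D_f(x,\xxstar) = f(\xxstar)$, so plugging this into $f_\eta(x) = f(x) - \tfrac1\eta \esp{D_h(x, x^+)}$ yields $\tfrac1\eta\bigl(f(\xxstar) - f_\eta(x)\bigr) \le \tfrac1{2\eta^2}\,\esp{D_{h^*}\bigl(\nabla h(x) - 2\eta \nabla f_\xi(\xxstar), \nabla h(x)\bigr)}$, and specializing at $x = \xxplusstar$ removes both the $f(\xxstar) - f(\xxplusstar)$ term and the $\eta D_f(\xxplusstar, \xxstar)$ term at once.

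To obtain that inequality I would start from $\esp{D_h(x, x^+)} = \esp{D_{h^*}\bigl(\nabla h(x) - \eta \nabla f_\xi(x), \nabla h(x)\bigr)}$ via the duality of the Bregman divergence, and split the first argument as the midpoint $\tfrac12\bigl(\nabla h(x) - 2\eta[\nabla f_\xi(x) - \nabla f_\xi(\xxstar)]\bigr) + \tfrac12\bigl(\nabla h(x) - 2\eta \nabla f_\xi(\xxstar)\bigr)$. Convexity of $D_{h^*}(\cdot, \nabla h(x))$ breaks $\esp{D_h(x,x^+)}$ into two halves; the second half is exactly the wanted term. The first half is $\tfrac12\,\esp{D_h(x, y_\xi)}$, where $y_\xi$ is the mirror step from $x$ with step-size $2\eta$ applied to $g_\xi := f_\xi - \langle \nabla f_\xi(\xxstar), \cdot\rangle$. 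This $g_\xi$ has the same Bregman divergence as $f_\xi$, inherits relative smoothness, and is minimized at $\xxstar \in \interior C$ because $\nabla g_\xi(\xxstar) = 0$; hence the computation in the proof of Proposition~\ref{prop:function_primal}, applied to $g_\xi$ with the step-size condition $2\eta \le 1/L$, gives $\tfrac1{2\eta} D_h(x, y_\xi) \le g_\xi(x) - g_\xi(y_\xi) \le g_\xi(x) - g_\xi(\xxstar)$. Taking expectations and using $\esp{\nabla f_\xi(\xxstar)} = \nabla f(\xxstar) = 0$ collapses the right-hand side to $f(x) - f(\xxstar) = D_f(x, \xxstar)$, so the first half is at most $\eta D_f(x, \xxstar)$, as needed. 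Finally, expanding $h^*$ by Taylor's theorem with integral (mean-value) remainder rewrites $D_{h^*}\bigl(\nabla h(\xxplusstar) - 2\eta \nabla f_\xi(\xxstar), \nabla h(\xxplusstar)\bigr)$ as $2\eta^2\,\|\nabla f_\xi(\xxstar)\|^2_{\nabla^2 h^*(z_\eta)}$ for some $z_\eta$ on the segment joining its two arguments, which after dividing by $2\eta^2$ gives the claimed bound (with $z_\eta$ between $\nabla h(\xxplusstar)$ and $\nabla h(\xxplusstar) - \eta \nabla f_\xi(\xxstar)$, the exact far endpoint of the segment being tracked through the split).

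The step I expect to be the main obstacle is controlling that first half of the split, i.e.\ showing $\tfrac12\,\esp{D_h(x, y_\xi)} \le \eta D_f(x,\xxstar)$. As emphasized in the text, since $\nabla f(\xxplusstar) \neq 0$ in general one cannot simply pretend the mirror step is taken from a minimizer, so the cancellation has to be engineered through the shifted functions $g_\xi$ and a doubling of the effective step-size --- which is precisely why the condition degrades from $\eta \le 1/L$ to $\eta < 1/(2L)$. The accompanying bookkeeping --- keeping the leading term's gradients anchored at $\xxstar$ while it is the gradient differences $\nabla f_\xi(x) - \nabla f_\xi(\xxstar)$ that get absorbed by relative smoothness, and verifying that $\xxstar$ is indeed the minimizer of each $g_\xi$ over $C$ --- is routine but must be done with care.
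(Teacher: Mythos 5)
Your proposal is correct and follows essentially the same route as the paper's proof: the midpoint split of $\nabla h(x) - \eta \nabla f_\xi(x)$ inside $D_{h^*}$, convexity of $D_{h^*}$ in its first argument, a cocoercivity-type bound at effective step-size $2\eta \leq 1/L$ (which you re-derive through the shifted functions $g_\xi$ instead of simply invoking Lemma~\ref{lemma:breg_coco}), specialization at $x = \xxplusstar$, and a mean-value/Taylor rewriting of the remaining $D_{h^*}$ term as a squared norm in the $\nabla^2 h^*(z_\eta)$ metric. The only substantive difference is that you cancel $f(\xxstar) - f(\xxplusstar) + D_f(\xxplusstar, \xxstar)$ by assuming $\nabla f(\xxstar) = 0$, whereas the paper only uses the first-order optimality condition $-\nabla f(\xxstar)^\top(\xxplusstar - \xxstar) \leq 0$, so its argument also covers a constrained minimizer on the boundary of $C$.
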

In particular, we obtain a finite bound without having to restrict the space. 

\textbf{Functions variance.} Another variance definition that appears in the SGD literature is of the form $f(\xxstar) - \esp{f_\xi(\xxstar^\xi)}$, using the optima of the stochastic functions~\citep{loizou2021stochastic}. Unfortunately, the results derived with this definition do not obtain a vanishing variance term when $\eta \rightarrow 0$, unlike most other variance definitions, and contrary to what is observed in practice, that smaller step-sizes reduce the variance. The vanishing variance term can be obtained by rescaling by $1/\eta$ (so considering $\left(f(\xxstar) - \esp{f_\xi(\xxstar^\xi)}\right) / \eta$ instead), but the problem is that this variance definition would explode for $\eta \rightarrow 0$. This is because using such a definition would come down to performing the supremum step within the expectation in~\eqref{eq:maml}, using that $f_\xi(\xxplus) \geq f_\xi (\xxstar^\xi)$, which is a very crude bound. Instead, Corolary~\ref{cor:comparison_function_diff} directly shows that our variance definition is tighter than this one, and in particular (i) it is bounded for all $\eta > 0$, (ii) it remains finite as $\eta \rightarrow 0$ even with the proper rescaling (Proposition~\ref{prop:limit_eta_0}).

\textbf{Relation to $c$-transform.} \citet{leger2023gradient} revisited the analysis of gradient descent, viewing it as an alternate minimization method on transforms of $f$. This point of view subsumes many methods, including the Newton Method or Mirror Descent. Central to their analysis is the notion of $c$-transform $f^c(y) = \sup_{x\in C} f(x) - c(x, y)$, a standard quantity from optimal transport~\citep{villani2009optimal}. It turns out that for $\eta \leq 1/L$, $f_\eta$ is actually linked to the $c$-transform as:
\begin{equation}
    f_\eta(x) = \esp{f_\xi^c(x^+)},
\end{equation}
where we use the cost $c(x,y) = \frac{1}{\eta} D_h(x, y)$. Since $f(\xxstar) = f^c(\xxstar) = \arg \min_{x \in C} f\left(\xxplusbar\right)$, denoting $\cT_c(g) = g^c(\nabla h^*(\nabla h(x) - \eta \nabla g(x)))$, we have that:
\begin{equation}
    \vareta \! =\! \frac{1}{\eta}\! \left(\min_{x \in C} \cT_c(\esp{f_\xi})(x) \!- \min_{x\in C} \esp{\cT_c(f_\xi)}(x)\! \right).
\end{equation}
This alternative way of looking at $\vareta$ suggests that it can also be expressed as a ``variance'', similarly to~\eqref{eq:var_fplusxi}, without resorting to the bound from Proposition~\ref{prop:function_primal}, and incorporating the notion of $c$-transform instead.

In this section, we have highlighted the connections with other definitions, and argued that $f_\eta$ (and its minimum) is a relevant quantity. In particular, Definition~\ref{def:var} is the only definition that allows boundedness of the variance notion both after a supremum step over the iterates (and without strong convexity of $h$) and in the $\eta \rightarrow 0$ limit with the proper rescaling.

\section{Convergence Analysis}
\label{sec:analysis}

Now that we have (extensively) investigated $\vareta$, and the various interpretations that come from different bounds, we are ready to state the convergence results. Some proofs in this section are just sketched, but complete derivations can be found in Appendix~\ref{app:convergence_results}.

\subsection{Relatively Strongly Convex setting.}

Recall that $f_\eta^\star = \inf_{x \in C} f_\eta(x)$. Starting from an arbitrary $\xx\at{0}$, the sequence $(\xx\at{k})_{k\geq 0}$ is built as $\xx\at{k+1} = (\xx\at{k})^+$ for $k \in \{0, T\}$ for some $T > 0$
\begin{theorem} \label{thm:rates_smd_sc}
     If $f$ is $\mu$-relatively-strongly-convex with respect to $h$, under a constant step-size $\eta$, the iterates obtained by SMD (Equation~\eqref{eq:smd_bis}) verify
    \begin{equation}
    \begin{split}
        \eta&\left[\esp{f_\eta(\xx\at{T})} - f_\eta^\star\right] +  \esp{D_h(\xxstar, \xx\at{T+1})} \leq \\
        &( 1 - \eta \mu)^{T+1} D_h(\xxstar, \xx\at{0}) + \frac{\eta \vareta}{\mu}.
    \end{split}
    \end{equation}
\end{theorem}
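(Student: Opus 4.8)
The natural approach is a one-step descent inequality followed by a recursion. The plan is to start from the standard three-point/cosine identity for mirror descent: for the update $\nabla h(\xx^+) = \nabla h(\xx) - \eta \nabla f_\xi(\xx)$ and any $u \in C$,
\begin{equation*}
\eta \nabla f_\xi(\xx)^\top(\xx^+ - u) = D_h(u,\xx) - D_h(u,\xx^+) - D_h(\xx^+,\xx).
\end{equation*}
Specializing $u = \xxstar$ and rearranging gives
\begin{equation*}
D_h(\xxstar,\xx^+) = D_h(\xxstar,\xx) - \eta \nabla f_\xi(\xx)^\top(\xx - \xxstar) - \eta \nabla f_\xi(\xx)^\top(\xx^+ - \xx) - D_h(\xx^+,\xx).
\end{equation*}
Then I would take expectation conditionally on $\xx$ (so $\esp{\nabla f_\xi(\xx)} = \nabla f(\xx)$), use relative strong convexity in the form $\nabla f(\xx)^\top(\xx - \xxstar) = D_f(\xx,\xxstar) + f(\xx) - f(\xxstar) \geq \mu D_h(\xx,\xxstar) + f(\xx) - f(\xxstar)$, and crucially recognize that $-\eta\,\esp{\nabla f_\xi(\xx)^\top(\xx^+-\xx)} - \esp{D_h(\xx^+,\xx)} = \esp{D_h(\xx,\xx^+)}$ exactly (this is the algebraic identity used inside the proof of Proposition \ref{prop:function_primal}, since $\nabla h(\xx^+)-\nabla h(\xx) = -\eta\nabla f_\xi(\xx)$). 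This converts the awkward $\xx^+$ terms into $\esp{D_h(\xx,\xx^+)}$, which is precisely $\eta(f(\xx) - f_\eta(\xx))$ by the definition of $f_\eta$. Collecting terms, the $f(\xx)$ contributions cancel and one obtains the clean one-step bound
\begin{equation*}
\esp{D_h(\xxstar,\xx^+)} \leq (1-\eta\mu) D_h(\xxstar,\xx) - \eta\left(f_\eta(\xx) - f(\xxstar)\right).
\end{equation*}

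Next I would handle the residual term $-\eta(f_\eta(\xx) - f(\xxstar))$. Writing $f_\eta(\xx) - f(\xxstar) = (f_\eta(\xx) - f_\eta^\star) + (f_\eta^\star - f^\star)$ and using $f^\star - f_\eta^\star = \eta\vareta$ from Definition \ref{def:var}, this splits the residual into $-\eta(f_\eta(\xx) - f_\eta^\star) + \eta^2 \vareta$. So the one-step inequality becomes
\begin{equation*}
\eta\left(\esp{f_\eta(\xx)} - f_\eta^\star\right) + \esp{D_h(\xxstar,\xx^+)} \leq (1-\eta\mu) D_h(\xxstar,\xx) + \eta^2 \vareta,
\end{equation*}
after also taking the outer expectation over $\xx$ (here I'd note $f_\eta(\xx)\geq f_\eta^\star$ so the first term is nonnegative, matching the theorem's left-hand side). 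Applying this at step $k$ with $\xx = \xx\at{k}$, $\xx^+ = \xx\at{k+1}$, dropping the nonnegative $\esp{f_\eta(\xx\at{k})}-f_\eta^\star$ term for intermediate $k$, and unrolling the geometric recursion on $\esp{D_h(\xxstar,\xx\at{k})}$ for $k=0,\dots,T$ yields $\esp{D_h(\xxstar,\xx\at{T+1})} \leq (1-\eta\mu)^{T+1} D_h(\xxstar,\xx\at{0}) + \eta^2\vareta \sum_{j\geq 0}(1-\eta\mu)^j \leq (1-\eta\mu)^{T+1} D_h(\xxstar,\xx\at{0}) + \eta\vareta/\mu$; keeping the function-value term at the final step $T$ gives the stated inequality. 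One needs $\eta\mu < 1$ (or $\leq 1$) for the geometric sum, which is implicit in the step-size regime.

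The main obstacle — or rather the one subtle point — is the precise bookkeeping that makes the $f(\xx)$ terms cancel: the $+\eta f(\xx)$ coming from relative strong convexity must exactly annihilate the $-\eta f(\xx)$ hidden inside $\esp{D_h(\xx,\xx^+)} = \eta f(\xx) - \eta f_\eta(\xx)$. This is what forces the appearance of $f_\eta$ rather than $f$ in the final bound and is the whole point of Definition \ref{def:var}; getting the signs and the conditional-expectation ordering right is where care is needed. A secondary technical point is that $\xxplusstar$ (the supremum point in $\vareta$) need not be attained, but the theorem only uses $f_\eta^\star$ and $f^\star$ as infima, so no issue arises — the bound $f^\star - f_\eta^\star = \eta\vareta$ is used as an inequality $f_\eta(\xx) \geq f^\star - \eta\vareta = f(\xxstar) - \eta\vareta$ directly, sidestepping any attainment question.
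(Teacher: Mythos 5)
Your proposal is correct and follows essentially the same route as the paper: your expansion via the three-point identity re-derives the paper's Lemma~\ref{lemma:main_eq}, after which you apply relative strong convexity, split $f_\eta(\xx) - f(\xxstar)$ using $f(\xxstar) - f_\eta^\star = \eta\vareta$, and chain the resulting one-step inequality exactly as in Equation~\eqref{eq:main_eq_thm_sc}. The only slip is notational: the identity should read $\nabla f(\xx)^\top(\xx - \xxstar) = D_f(\xxstar, \xx) + f(\xx) - f(\xxstar)$, so the strong-convexity step uses $D_f(\xxstar,\xx) \geq \mu D_h(\xxstar,\xx)$ (as in the paper), which is indeed what your final bound $(1-\eta\mu)D_h(\xxstar,\xx)$ implicitly relies on.
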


Note that the (relatively) strongly-convex theorem has a standard form, and recovers usual MD results if we remove the variance, and standard SGD results if we take $h = \frac{1}{2}\| \cdot\|^2$. Let us now proceed to the proof of Theorem~\ref{thm:rates_smd_sc}.

\begin{proof}
    We start from a standard equality, which is a variation of \emph{e.g.}, \citet[Lemma 4]{dragomir2021fast}:
    \begin{align}
        &\esp{D_h(\xxstar, \xxplus)} - D_h(\xxstar, \xx) + \eta D_f(\xxstar, \xx) \\
        &= - \eta [f(\xx) - f(\xxstar)] + \esp{D_h(\xx, \xxplus)}\\
        &= \eta \left[f(\xxstar) - \left(f(\xx) - \frac{1}{\eta}\esp{D_h(\xx, \xxplus)}\right)\right]\\
        &= \eta \left[f(\xxstar) - f_{\eta}(\xx)\right]\label{eq:proof_sc1}\\
        &= - \eta \left[f_{\eta}(\xx) - f_{\eta}^\star\right] + \eta \left[f(\xxstar) - f_{\eta}^\star\right]. \label{eq:proof_telescopic}
    \end{align}
    Using that $D_f(\xxstar, \xx) \geq \mu D_h(\xxstar, \xx)$ (relative strong convexity), and remarking that $f(\xxstar) - f_{\eta}^\star = \eta \vareta$, we obtain: 
    \begin{equation}
    \label{eq:main_eq_thm_sc}
    \begin{split}
        \eta\left[f_\eta(\xx) - f_\eta^\star\right] &+  \esp{D_h(\xxstar, \xxplus)} \leq \\
        &( 1 - \eta \mu) D_h(\xxstar, \xx) + \eta^2 \vareta.
    \end{split}
    \end{equation}
    At this point, we can neglect the $\eta\left[f_\eta(\xx) - f_\eta^\star\right] \geq 0$ terms and chain the inequalities for $\xx = \xx\at{t}$ for $t$ from $0$ to $T$ to obtain the result.
\end{proof}

This proof is quite simple, and naturally follows from Lemma~\ref{lemma:main_eq}. One can also note that \emph{relative smoothness of $f$ is not required to obtain Theorem~\ref{thm:rates_smd_sc}}, which has no condition on the step-size. This is not a typo, but reflects the fact that \emph{step-size conditions are needed to obtain a bounded variance}. Indeed, the variance as defined here entangles aspects tied with the error due to discretization (which is usually dealt with using smoothness), and the error due to stochasticity. This is natural, as the stochastic noise vanishes in the continuous limit ($\eta \rightarrow 0$). Besides, the magnitude of the updates depends both on where the stochastic gradient is applied and on the step-size.
Yet, the simplicity of the proof is partly due to this entanglement, meaning that we have deferred some of the complexity to the bounding of the variance term. 

Also note that Theorem~\ref{thm:rates_smd_sc} uses constant step-sizes, but Equation~\eqref{eq:main_eq_thm_sc} can be used with time-varying step-sizes, as is done for instance in the proof of Theorem~\ref{thm:map_estimator}.

A variant of Theorem~\ref{thm:rates_smd_sc} in which the discretization error is partly removed from the notion of variance writes: 
\begin{corollary}
     \label{thm:rates_smd_sc_fxplus}
     If $f$ is $\mu$-strongly-convex and $L$-relatively-smooth with respect to $h$, if $\eta \leq 1/L$, the iterates obtained by SMD (Equation~\eqref{eq:smd_bis}) with constant step-size $\eta$ verify
    \begin{equation}
    \begin{split}
        \eta\!&\left[\esp{f_\xi((\xx\at{T})^+)} \!- f_+^\star\!\right] +  \esp{D_h(\xxstar, \xx\at{T+1})} \leq \\
        &( 1 - \eta \mu)^{T+1} D_h(\xxstar, \xx\at{0}) + \frac{\eta}{\mu} \left[\frac{f(\xxstar) - f_+^\star}{\eta}\right],
    \end{split}
    \end{equation}
    where $f_+^\star = \inf_{x\in C}\esp{f_\xi(\xxplus)}$.
\end{corollary}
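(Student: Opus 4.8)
The plan is to piggyback on the proof of Theorem~\ref{thm:rates_smd_sc}, changing only the quantity that plays the role of $f_\eta$. Recall from Proposition~\ref{prop:function_primal} that, under $L$-relative smoothness of $f_\xi$ and $\eta\leq 1/L$, one has $\tfrac1\eta D_h(\xx,\xxplus)\leq f_\xi(\xx)-f_\xi(\xxplus)$ pointwise (the displayed computation in that proof is in fact an identity minus a nonnegative term, so it holds before taking expectations). Taking expectations gives $\tfrac1\eta\esp{D_h(\xx,\xxplus)}\leq f(\xx)-\esp{f_\xi(\xxplus)}$, i.e. $f_\eta(\xx)\geq \esp{f_\xi(\xxplus)}$ for every $\xx\in C$. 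This is the only extra ingredient needed.

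The key steps, in order, are as follows. First, reuse the chain of equalities \eqref{eq:proof_sc1}--\eqref{eq:proof_telescopic} verbatim: for every $\xx$,
\begin{equation*}
\esp{D_h(\xxstar,\xxplus)} - D_h(\xxstar,\xx) + \eta D_f(\xxstar,\xx) = \eta\left[f(\xxstar) - f_\eta(\xx)\right].
\end{equation*}
Second, drop $D_f(\xxstar,\xx)$ by using $\mu$-(relative-)strong convexity, $D_f(\xxstar,\xx)\geq \mu D_h(\xxstar,\xx)$, exactly as in the theorem. Third, instead of writing $f(\xxstar)-f_\eta(\xx) = -[f_\eta(\xx)-f_\eta^\star] + [f(\xxstar)-f_\eta^\star]$, use the bound $f_\eta(\xx)\geq \esp{f_\xi(\xxplus)}$ from the previous paragraph to get
\begin{equation*}
f(\xxstar) - f_\eta(\xx) \leq f(\xxstar) - \esp{f_\xi(\xxplus)} = -\left[\esp{f_\xi(\xxplus)} - f_+^\star\right] + \left[f(\xxstar) - f_+^\star\right],
\end{equation*}
where $f_+^\star = \inf_{x\in C}\esp{f_\xi(\xxplus)}$ and $\esp{f_\xi(\xxplus)}-f_+^\star\geq 0$. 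Combining these three steps yields, for $\xx = \xx\at{t}$,
\begin{equation*}
\eta\left[\esp{f_\xi((\xx\at{t})^+)} - f_+^\star\right] + \esp{D_h(\xxstar,\xx\at{t+1})} \leq (1-\eta\mu)\,\esp{D_h(\xxstar,\xx\at{t})} + \eta\left[f(\xxstar) - f_+^\star\right].
\end{equation*}
Fourth, discard the nonnegative term $\eta[\esp{f_\xi((\xx\at{t})^+)} - f_+^\star]$ for $t<T$, iterate the resulting contraction on $\esp{D_h(\xxstar,\xx\at{t})}$ from $t=0$ to $T$, and sum the geometric series $\sum_{t\geq 0}(1-\eta\mu)^t \leq 1/(\eta\mu)$ for the variance term. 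Rewriting $f(\xxstar)-f_+^\star = \eta\cdot\tfrac{f(\xxstar)-f_+^\star}{\eta}$ to match the stated form gives exactly the claimed inequality.

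There is essentially no obstacle here: the only subtlety is making sure the inequality $f_\eta(\xx)\geq \esp{f_\xi(\xxplus)}$ is available, which requires the step-size condition $\eta\leq 1/L$ and relative smoothness of the $f_\xi$ (hence these appear in the hypotheses, unlike in Theorem~\ref{thm:rates_smd_sc}), and observing that it holds for \emph{all} $\xx\in C$ so that it can be applied at each iterate $\xx\at{t}$ without needing the ``supremum realized at $\xxplusstar$'' assumption. The rest is a verbatim replay of the previous proof with $\esp{f_\xi(\xxplus)}$ in place of $f_\eta(\xx)$ and $f_+^\star$ in place of $f_\eta^\star$. One should also note $f_+^\star \geq f_\eta^\star$ so the bracketed term $\tfrac{f(\xxstar)-f_+^\star}{\eta}$ is smaller than $\vareta$; this is the sense in which ``the discretization error is partly removed'' — worth a one-line remark after the statement but not part of the proof proper.
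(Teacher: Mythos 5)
Your proof is correct and follows essentially the same route as the paper's: start from the identity behind Equation~\eqref{eq:proof_sc1}, insert the pointwise bound $f_\eta(\xx)\geq \esp{f_\xi(\xxplus)}$ from the proof of Proposition~\ref{prop:function_primal} (which is where $\eta\leq 1/L$ and relative smoothness enter), recenter around $f_+^\star$, apply relative strong convexity, and chain the contraction with the geometric-series bound exactly as in Theorem~\ref{thm:rates_smd_sc}. No gaps; your extra remarks (validity for all $\xx\in C$, $f_+^\star\geq f_\eta^\star$) are consistent with the paper's discussion.
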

This alternate version is obtained by the key result in the proof of Proposition~\ref{prop:function_primal}, \emph{i.e.}, $f_\eta(x) \geq \esp{f_\xi(\xxplus)}$, in line~\eqref{eq:proof_sc1}. Note that in the deterministic case, $f_+^\star = f(\xxstar)$, and we recover standard results.

\subsection{Convex setting.}
Let us now consider the convex case, meaning that $\mu = 0$.
\begin{theorem} \label{thm:smd_convex_main}
    If $f$ is convex, the iterates obtained by SMD using a constant step-size $\eta > 0$ verify 
    \begin{align*}
        \frac{1}{T +1} \sum_{k=0}^T &\esp{f_{\eta}(\xx\at{k}) - f_\eta^\star + D_f(\xxstar, \xx\at{k})} \\
        &\leq \frac{D_h(\xxstar, \xx\at{0})}{\eta (T+1)} + \eta \sigma_{\star, \eta}^2.
    \end{align*}
\end{theorem}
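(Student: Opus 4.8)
## Proof Plan

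The plan is to reuse the key identity from the proof of Theorem~\ref{thm:rates_smd_sc}, specialized to $\mu = 0$, and then telescope. Starting from Equation~\eqref{eq:proof_telescopic} with $\mu = 0$ (so that the relative strong convexity step is vacuous and $D_h(\xxstar, \xx)$ is kept with coefficient $1$), and recalling that $f(\xxstar) - f_\eta^\star = \eta \vareta$, I would write, for $\xx = \xx\at{k}$:
\begin{align*}
    \eta\left[f_\eta(\xx\at{k}) - f_\eta^\star + D_f(\xxstar, \xx\at{k})\right] + \esp{D_h(\xxstar, \xx\at{k+1})} \leq D_h(\xxstar, \xx\at{k}) + \eta^2 \vareta.
\end{align*}
Here I have kept the $\eta D_f(\xxstar, \xx\at{k}) \geq 0$ term on the left rather than discarding it (in the strongly convex proof it was converted into $\mu D_h$ and absorbed; in the convex case we simply retain it as part of the averaged quantity we want to bound).

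Next I would take total expectation, sum this inequality over $k = 0, \dots, T$, and use the telescoping of the Bregman terms: $\sum_{k=0}^T \left(\esp{D_h(\xxstar, \xx\at{k+1})} - \esp{D_h(\xxstar, \xx\at{k})}\right) = \esp{D_h(\xxstar, \xx\at{T+1})} - D_h(\xxstar, \xx\at{0}) \geq -D_h(\xxstar, \xx\at{0})$ since the final Bregman term is nonnegative. This leaves
\begin{align*}
    \eta \sum_{k=0}^T \esp{f_\eta(\xx\at{k}) - f_\eta^\star + D_f(\xxstar, \xx\at{k})} \leq D_h(\xxstar, \xx\at{0}) + (T+1)\eta^2 \vareta.
\end{align*}
Dividing by $\eta(T+1)$ gives exactly the claimed bound.

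The routine parts are the telescoping and the bookkeeping of expectations (with the implicit conditioning on the filtration up to step $k$ before $\xi$ is drawn, as in the earlier proofs). The one point requiring a little care is the very first line: the identity~\eqref{eq:proof_telescopic} is stated for a single SMD step from a deterministic $\xx$, so when chaining it along the stochastic trajectory one must apply it conditionally on $\xx\at{k}$ and then take expectations — this is precisely the same maneuver already used (implicitly) in Theorem~\ref{thm:rates_smd_sc}, so no new obstacle arises. I do not expect a genuinely hard step here; the main subtlety, as the authors themselves flag, is that boundedness/finiteness of $\vareta$ (and hence the usefulness of the bound) relies on the step-size conditions and regularity assumptions discussed in Section~\ref{sec:var}, but the inequality itself holds unconditionally in $\eta > 0$.
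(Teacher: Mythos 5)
Your proposal is correct and follows essentially the same route as the paper's own proof: start from the identity behind Equation~\eqref{eq:proof_telescopic} applied at $\xx = \xx\at{k}$, keep the $D_f(\xxstar,\xx\at{k})$ term on the left, use $f(\xxstar) - f_\eta^\star = \eta\vareta$, telescope the Bregman terms, and divide. If anything, your bookkeeping of the $(T+1)$ factor and the conditional expectations is slightly more careful than the appendix version, but there is no substantive difference.
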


This theorem is obtained by summing Equation~\eqref{eq:proof_telescopic} for $\xx = \xx\at{k}$ for all $k \in \{1, \dots, T\}$ and rearranging the terms. Note that we choose a constant step-size for simplicity, but varying step-size results can be obtained in the same way. 

This case differs from standard convex analyses, in that we obtain a control on $f_{\eta}(\xx\at{k}) - f_\eta^\star + D_f(\xxstar, \xx\at{k})$ instead of the usual $f(\xx\at{k}) - f(\xxstar)$. One of the main consequences is that we cannot get a control on the average iterate since Bregman divergences are in general not convex in their second argument, and $f_\eta$ is not necessarily convex. This non-standard result is a direct consequence of our choice of variance definition, but it is actually a quantity that naturally arises in the analysis. Note that a variant involving $f_+^\star$ can be obtained in the same lines as Corollary~\ref{thm:rates_smd_sc_fxplus}.

\textbf{Controlling $f_\eta$.} The results in this section do not directly control the function gap $f(x) - f^*$, but rather the transformed one $f_\eta(x) - f_\eta^\star$. Yet, the continuity result (in $\eta$) from Proposition~\ref{prop:continuity_feta} shows that the bounds we provide can still be interpreted as relevant function values for small $\eta$.

\textbf{Controlling $D_f(\xxstar, \xx\at{k})$.} An interesting property of $D_f(\xxstar, \xx\at{k})$ is that it can be linked with the size of the gradients of $f$, as shown by the following result.

\begin{proposition}\label{prop:df_xstar_x}
If $\nabla f(\xxstar) = 0$ then for all $x \neq x_\star$,
\begin{equation*}
    D_f(\xxstar, \xx) \geq L D_{h^*}\! \!\left(\nabla h(\xxstar)\! + \frac{\nabla f(\xx)}{L}, \nabla h(\xxstar)\!\right) > 0.
\end{equation*}
\end{proposition}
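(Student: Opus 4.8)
The plan is to rewrite $D_f(\xxstar, \xx)$ in the dual picture and then use relative smoothness to lower-bound it by a Bregman divergence of $h^*$. First I would recall that $D_f(\xxstar,\xx) = f(\xxstar) - f(\xx) - \nabla f(\xx)^\top(\xxstar - \xx)$. The key idea is to compare the point $\xx$ with the auxiliary point obtained by taking a single relative-smoothness ``mirror step'' on $f$ from $\xx$: define $\bar z$ by $\nabla h(\bar z) = \nabla h(\xx) - \tfrac1L \nabla f(\xx)$ (i.e. the deterministic MD update with step $1/L$). By the standard descent-lemma argument under $L$-relative smoothness, $f(\bar z) \leq f(\xx) - \tfrac1L\,[\text{something}]$; more precisely one gets the inequality $f(\xxstar) \geq f(\bar z) \geq f(\xx) + \nabla f(\xx)^\top(\bar z - \xx) + \tfrac1L D_{h^*}(\nabla h(\bar z),\nabla h(\xx))$-type bound, where the first inequality uses $\nabla f(\xxstar)=0$ and convexity (so $\xxstar$ is a global minimizer, hence $f(\xxstar)\le f(\bar z)$). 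Tracking which direction each inequality goes is the delicate bookkeeping step.

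Concretely, I would start from the three-point / descent computation already used in the proof of Proposition~\ref{prop:function_primal}: with $w = \nabla h(\xx) - \tfrac1L\nabla f(\xx)$ and $\bar z = \nabla h^*(w)$,
\[
  D_h(\xx,\bar z) = -\tfrac1L \nabla f(\xx)^\top(\bar z - \xx) - D_h(\bar z,\xx)
  = \tfrac1L\bigl(D_f(\bar z,\xx) - f(\bar z) + f(\xx)\bigr) - D_h(\bar z,\xx),
\]
and relative smoothness $D_f(\bar z,\xx)\le L D_h(\bar z,\xx)$ gives $f(\xx) - f(\bar z) \geq \tfrac1L D_h(\xx,\bar z)$. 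Since $\nabla f(\xxstar)=0$ and $f$ is convex, $f(\xxstar)\le f(\bar z)$, hence $f(\xx) - f(\xxstar) \geq \tfrac1L D_h(\xx,\bar z) = \tfrac1L D_{h^*}(\nabla h(\bar z), \nabla h(\xx))$ by the Bregman duality lemma. Then I would add and subtract to turn $f(\xx)-f(\xxstar)$ into $D_f(\xxstar,\xx)$: since $D_f(\xxstar,\xx) = f(\xxstar) - f(\xx) - \nabla f(\xx)^\top(\xxstar-\xx)$, this is not literally $f(\xx)-f(\xxstar)$, so the cleaner route is to redo the three-point identity comparing $\xxstar$ directly with $\xx$ and the point $\xxstar' $ defined via $\nabla h(\xxstar') = \nabla h(\xxstar) + \tfrac1L\nabla f(\xx)$ — i.e. translate everything so the base point in $h^*$ is $\nabla h(\xxstar)$ as in the statement. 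Using $\nabla h(\xxstar) = \nabla h^*{}^{-1}(\xxstar)$-type identities and the duality lemma, $D_{h^*}(\nabla h(\xxstar) + \nabla f(\xx)/L, \nabla h(\xxstar))$ equals $D_h$ of the corresponding primal points, and a relative-smoothness upper bound on that $D_h$ term by $\tfrac1L D_f(\xxstar,\xx)$ yields exactly the claimed inequality.

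The strict positivity $D_f(\xxstar,\xx)>0$ for $x\neq\xxstar$ then follows because $h$ is strictly convex on $C$ (Assumption~\ref{ass:blanket}), so $D_{h^*}(\cdot,\cdot)$ is strictly positive whenever its two arguments differ, and $\nabla h(\xxstar) + \nabla f(\xx)/L = \nabla h(\xxstar)$ would force $\nabla f(\xx) = 0 = \nabla f(\xxstar)$, whence $\nabla h(\xx) = \nabla h(\xxstar)$ by... wait, that needs $f$ to actually pin down $x$; more simply, if $D_f(\xxstar,\xx)=0$ then the lower bound forces $\nabla f(\xx)=0$, and combined with convexity $x$ is also a minimizer, and if the minimizer is unique (which we may invoke, or argue via strict convexity of $h$ and relative strong convexity when $\mu>0$; in the general convex case one concludes $\nabla f(\xx)=\nabla f(\xxstar)=0$ and the stated strict inequality should be read on the set where $\nabla f(\xx)\ne 0$). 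I expect the main obstacle to be precisely this last point — getting a clean strict inequality in the merely-convex case — together with the sign/direction bookkeeping in the three-point identity; the relative-smoothness estimate and the Bregman duality lemma do all the real work.
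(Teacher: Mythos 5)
Your second route is exactly the paper's proof: the paper simply invokes the Bregman cocoercivity lemma (Lemma~\ref{lemma:breg_coco}) with the roles of $x$ and $\xxstar$ swapped and $\eta = 1/L$, together with the duality lemma, which is precisely what your three-point computation anchored at $\xxstar$ (with $\nabla h(\xxstar') = \nabla h(\xxstar) + \nabla f(x)/L$, using relative smoothness only through $D_f(\xxstar',\xxstar) \le L D_h(\xxstar',\xxstar)$) re-derives. Your hesitation about strict positivity in the merely convex case is not a gap relative to the paper either: its proof likewise only shows, via strict convexity of $h$, that equality would force $\nabla f(x) = 0$.
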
 
This is a Bregman equivalent of controlling the gradient squared norm, with the additional benefit that the reference point at which we apply the gradient is the optimum $\xxstar$. Besides, Proposition~\ref{prop:df_xstar_x} shows that $D_f(x_\star, x) > 0$ for $x \neq x_\star$ without requiring $f$ to be strictly convex (only $h$).

\textbf{Minimal assumptions on $h$.} Note that the theorems in this section do not actually require $h$ to satisfy Assumption~\ref{ass:blanket}, but only that iterations can be written in the form of Equation~\ref{eq:smd_bis} (which is guaranteed by Assumption~\ref{ass:blanket}). While Assumption~\ref{ass:blanket} allows for instance to use the Bregman cocoercivity lemma with any points, or ensures that $\nabla^2 h$ is well-defined, which we leverage extensively in Section~\ref{sec:var}, our theorems are much more general than this, and include applications such as proximal gradient mirror descent (next remark) or the MAP for Gaussian Parameters Estimation (next section).  

\textbf{Stochastic Mirror Descent with a Proximal term.} We briefly discuss a generalization of the mirror descent iterations, which writes: 
\begin{equation} \label{eq:MD_prox}
    x^+ = \arg \min_{u \in C} \eta \left[\nabla f_{\xi}(x)^\top \! u + g(u)\right] + D_h(u, x),
\end{equation}
where $g$ is a proper lower semi-continuous convex function. This is a ``proximal'' version, which for instance corresponds to projected stochastic mirror descent if $g$ is the indicator of a convex set. Let $\omega \in \partial g(\xxplus)$ be such that $\nabla h(x^+) = \nabla h(x) - \eta \left[\nabla f_\xi(x) + \omega\right]$, then this iteration can be rewritten as $\nabla h(x^+) + \eta \omega = \nabla h(x) + \eta \omega_x - \eta \left[\nabla f_\xi(x) + \omega_x\right]$ for any $\omega_x \in \partial g(x)$. In particular,~\eqref{eq:MD_prox} can be interpreted as a Stochastic mirror descent step with objective $f_\xi + g$ and mirror $h + \eta g$. While the mirror does not satisfy Assumption~\ref{ass:blanket} (and in particular twice differentiability in case $g$ is the indicator of a set), the iterations can still be written in the form of Equation~\eqref{eq:smd_bis}. In particular, the theorems from Section~\ref{sec:analysis} still apply, with the adapted variance definition involving function $f+g$ and mirror $h + \eta g$.  Similarly, $f + g$ is $1/\eta$ relatively-smooth with respect to $h + \eta g$ as long as $f$ is $L$-relatively-smooth with respect to $h$ and $\eta \leq 1/L$. More details can be found in Appendix~\ref{app:proj}.

\section{MAP For Gaussian Parameters Estimation.}
\label{sec:gaussian_mle_map}

So far, we have proposed new variance definitions for the analysis of stochastic mirror descent, and we have shown that they compare favorably to existing ones, while leading to simple convergence proofs. In this section, we investigate the open problem formulated by~\citet{priol2021convergence}, which is to find non-asymptotic convergence guarantees for the KL-divergence of the Maximum A Posteriori (MAP) estimator. In particular, this example will highlight the relevance of the infimum step on $f_\eta$, since it gives the first generic analysis that obtains meaningful finite time convergence rates.

\subsection{MAP and MLE of exponential families.}

In this section, we rapidly review the formalism of exponential family. More details can be found in~\citet{priol2021convergence}, and~\citet[Chapter 3]{wainwright2008graphical}. Let $X$ be a random variable, and $T$ a deterministic function, then the density of an exponential family for a sample $x$ writes: 
\begin{equation}
    p_\theta(x) = p(x|\theta) = \exp(\langle \theta, T(x)\rangle - A(\theta)),
\end{equation}
where $A$ is often refered to as the log-partition function. In this case, $\theta$ is called the natural parameter, and $T$ is the sufficient statistic. Function $A$ is convex, and we can thus establish a form of duality through convex conjugacy. The \emph{entropy} thus writes:
\begin{equation}
    A^*(\mu) = \max_{\theta^\prime \in \Theta} \langle \mu, \theta^\prime\rangle - A(\theta^\prime).
\end{equation}
Parameter $\mu$ is called the \emph{mean} parameter, and the standard MAP estimator can be derived for $n_0 \in \mathbb{N}$, $\mu_0 \in \R$ as:
\begin{align}
    \mu\at{n}_{\rm MAP} = \frac{n_0 \mu\at{0} + \sum_{i=1}^n T(X_i)}{n_0 + n}
\end{align}
The Maximum Likelihood Estimator (MLE) corresponds to taking $n_0 = 0$. An interesting observation is that $\mu\at{n}_{\rm MAP}$ can be obtained recursively for $n>0$, as 
\begin{align}
    &\mu\at{0}_{\rm MAP} = \mu\at{0}, \ \eta_n = (n + n_0)^{-1},\\ 
    &\mu\at{n+1}_{\rm MAP} = \mu\at{n}_{\rm MAP} - \eta_n \nabla g_{X_n}(\mu\at{n}_{\rm MAP})\label{eq:MAP_dual},
\end{align}
with $\nabla g_{X_n}(\mu) = \mu - T(X_n)$. In terms of primal variable $\theta\at{n} = \nabla A^*(\mu\at{n}_{\rm MAP})$,~\eqref{eq:MAP_dual} writes: 
\begin{equation} \label{eq:iter_map}
    \nabla A (\theta\at{n+1}) = \nabla A(\theta\at{n}) - \eta \nabla f_{X_n}(\theta\at{n}),
\end{equation}
where $f_{X_n}(\theta) = A(\theta) - \langle \theta, T(X_n)\rangle$, so that $f(\theta) = A(\theta) - \langle \theta, \mu_\star\rangle$. We recognize stochastic mirror descent iterations, with mirror $A$ and stochastic gradients $\nabla f_X$. Similar results on the MLE can be obtained by taking $n_0 = 0$. This key observation implies that \textbf{convergence guarantees on the MAP and the MLE can be deduced from stochastic mirror descent convergence guarantees}.

While this seems to be a very appealing way to obtain convergence guarantees for the MAP estimator, \citet{priol2021convergence} observe that none of the existing convergence rates for stochastic mirror descent allow to obtain meaningful rates for the convergence of the MAP for general exponential families. In particular, none of them recover the $O(1/n)$ asymptotic convergence rate for estimating a Gaussian with unknown mean and covariance.

This is due to the variance definitions used in the existing analyses, that all have issues (not uniformly bounded over the domain, not decreasing with the step-size...) as discussed in Section~\ref{sec:var}. Our analysis fixes this problem, and thus yields finite-time guarantees for the MAP and MLE estimators for the estimation of a Gaussian with unknown mean and covariance. This shows the relevance of Assumption~\ref{def:var}. 

\subsection{Full Gaussian (unknown mean and covariance)}

The main problem studied in~\citet{priol2021convergence} is that of the one-dimensional full-Gaussian case, where the goal is to estimate the mean and covariance of a Gaussian from i.i.d. samples $X_1, \dots, X_n \sim \cN(\mstar, \Sigma_\star)$, with $\Sigma_\star > 0$. Note that although notation $\Sigma$ is usually reserved for the covariance matrix of a multivariate Gaussian, we use it for a scalar value here to highlight the distinction with $\vareta$, the variance from stochastic mirror descent. In this case, the sufficient statistics write $T(X) = (\XX, \XX^2)$, and the log-partition and entropy functions are, up to constants: 
\begin{align*}
    A(\theta) = \frac{\theta_1^2}{- 4\theta_2} - \frac{1}{2}\log(-\theta_2), \ A^*(\mu) = - \frac{1}{2}\log(\mu_2 - \mu_1^2),
\end{align*}
for $\theta \in \Theta = \R \times \R_-^*$ and $\mu \in \{(u,v), u^2 < v\}$. The goal is to estimate $D_A(\theta, \theta_\star)$, for which~\citet{priol2021convergence} show that only partial solutions exist: results are either asymptotic, or rely on the objective being (approximately) quadratic. Note that there is a relationship between natural parameters, mean parameters, and $(\m, \var)$, the mean and covariance of the Gaussian we would like to estimate. In the following, we will often abuse notations, and write for instance $D_A(\tilde{\theta}, \theta)$ in terms of $(\m, \var)$ and $(\mtilde, \vartilde)$ rather than $\theta$ and $\tilde{\theta}$. More specifically:
\begin{align*}
    D_A(\tilde{\theta}, \theta) = -\frac{1}{2}\log\left(\frac{\var}{\vartilde}\right) - \frac{\vartilde - \var}{2 \vartilde} + \frac{(\mtilde - \m)^2}{2\vartilde}.
\end{align*}
The update formulas for the parameters are given by:
\begin{align}
    &\m^+ = (1 - \eta) \m + \eta \XX,\label{eq:update_m}\\
    &(\var)^+ = (1 - \eta)\left[\var + \eta (\m - \XX)^2 \right].\label{eq:update_var}
\end{align}
Note that in our case, $\nabla^2 A = \nabla^2 f_\xi$ for all $\xi$, and so in particular $D_A = D_{f_\xi} = D_f$. Therefore, we obtain that:
\begin{align}\label{eq:feta}
    f_\eta(\theta) - f(\theta_\star) &= \frac{1}{2\eta} \esp{\log\left((1 - \eta)\left(1 + \eta\frac{(\m -\XX)^2}{\var}\right)\right)} \nonumber\\
    &- \frac{1}{2}\log\left(\frac{\var_\star}{\var}\right).
\end{align}
At this point, the main question is: \emph{What can we lower bound $f_\eta$ by?} We start by showing the following proposition:
\begin{proposition}
    The iterations~\eqref{eq:iter_map} are well-defined for $\eta < 1$ in the sense that if $\theta\at{n} \in \Theta = \R \times \R_-^*$, then $\nabla A (\theta\at{n}) - \eta \nabla f_{X_n}(\theta\at{n}) \in {\rm Range}(\nabla A)$ almost surely, so that $\theta\at{n+1} \in \Theta$ is well-defined almost surely. Besides, $f_\xi$ is $1$-relatively-smooth and $1$-relatively-strongly-convex with respect to $A$.    
\end{proposition}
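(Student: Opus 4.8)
The statement has two independent parts, and I would treat them separately. For the \emph{relative smoothness/strong convexity} part, the key observation is already flagged in the text: here $\nabla^2 f_\xi = \nabla^2 A$ for every $\xi$, because $f_{X_n}(\theta) = A(\theta) - \langle \theta, T(X_n)\rangle$ differs from $A$ only by a linear term, which has zero Hessian. Since $D_g$ depends on $g$ only through its Hessian (it is the second-order Taylor remainder), this gives $D_{f_\xi}(\theta, \theta') = D_A(\theta, \theta')$ pointwise for all $\theta, \theta' \in \Theta$. Plugging $\mu = L = 1$ and $h = A$ into Definition~\ref{def:rel_smoothness_sc} then yields both bounds $1\cdot D_A \le D_{f_\xi} \le 1 \cdot D_A$ trivially. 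This half is essentially a one-line argument once the Hessian identity is made explicit.

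For the \emph{well-definedness} part, I would argue directly in mean-parameter coordinates, which is cleaner than working with $\nabla A$ and its range. The dual iteration~\eqref{eq:MAP_dual} in mean coordinates reads $\mu^+ = \mu - \eta(\mu - T(X))$, i.e.\ componentwise the updates~\eqref{eq:update_m}–\eqref{eq:update_var}: $\m^+ = (1-\eta)\m + \eta X$ and $(\var)^+ = (1-\eta)[\var + \eta(\m - X)^2]$. The point $\theta\at{n+1}$ is well-defined precisely when $\mu^+$ lies in the (open) mean-parameter domain $\{(u,v) : u^2 < v\}$, since $\nabla A^*$ is a bijection from that set onto $\Theta$ (a consequence of Assumption~\ref{ass:blanket} / strict convexity of $A^*$). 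So the whole claim reduces to checking the strict inequality $(\m^+)^2 < (\var)^+$ given $\m^2 < \var$ and $0 < \eta < 1$. I would compute
\begin{align*}
    (\var)^+ - (\m^+)^2 &= (1-\eta)\var + \eta(1-\eta)(\m - X)^2 - \big((1-\eta)\m + \eta X\big)^2.
\end{align*}
Expanding the square and collecting terms, the $X$-dependent pieces should organize into $(1-\eta)\big[\var - \m^2\big]$ plus a nonnegative multiple of $(\m - X)^2$; concretely one expects $\eta(1-\eta)(\m-X)^2 - \eta^2(\m-X)^2 + (\text{cross terms})$ to combine so that the remainder is $(1-\eta)(\var - \m^2) + (\text{something})\,(\m - X)^2 \ge (1-\eta)(\var - \m^2) > 0$ whenever $\var - \m^2 > 0$. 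Since $X$ is a.s.\ a genuine sample (a.s.\ finite), this holds almost surely, giving $\theta\at{n+1} \in \Theta$ a.s.

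The main obstacle — really the only place requiring care — is the algebra in that last display: one must verify that the coefficient multiplying $(\m - X)^2$ after full expansion is $\ge 0$ for $\eta \in (0,1)$ and that no residual negative term can overwhelm the $(1-\eta)(\var - \m^2)$ slack. I would do this by substituting $\delta = X - \m$ to kill cross terms in $\m$ and $X$ simultaneously, reducing the inequality to a manifestly nonnegative expression in $\delta^2$ and $\var - \m^2$. A secondary subtlety is the direction of the reduction: one should state explicitly that membership of $\nabla A(\theta) - \eta \nabla f_X(\theta)$ in $\mathrm{Range}(\nabla A)$ is \emph{equivalent} to the image point under $\nabla A^*$ lying in the open mean domain, which is where strict convexity of $A$ (hence that $\nabla A$ is a diffeomorphism onto its image, with inverse $\nabla A^*$) is used; this is exactly what Assumption~\ref{ass:blanket} guarantees. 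Everything else is bookkeeping.
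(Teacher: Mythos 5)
Your first half (relative smoothness and strong convexity via $D_{f_\xi}=D_A$, since $f_{X}$ differs from $A$ by a linear term) is correct and is exactly the paper's argument. The gap is in the well-definedness half: you mix two coordinate systems. The set $\{(u,v):u^2<v\}$ is ${\rm Range}(\nabla A)$, i.e.\ the domain of the \emph{mean parameters} $\mu=(\m,\m^2+\var)$; translated into the Gaussian parameters $(\m,\var)$ used in~\eqref{eq:update_m}--\eqref{eq:update_var}, the condition $\mu_1^2<\mu_2$ is simply $\var>0$, not $\m^2<\var$. The inequality you set out to prove, $(\m^+)^2<(\var)^+$, is therefore neither the right target nor true: take $\m=10$, $\var=1$, $X=10$ (a legitimate state, since only $\var>0$ is required), which gives $(\m^+)^2=100$ while $(\var)^+=1-\eta$. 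This is also why your anticipated algebra cannot close: setting $\delta=X-\m$ one gets $(\var)^+-(\m^+)^2=(1-\eta)\var-\m^2-2\eta\m\delta+\eta(1-2\eta)\delta^2$, with a surviving cross term and a $\delta^2$ coefficient that is negative for $\eta>1/2$. In addition, the hypothesis $\m^2<\var$ you assume is not implied by $\theta\at{n}\in\Theta$.

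The repair is immediate once the coordinates are straightened out, and it is what the paper does. Either observe directly that $(\var)^+=(1-\eta)\left[\var+\eta(\m-X)^2\right]>0$ whenever $\var>0$ and $\eta<1$, which is the whole claim; or work in mean parameters, writing $\nabla A(\theta)-\eta\nabla f_{X}(\theta)=(1-\eta)\nabla A(\theta)+\eta T(X)$ with $T(X)=(X,X^2)$ in the closure of the convex set $\{u^2\le v\}$ and $\nabla A(\theta)$ in its interior: by convexity of $t\mapsto t^2$, $\left((1-\eta)\mu_1+\eta X\right)^2\le(1-\eta)\mu_1^2+\eta X^2<(1-\eta)\mu_2+\eta X^2$, so the strict convex combination ($\eta<1$) stays in $\{u^2<v\}={\rm Range}(\nabla A)$. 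Your framing of the reduction itself (membership in ${\rm Range}(\nabla A)$ being equivalent to the image under $\nabla A^*$ lying in the open mean domain, by strict convexity of $A$) is fine; only the translation of that domain into $(\m,\var)$ coordinates went wrong, and with it the decisive computation.
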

This result is a direct consequence of the fact that $D_{f_\xi} = D_A$ for all $\xi$, and the fact that $\nabla A(\theta) - \eta \nabla f_{X_n}(\theta) = (1 - \eta) \nabla A(\theta) + \eta T(X_n)  \in \{(u,v), u^2 < v\}$ if $\nabla A(\theta) \in \{(u,v), u^2 < v\}$. One can then deduce from this result that the variance of the mirror descent iterations $\vareta$ is bounded for $\eta < 1$, thanks to Corolary~\ref{cor:comparison_function_diff}.
We now investigate the properties of the minimizer of $f_\eta$ in this case, and first prove the following lemma:
\begin{lemma} \label{lemma:minimizer_feta}
    Let $(\m_\eta, \var_\eta)$ be the minimizer of $f_\eta$. Then, for $\eta < 1/3$:
    \begin{equation}
        \m_\eta = \mstar, \qquad \var_\star \geq \var_\eta \geq \left(1 - 3\eta\right) \var_\star.
    \end{equation}
    In particular, the variance $\vareta$ verifies:
    \begin{equation}
        \vareta \leq - \frac{1}{2\eta}\log\left(1 - 3\eta\right).
    \end{equation}
    For $1/3 < \eta \leq 1- \varepsilon$, $\vareta \leq c_\varepsilon$, where $c_\varepsilon$ is a numerical constant that only depends on $\varepsilon$.
\end{lemma}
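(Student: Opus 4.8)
The plan is to characterize the minimizer $(\m_\eta, \var_\eta)$ of $f_\eta$ explicitly using the closed form~\eqref{eq:feta}, then bound $\vareta = \frac{1}{\eta}(f(\theta_\star) - f_\eta(\theta_\star^+)) = -\frac{1}{\eta}(f_\eta(\m_\eta,\var_\eta) - f(\theta_\star))$ by plugging the minimizer back in. First I would note that the $\m$-dependence in~\eqref{eq:feta} enters only through $\esp{(\m-\XX)^2} = (\m - \mstar)^2 + \var_\star$ inside the logarithm (after using $\XX \sim \cN(\mstar,\var_\star)$ and, crucially, that $\log(1+\eta t/\var)$ is increasing in $t$, so expectation and the outer structure behave monotonically in $(\m-\mstar)^2$), so the minimizer necessarily has $\m_\eta = \mstar$; this also makes the remaining problem a one-dimensional minimization over $\var > 0$. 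Setting $\m = \mstar$, the quantity to minimize over $\var$ is
\begin{equation*}
    \phi(\var) = \frac{1}{2\eta}\esp{\log\!\left((1-\eta)\left(1 + \eta \frac{(\mstar - \XX)^2}{\var}\right)\right)} + \frac{1}{2}\log \var,
\end{equation*}
and I would differentiate in $\var$ and set to zero. The derivative gives a fixed-point equation of the form $\var_\eta = \eta \,\esp{\tfrac{(\mstar-\XX)^2}{1 + \eta(\mstar-\XX)^2/\var_\eta}} + (\text{lower order})$, or more cleanly $\frac{1}{\var_\eta} = \esp{\frac{1}{\var_\eta + \eta(\mstar-\XX)^2}}$ up to the bookkeeping of the $\eta^{-1}$ and $\frac12\log\var$ terms — essentially $\var_\eta$ is a (sub-)harmonic-mean-type average of $\var_\star + \eta(\text{noise})$, which immediately yields $\var_\eta \leq \var_\star$ by Jensen / concavity of $v \mapsto 1/v$ composed appropriately, and gives a lower bound $\var_\eta \geq (1-3\eta)\var_\star$ by bounding the perturbation terms linearly in $\eta$ for $\eta$ small (using $\esp{(\mstar-\XX)^2} = \var_\star$ and controlling $\esp{(\mstar-\XX)^4} = 3\var_\star^2$ to handle the second-order term, which is where the constant $3$ should appear).

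Once the sandwich $\var_\star \geq \var_\eta \geq (1-3\eta)\var_\star$ is established, the variance bound follows by evaluating: since $\vareta = -\frac{1}{\eta}(f_\eta(\mstar, \var_\eta) - f(\theta_\star))$ and $f(\theta_\star) = f_\eta$ would-be value at $\var = \var_\star$... more precisely, using~\eqref{eq:feta} with $\m = \mstar$ and the fact that at the true optimum the first expectation term, evaluated with $\var = \var_\star$, is $\frac{1}{2\eta}\esp{\log((1-\eta)(1 + \eta(\mstar-\XX)^2/\var_\star))}$, I would bound the whole expression from above. The cleanest route: from Corollary~\ref{cor:comparison_function_diff} or directly from positivity and the explicit form, $\eta\,\vareta = f(\theta_\star) - f_\eta(\mstar,\var_\eta) \leq f(\theta_\star) - f_\eta(\mstar, (1-3\eta)\var_\star)$ is the wrong direction, so instead I use that $\var_\eta$ is the minimizer and hence $f_\eta(\mstar,\var_\eta) \leq f_\eta(\mstar, \var_\star)$, giving $\eta\,\vareta \geq f(\theta_\star) - f_\eta(\mstar,\var_\star)$ — again wrong direction for an upper bound. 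The correct manipulation is to bound $\vareta$ above by substituting the extremal value of $\var_\eta$ into the \emph{value} $f_\eta(\mstar, \var_\eta)$ from below: since $\var_\eta \geq (1-3\eta)\var_\star$, and using Jensen $\esp{\log(1 + \eta(\mstar-\XX)^2/\var_\eta)} \geq 0$ trivially won't suffice, I instead directly compute that at the minimizer $f_\eta(\mstar,\var_\eta) - f(\theta_\star) \geq \frac{1}{2}\log(\var_\eta/\var_\star) \geq \frac{1}{2}\log(1-3\eta)$ after the positive expectation term and the $-\frac12\log(\var_\star/\var_\eta)$ term are recombined (the $\frac{1}{2\eta}\esp{\log(\cdots)}$ piece being handled by $\log(1-\eta) \le 0$ and a first-order expansion showing the $\esp{\log(1+\cdots)}$ part contributes a nonnegative amount that cancels against part of the $\var$ ratio). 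This yields $\vareta \leq -\frac{1}{2\eta}\log(1-3\eta)$.

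For the regime $1/3 < \eta \leq 1-\varepsilon$, I would not try to get a clean formula; instead I would argue by compactness and continuity. By the previous proposition, $\vareta$ is finite for all $\eta < 1$ (via Corollary~\ref{cor:comparison_function_diff}, since $f_\xi$ admits a minimizer in the interior almost surely and is $1$-relatively smooth), and $\eta \mapsto \vareta$ is continuous on $(0,1)$ — continuity follows because $f_\eta$ depends continuously on $\eta$ through the explicit expression~\eqref{eq:feta} and the minimizer varies continuously (the objective $\phi_\eta(\var)$ is smooth and strictly convex-ish with a unique minimizer that moves continuously by the implicit function theorem applied to the stationarity equation). Hence on the compact interval $[1/3, 1-\varepsilon]$ the continuous function $\eta \mapsto \vareta$ attains a finite maximum $c_\varepsilon$, which by construction depends only on $\varepsilon$ (and on $\mstar, \var_\star$ — though by the scaling/translation structure of the Gaussian problem one can normalize $\mstar = 0, \var_\star = 1$, so $c_\varepsilon$ is genuinely a numerical constant).

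The main obstacle will be the minimizer characterization and the sign-bookkeeping in the second paragraph: proving $\m_\eta = \mstar$ rigorously requires the monotonicity argument in $(\m-\mstar)^2$ inside a nested $\esp{\log(\cdot)}$, and then extracting the clean sandwich $\var_\star \ge \var_\eta \ge (1-3\eta)\var_\star$ from the fixed-point stationarity equation requires carefully expanding $\frac{1}{1+\eta t/\var}$ and controlling the $\esp{(\mstar-\XX)^4}$ term — this is where the factor $3$ must be tracked, and where an over-crude bound would give a worse constant or an invalid range of $\eta$. Getting the final $\vareta \le -\frac{1}{2\eta}\log(1-3\eta)$ cleanly (rather than with extra additive slack) also needs the positive-expectation term in $f_\eta$ to be shown to contribute in the favorable direction, which is the most delicate inequality.
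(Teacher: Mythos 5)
Your skeleton for the main regime (stationarity of $f_\eta$ in $\m$ and $\var$, symmetry giving $\m_\eta=\mstar$, the fourth moment $\esp{(\mstar-\XX)^4}=3\varstar^2$ producing the sandwich $(1-3\eta)\varstar\le\var_\eta\le\varstar$) is essentially the paper's, but the step that actually converts the sandwich into the bound on $\vareta$ is missing, and the mechanism you sketch for it would fail. What is needed is $f_\eta(\mstar,\var_\eta)-f(\theta_\star)\ge\frac12\log(\var_\eta/\varstar)$, i.e.\ $\esp{\log\left((1-\eta)\left[1+\eta(\m_\eta-\XX)^2/\var_\eta\right]\right)}\ge 0$. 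Your proposed handling (``$\log(1-\eta)\le 0$'' plus a first-order expansion) cannot deliver this: $\log(1-\eta)$ is the adverse term you must beat, and the inequality is \emph{false} at generic points of your sandwich --- e.g.\ at $\var=\varstar$, Jensen gives $\esp{\log\left((1-\eta)\left[1+\eta(\mstar-\XX)^2/\varstar\right]\right)}\le\log(1-\eta^2)<0$. It is genuinely a property of the stationary point: the paper uses $\log u\ge 1-1/u$ together with the exact optimality identity $\esp{(\m_\eta-\XX)^2/(\var_\eta+\eta(\m_\eta-\XX)^2)}=1$ (equivalently $\esp{\var_\eta/(\var_\eta+\eta(\m_\eta-\XX)^2)}=1-\eta$), which makes the resulting lower bound exactly zero. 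Note your own stationarity equation is off by precisely this factor $1-\eta$, and a Taylor expansion without the identity only gives $\esp{\log(1+\eta(\m_\eta-\XX)^2/\var_\eta)}\gtrsim\eta\varstar/\var_\eta$ minus a fourth-moment correction, which does not dominate $-\log(1-\eta)>\eta$ uniformly over the sandwich.

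Two further points. First, your route for $1/3<\eta\le 1-\varepsilon$ (finiteness plus continuity plus compactness, with a scaling normalization) is genuinely different from the paper, which instead applies Markov's inequality to the stationarity condition to get $\var_\eta\ge\alpha_\varepsilon\varstar$ and then reuses the same logarithmic inequality; your route could work in principle, but as written it rests on a false premise: in this model $f_\xi$ does \emph{not} admit a minimizer in $\interior\Theta$ (taking $\m=\XX$ and $\var\to 0$ sends $f_\XX\to-\infty$), so Corollary~\ref{cor:comparison_function_diff} does not apply, and finiteness and continuity of $\eta\mapsto\vareta$ would have to be extracted directly from the explicit formula for $f_\eta$ (coercivity, implicit function theorem at the stationary point). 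Second, the claim that the $\m$-dependence of $f_\eta$ enters only through $\esp{(\m-\XX)^2}$ is incorrect as stated, since the expectation sits outside the logarithm; $\m_\eta=\mstar$ needs either the stochastic-dominance argument for $(\m-\XX)^2$ under a symmetric unimodal law, or, as in the paper, the sign of $\esp{(\m-\XX)/(\var+\eta(\m-\XX)^2)}$ by symmetry. These last two are repairable, but the missing inequality at the stationary point is the core gap.
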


Note that we show in this example that $\var_\eta$ is arbitrarily close to $\var_\star$ as $\eta \rightarrow 0$, which is expected. 

\begin{theorem} \label{thm:map_estimator}
    The MAP estimator satisfies:
    \begin{equation*}
        D_A(\theta_\star, \theta\at{n}) \leq \frac{n_0 D_A(\theta_\star, \theta\at{0}) +\frac{3}{2}\log (1 + \frac{n+1}{n_0}) + \Gamma}{n + n_0},
    \end{equation*}
    where $\Gamma \geq 0$ is a numerical constant and $\Gamma = 0$ if $n_0 > 3$.
\end{theorem}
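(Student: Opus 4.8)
The plan is to instantiate the time-varying version of the master inequality~\eqref{eq:main_eq_thm_sc} with the particular structure of the MAP/MLE problem, using $\mu = 1$ (relative strong convexity from the preceding proposition, since $D_{f_\xi} = D_A$) and the decreasing step-sizes $\eta_n = (n + n_0)^{-1}$. Since $f_\eta$ depends on $\eta$, and here the step-size changes at every iteration, I would first rewrite~\eqref{eq:main_eq_thm_sc} at step $n$ in the form $\esp{D_A(\theta_\star, \theta\at{n+1})} \leq (1 - \eta_n \mu) \esp{D_A(\theta_\star, \theta\at{n})} + \eta_n^2 \sigma^2_{\star, \eta_n}$, dropping the nonnegative $\eta_n[f_{\eta_n}(\theta\at{n}) - f_{\eta_n}^\star]$ term. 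With $\mu = 1$ and $\eta_n = 1/(n+n_0)$, the contraction factor is $(1 - \eta_n) = (n + n_0 - 1)/(n + n_0)$, which telescopes cleanly: unrolling from $0$ to $n-1$ gives $\esp{D_A(\theta_\star, \theta\at{n})} \leq \frac{n_0 - 1}{n + n_0 - 1} D_A(\theta_\star, \theta\at{0}) + \sum_{k=0}^{n-1} \left(\prod_{j=k+1}^{n-1}\tfrac{j + n_0 - 1}{j + n_0}\right) \eta_k^2 \sigma^2_{\star, \eta_k}$. The product telescopes to $\frac{k + n_0}{n + n_0 - 1}$ (care is needed with the index shift and the $n_0=0$ MLE edge case, where one should start the recursion at $n=1$).

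The second ingredient is the variance bound from Lemma~\ref{lemma:minimizer_feta}: for $\eta_k < 1/3$ we have $\sigma^2_{\star, \eta_k} \leq -\frac{1}{2\eta_k}\log(1 - 3\eta_k)$, so $\eta_k^2 \sigma^2_{\star, \eta_k} \leq -\frac{\eta_k}{2}\log(1 - 3\eta_k)$. Plugging $\eta_k = 1/(k + n_0)$ and using the elementary inequality $-\log(1 - u) \leq u/(1-u)$ (or $-\log(1-3\eta_k) \le 3\eta_k/(1 - 3\eta_k)$), one gets $\eta_k^2\sigma^2_{\star,\eta_k} \lesssim \frac{1}{(k+n_0)^2}$ up to a constant close to $3/2$ for large $k + n_0$. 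Multiplying by the telescoped weight $\frac{k+n_0}{n+n_0-1}$, the summand becomes $\approx \frac{3}{2}\cdot\frac{1}{(k+n_0)(n+n_0-1)}$, and summing over $k$ from $0$ to $n-1$ produces $\frac{1}{n + n_0 - 1}\cdot\frac{3}{2}\sum_{k=0}^{n-1}\frac{1}{k+n_0} \approx \frac{3}{2}\cdot\frac{\log(1 + (n+1)/n_0)}{n+n_0}$, which is exactly the claimed dominant term. The finitely many early indices $k$ with $\eta_k \geq 1/3$ (i.e. $k + n_0 \leq 3$) contribute only when $n_0 \leq 3$; for those one uses the second clause of Lemma~\ref{lemma:minimizer_feta}, $\sigma^2_{\star,\eta_k} \leq c_\varepsilon$, and absorbs their total bounded contribution into the additive constant $\Gamma$, which vanishes when $n_0 > 3$ since then no such index occurs. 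Replacing $n + n_0 - 1$ by $n + n_0$ in denominators only loosens constants and can likewise be absorbed into $\Gamma$.

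The main obstacle is bookkeeping rather than conceptual: correctly handling the telescoping product of contraction factors with the index shifts, and carefully separating the regime $\eta_k < 1/3$ (where the sharp logarithmic variance bound applies and yields the leading $\frac32\log(1 + (n+1)/n_0)$ term) from the finitely many large-step-size iterates (which must be lumped into $\Gamma$). A secondary subtlety is that the bound from Lemma~\ref{lemma:minimizer_feta} is stated for the variance $\sigma^2_{\star,\eta}$ defined via the supremum/infimum of $f_\eta$, so I must make sure the master inequality~\eqref{eq:main_eq_thm_sc} — which was proved for constant $\eta$ — genuinely carries over verbatim to each individual step with its own $\eta_n$; this is fine because~\eqref{eq:main_eq_thm_sc} is a one-step inequality and the paper already notes (after Theorem~\ref{thm:rates_smd_sc}) that it applies with time-varying step-sizes. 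Finally, I would double-check the MLE case $n_0 = 0$: there $\eta_0 = \infty$ is vacuous, the recursion effectively starts at $n = 1$ with $\eta_1 = 1$, which is outside even the $\eta \le 1 - \varepsilon$ range, so one either restricts to $n \geq 2$ or notes that $\theta\at{1}$ is the empirical estimator and the bound holds from there with an appropriately enlarged $\Gamma$.
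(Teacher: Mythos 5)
Your proposal follows essentially the same route as the paper's proof: the one-step inequality behind Theorem~\ref{thm:rates_smd_sc} with $\mu=1$ and $\eta_k=1/(k+n_0)$, the variance bound of Lemma~\ref{lemma:minimizer_feta} with the finitely many iterates having $k+n_0\le 3$ absorbed into $\Gamma$ via $c_\varepsilon$ (so $\Gamma=0$ for $n_0>3$), and the same telescoping followed by bounding the harmonic sum by $\log(1+(n+1)/n_0)$ --- the paper simply writes your product-of-contractions unrolling as multiplying the recursion by $k$ before summing. The only cosmetic difference is that the paper bounds $-\tfrac{\eta}{2}\log(1-3\eta)$ directly by $\tfrac{3\eta^2}{2}$ instead of going through $-\log(1-u)\le u/(1-u)$.
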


Note that the numerical constants are not optimized. The main benefit is that we obtain an anytime result on the convergence of the MAP estimator for all $n\geq 0, n_0 \geq 1$ directly from the general Stochastic Mirror Descent convergence theorem. Yet, the open problem from~\citet{priol2021convergence} is not completely solved still. We discuss these aspects below. 

\textbf{Reverse KL bound.} We obtain a bound on $D_A(\theta_\star, \theta\at{n})$, instead of $D_A(\theta\at{n}, \theta_\star) = f(\theta) - f(\theta_\star)$. Note that the second term can be controlled asymptotically thanks to the bound on $f_\eta(\theta\at{n}) - f_\eta(\theta_\eta)$, and $f_\eta \rightarrow f$ when $\eta = 1/n \rightarrow 0$, but we might also be able to exploit this control over the course of the iterations.
    
\textbf{Asymptotic convergence.}  Theorem~\ref{thm:map_estimator} leads to a $O(\log n/n)$ asymptotic convergence rate instead of the expected $O(1/n)$~\citep{priol2021convergence}. This probably indicates that the globalization step (replacing $\theta$ by $\theta_\eta$), although leading to a finite variance, is too crude in this case. Indeed, $\theta_n$ actually has a lot of structure in this example, since $\nabla A(\theta_n) = \frac{1}{n}\sum_{k=1}^n T(X_k)$. The SMD analysis is oblivious to this structure, hence the gap.  Note that we can get rid of the $\log n$ factor and recover the right $O(1/n)$ rate from the same analysis by using a slightly different estimator than the MAP (or MLE). This is done by setting the step-size as $\eta_n = \frac{2}{n+1}$ for $n > 1$, and the analysis of this variant follows~\citet{lacoste2012simpler}, as detailed in Appendix~\ref{app:conv_from_theorems}. 

\textbf{The special case of the MLE.} The MLE corresponds to $n_0 = 0$, which is not handled in our analysis since the first step corresponds to $\eta = 1$, which necessarily results in $\theta_2^{(1)} = - \infty$ (which corresponds to $\var = 0$, as can be seen from~\eqref{eq:update_var}). If we consider that mirror descent is run from $\theta^{(1)}$, then we obtain $\esp{D_A(\theta_\star, \theta^{(1)})} = \infty$ in general, where the expectation is over the value of the first sample drawn. Therefore, we need to start the SMD analysis at $\theta^{(2)}$ to fit the MLE into this framework, and in particular we need to be able to evaluate $\esp{D_A(\theta_\star, \theta^{(2)})}$. This is further discussed in Appendix~\ref{app:MLE}.

\section{Conclusion}
This paper introduces a new notion of variance for the analysis of stochastic mirror descent. This notion, based on the fact that a certain function $f_\eta$ admits a minimum, is less restrictive than existing ones, has the right asymptotic scaling with the step-size and is bounded regardless of the trajectory of the iterates without further assumptions. 

We strongly believe that our analysis of SMD opens up new perspectives. As an example, we use our SMD results to show convergence of the MAP for estimating a Gaussian with unknown mean and covariance. As evidenced in~\citet{priol2021convergence}, all existing generic analyses of stochastic mirror descent failed to obtain such results.

Beyond SMD, this new way of looking at variance could prove useful for the analysis of many majorization minimization algorithms, including for instance (Euclidean) proximal SGD. While we have focused on stochastic gradients of the objective, considering stochastic mirror maps is an interesting problem that would widely extend the range of applications, and likely require even more general variance notions.

\section{Acknowledgements}
I would like to start by thanking R\'emi Le Priol and Frederik Kunstner for introducing me to MAP problem for Gaussian Parameters Estimation, and the interesting discussions that followed.

Many thanks go to Radu Dragomir, Baptiste Goujaud and Adrien Taylor for their many comments and suggestions, which immensely helped me write this paper.  

\newpage

\bibliography{biblio}
\bibliographystyle{icml2024}


\newpage
\appendix
\onecolumn

\section{Technical results on Bregman divergences}
\label{app:bregman_technical}

As for the rest of this paper, Assumption~\ref{ass:blanket} is assumed throughout this section. However, some of these results hold even with less regularity, and in particular do not require second order continuous differentiability. 
 
\begin{lemma}[Duality]
For all $x,y \in C$, it holds that:
    \begin{equation}
        D_h(x, y) = D_{h^*}(\nabla h(y), \nabla h(x))
    \end{equation}
\end{lemma}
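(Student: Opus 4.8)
The plan is to prove the duality identity $D_h(x,y) = D_{h^*}(\nabla h(y), \nabla h(x))$ by unfolding both Bregman divergences and using the fundamental properties of the convex conjugate. First I would recall the three facts I need about $h^*$, each of which follows from Assumption~\ref{ass:blanket} (strict convexity of $h$ on $C$ and existence/uniqueness of the conjugation problem): for any $x \in C$ we have the Fenchel equality $h(x) + h^*(\nabla h(x)) = \langle x, \nabla h(x)\rangle$ (since the supremum defining $h^*(\nabla h(x))$ is attained precisely at $x$), the inversion property $\nabla h^*(\nabla h(x)) = x$ (stated already in the excerpt just after Equation~\eqref{eq:smd_bis}), and consequently $h^*(\nabla h(x)) = \langle x, \nabla h(x)\rangle - h(x)$.

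Next I would simply expand the right-hand side. Writing $p = \nabla h(y)$ and $q = \nabla h(x)$, we have
\begin{equation*}
    D_{h^*}(p, q) = h^*(p) - h^*(q) - \langle \nabla h^*(q), p - q\rangle.
\end{equation*}
Using $\nabla h^*(q) = \nabla h^*(\nabla h(x)) = x$, together with $h^*(p) = \langle y, p\rangle - h(y)$ and $h^*(q) = \langle x, q\rangle - h(x)$, this becomes
\begin{equation*}
    D_{h^*}(p,q) = \langle y, \nabla h(y)\rangle - h(y) - \langle x, \nabla h(x)\rangle + h(x) - \langle x, \nabla h(y) - \nabla h(x)\rangle.
\end{equation*}
Then I would collect terms: the inner-product terms combine to $\langle y - x, \nabla h(y)\rangle$, leaving
\begin{equation*}
    D_{h^*}(p,q) = h(x) - h(y) - \langle \nabla h(y), x - y\rangle = D_h(x,y),
\end{equation*}
which is exactly the claim.

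The only subtlety — and the one place that needs Assumption~\ref{ass:blanket} rather than being pure algebra — is justifying that the Fenchel equality holds with equality (not just $\geq$) at the points $x, y \in C$, i.e. that the supremum in $h^*(\nabla h(x)) = \sup_{u \in C} \langle u, \nabla h(x)\rangle - h(u)$ is attained at $u = x$. This is precisely the content of the uniqueness-of-minimizer clause in Assumption~\ref{ass:blanket} (the problem $\min_u h(u) - u^\top \nabla h(x)$ has $x$ as its unique solution, since $x$ is a stationary point and $h$ is strictly convex, and the solution lies in $\interior C$ so first-order conditions are exactly $\nabla h(u) = \nabla h(x)$). Once that is in place, the rest is a routine rearrangement. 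I expect no real obstacle here; the computation is short and self-contained, so the write-up is essentially the display chain above preceded by one sentence invoking the conjugacy facts.
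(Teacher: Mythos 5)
Your computation is correct, but note that the paper does not actually prove this lemma: it simply cites \citet[Theorem 3.7]{bauschke1997legendre}, where the identity is established for Legendre functions under weaker regularity than Assumption~\ref{ass:blanket}. Your route is therefore a genuinely different (and more elementary) one: a direct expansion of $D_{h^*}(\nabla h(y), \nabla h(x))$ using the two conjugacy facts $\nabla h^*(\nabla h(x)) = x$ and the Fenchel equality $h^*(\nabla h(x)) = \langle x, \nabla h(x)\rangle - h(x)$, after which the inner-product terms collapse to $\langle \nabla h(y), y - x\rangle$ and give exactly $D_h(x,y)$. You also correctly isolate the one non-algebraic step, namely that the supremum defining $h^*(\nabla h(x))$ is attained at $x$; your justification (zero gradient of the strictly convex function $u \mapsto h(u) - u^\top \nabla h(x)$ at the feasible point $x$, hence unique global minimizer over $C$) is sound, and in fact does not even need the minimizer to lie in $\interior C$, since a vanishing gradient at a feasible point of a convex function already certifies global optimality over any convex set containing it. What the paper's citation buys is generality (essential smoothness/Legendre-type hypotheses, no second-order differentiability); what your argument buys is a short self-contained proof under the paper's blanket assumption, with the implicit understanding that $h^*$ is the genuine conjugate $h^*(y) = \sup_{x \in C}\, x^\top y - h(x)$ (the paper's displayed formula with $\arg\max$ is a typo) and that $\nabla h^*$ exists at the points $\nabla h(x)$, $\nabla h(y)$, which follows from the unique attainment you establish.
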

See, \emph{e.g.}~\citet[Theorem 3.7]{bauschke1997legendre} for the proof.

\begin{lemma}[Symmetrized Bregman]
For all $x, y \in C$, it holds that:
    \begin{equation}
        D_h(x, y) + D_h(y, x) = \langle \nabla h(x) - \nabla h(y), x - y\rangle
    \end{equation}
\end{lemma}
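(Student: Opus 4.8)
The plan is to prove this by a direct expansion from the definition of the Bregman divergence, with no extra machinery required — unlike the Duality lemma, this identity does not even need $h$ to be differentiable in any strong sense, only that $\nabla h$ exists at $x$ and $y$.

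First I would write out both divergences explicitly: $D_h(x,y) = h(x) - h(y) - \nabla h(y)^\top (x-y)$ and, swapping the roles of the arguments, $D_h(y,x) = h(y) - h(x) - \nabla h(x)^\top (y-x)$. Adding the two expressions, the function-value terms cancel in pairs, $h(x) - h(x) = 0$ and $-h(y) + h(y) = 0$, leaving only the two linear terms $- \nabla h(y)^\top (x-y) - \nabla h(x)^\top (y-x)$.

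Then I would rewrite $- \nabla h(x)^\top (y-x) = \nabla h(x)^\top (x-y)$ so that both remaining terms are expressed against the common vector $x-y$, giving $\nabla h(x)^\top(x-y) - \nabla h(y)^\top(x-y) = \langle \nabla h(x) - \nabla h(y), x-y\rangle$, which is exactly the claimed identity.

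There is essentially no obstacle here: the only thing to be careful about is the sign bookkeeping when flipping $(y-x)$ to $(x-y)$, and noting that the identity is purely formal, so it holds for every $x,y \in C$ (indeed on all of $\dom h$) without invoking convexity or Assumption~\ref{ass:blanket}. It can be stated in two or three lines of aligned algebra.
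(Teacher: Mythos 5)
Your proof is correct and is exactly the direct expansion from the definition that the paper has in mind (the paper simply states that the identity follows immediately from the definition of the Bregman divergence). The cancellation of the $h(x)$ and $h(y)$ terms and the sign flip on $(y-x)$ are handled properly, so nothing is missing.
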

The proof immediately follows from the definition of the Bregman divergence. The following result corresponds to~\citet[Lemma 3]{dragomir2021fast}.
\begin{lemma}[Bregman cocoercivity]
\label{lemma:breg_coco}
    If a convex function $f$ is $L$-relatively-smooth with respect to $h$, then for all $\eta \leq 1/L$,
    \begin{equation}
        D_{h^*}(\nabla h(x) - \eta\left[\nabla f(x) - \nabla f(y)\right], \nabla h(x)) \leq \eta D_f(x,y).    
    \end{equation}
    Denoting $x^{+y} = \nabla h^*(\nabla h(x) - \eta\left[\nabla f(x) - \nabla f(y)\right])$, a tighter result actually writes:
    \begin{equation}
        D_h(x, x^{+y}) + \eta D_f(x^{+y}, y) \leq \eta D_f(x, y).
    \end{equation}
\end{lemma}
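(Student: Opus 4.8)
The plan is to prove the \emph{tighter} inequality first, as the weaker (first) statement follows from it immediately: by the Duality Lemma, $D_h(x, x^{+y}) = D_{h^*}(\nabla h(x^{+y}), \nabla h(x)) = D_{h^*}(\nabla h(x) - \eta[\nabla f(x) - \nabla f(y)], \nabla h(x))$, and since $f$ is convex we have $\eta D_f(x^{+y}, y) \geq 0$, so dropping that term from the tighter bound yields exactly the first display. Note also that $x^{+y}$ is well-defined and lies in $\interior C$ under Assumption~\ref{ass:blanket}, since that assumption guarantees $\nabla h^*$ is defined on all of $\R^d$.

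For the tighter inequality, write $z = x^{+y}$, so that by construction $\nabla h(z) = \nabla h(x) - \eta[\nabla f(x) - \nabla f(y)]$. First I would upper bound $D_f(z, y)$. Expanding $D_f(z,y) = f(z) - f(y) - \nabla f(y)^\top(z-y)$, applying $L$-relative smoothness of $f$ to the pair $(z,x)$ in the form $f(z) \leq f(x) + \nabla f(x)^\top(z - x) + L D_h(z, x)$, and regrouping terms to reconstruct $D_f(x,y)$, this gives
\[
D_f(z, y) \leq D_f(x, y) + \big[\nabla f(x) - \nabla f(y)\big]^\top(z - x) + L D_h(z, x).
\]

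The key step is to rewrite the cross term using the defining identity for $z$: since $\nabla h(z) - \nabla h(x) = -\eta[\nabla f(x) - \nabla f(y)]$, we have $[\nabla f(x) - \nabla f(y)]^\top(z - x) = -\tfrac{1}{\eta}\langle \nabla h(z) - \nabla h(x), z - x\rangle$, and the Symmetrized Bregman Lemma turns this into $-\tfrac{1}{\eta}\big[D_h(z,x) + D_h(x,z)\big]$. Substituting, multiplying through by $\eta$, and rearranging yields
\[
\eta D_f(z, y) + D_h(x, z) \leq \eta D_f(x, y) - (1 - \eta L)\, D_h(z, x).
\]
Since $\eta \leq 1/L$ and $D_h(z, x) \geq 0$ by convexity of $h$, the residual term is $\leq 0$ and can be discarded, which is precisely the tighter claim (and then, as noted, dropping $\eta D_f(z,y)\geq 0$ and invoking duality gives the first display).

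I do not expect a genuine obstacle: the only real idea is trading the inner product $[\nabla f(x) - \nabla f(y)]^\top(z-x)$ for (negatively scaled) Bregman terms via the gradient identity defining $z$ together with the symmetrization lemma, and everything else is bookkeeping. The two points requiring a little care are (i) confirming $z = x^{+y}$ is well-defined, i.e.\ that $\nabla h(x) - \eta[\nabla f(x) - \nabla f(y)]$ lies in the range of $\nabla h$, which holds under Assumption~\ref{ass:blanket}; and (ii) keeping the order of the Bregman arguments straight, since $D_h$ is not symmetric --- in particular the discarded residual is $(1-\eta L)D_h(z,x)$ with arguments in the order $(z,x)$, matching the order in which relative smoothness was applied, and the term retained on the left is $D_h(x,z)$, the one that reappears via duality as $D_{h^*}(\nabla h(z),\nabla h(x))$.
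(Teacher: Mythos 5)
Your proof is correct, and it is essentially the standard argument behind this lemma: the paper itself does not spell out a proof (it defers to \citet[Lemma 3]{dragomir2021fast}, noting only that the tighter version follows by not discarding $D_f(x^{+y},y)\geq 0$), and your derivation --- trading $[\nabla f(x)-\nabla f(y)]^\top(z-x)$ for Bregman terms via the identity defining $x^{+y}$, then applying relative smoothness at $(x^{+y},x)$ with $\eta\leq 1/L$ --- is exactly that proof, merely organized around bounding $D_f(x^{+y},y)$ instead of expanding $D_h(x,x^{+y})$ directly.
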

The proof of the tighter version is simply obtained by not using that $ D_f(x^{+y}, y) \geq 0$ in the original proof. While we don't directly use it in this paper, it is sometimes useful. We now introduce the generalized bias-variance decomposition Lemma~\citep[Theorem 0.1]{pfau2013generalized}.

\begin{lemma}
    If $X$ is a random variable, then for all $u \in C$,
    \begin{equation}
        \esp{D_{h^*}(X, u)} = D_{h^*}(\esp{X}, u) + D_{h^*}(X, \esp{X}).
    \end{equation}
\end{lemma}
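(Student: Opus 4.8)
The plan is to prove this by a direct computation from the definition of the Bregman divergence, exactly mirroring the classical bias--variance decomposition for the squared error (which is the special case $h^* = \frac{1}{2}\|\cdot\|^2$). Throughout, write $\bar X = \esp{X}$, and expand each Bregman divergence using $D_{h^*}(a,b) = h^*(a) - h^*(b) - \langle \nabla h^*(b), a-b\rangle$.

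First I would take the expectation of $D_{h^*}(X,u) = h^*(X) - h^*(u) - \langle \nabla h^*(u), X-u\rangle$. Since $u$ is deterministic, only $h^*(X)$ and $X$ are random, so linearity of expectation gives $\esp{D_{h^*}(X,u)} = \esp{h^*(X)} - h^*(u) - \langle \nabla h^*(u), \bar X - u\rangle$.

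Next I would compute the two terms on the right-hand side. The term $D_{h^*}(\bar X, u)$ is just $h^*(\bar X) - h^*(u) - \langle \nabla h^*(u), \bar X - u\rangle$, while $\esp{D_{h^*}(X,\bar X)} = \esp{h^*(X)} - h^*(\bar X) - \langle \nabla h^*(\bar X), \esp{X} - \bar X\rangle = \esp{h^*(X)} - h^*(\bar X)$, because $\esp{X} - \bar X = 0$ annihilates the inner-product term. Adding the two, the $\pm h^*(\bar X)$ cancel and one recovers exactly $\esp{h^*(X)} - h^*(u) - \langle \nabla h^*(u), \bar X - u\rangle$, which is the expression obtained for the left-hand side. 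This closes the proof.

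The computation is essentially immediate, so there is no genuine obstacle; the only points deserving a word of care are (i) that $\nabla h^*$ is well-defined at the points where it is invoked — under Assumption~\ref{ass:blanket}, $h$ is strictly convex on $C$, so $h^*$ is differentiable on the relevant domain, which contains $u$ and $\bar X$ — and (ii) the implicit integrability conditions ($\esp{h^*(X)}$ and $\esp{X}$ finite) that let the expectations split by linearity. The ``hard part'' is therefore merely keeping track of which quantities are random and which are deterministic when distributing the expectation.
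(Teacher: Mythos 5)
Your proof is correct and is exactly the standard argument: the paper does not prove this lemma itself but cites Pfau (2013, Theorem 0.1), whose proof is the same expand-and-cancel computation you give, with the inner-product term killed by $\esp{X} - \esp{X} = 0$. The only point worth flagging is that the statement as printed has a small typo — the last term should be $\esp{D_{h^*}(X, \esp{X})}$ — which your computation implicitly (and correctly) reads in.
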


\section{Missing results on the variances}
\label{app:results_variance}

We start this section by proving the following lemma, which in particular ensures that  $D_h(\xx, \xxplus)/\eta$ increases with $\eta$ (and so decreases as $\eta \rightarrow 0$).
\begin{lemma}
\label{lemma:increasing_eta}
Let $\phi_\xi : \eta \mapsto \frac{1}{\eta} D_h(x, x^+(\eta, \xi))$. Then, $\nabla \phi_\xi(\eta) = \frac{1}{\eta^2}D_h(x^+(\eta, \xi), x) \geq 0$. 
\end{lemma}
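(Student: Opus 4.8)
The plan is to prove Lemma~\ref{lemma:increasing_eta} by direct differentiation of $\phi_\xi(\eta) = \frac{1}{\eta} D_h(x, x^+(\eta,\xi))$ with respect to $\eta$, using the implicit definition $\nabla h(x^+(\eta,\xi)) = \nabla h(x) - \eta \nabla f_\xi(x)$ from Equation~\eqref{eq:smd_bis}. First I would fix $\xi$ and $x$, write $p(\eta) = \nabla h(x) - \eta \nabla f_\xi(x)$ so that $x^+(\eta,\xi) = \nabla h^*(p(\eta))$, and note that $\tfrac{d}{d\eta} x^+(\eta,\xi) = -\nabla^2 h^*(p(\eta)) \nabla f_\xi(x)$, which is well-defined under Assumption~\ref{ass:blanket} (strict convexity and twice continuous differentiability make $\nabla h$ a diffeomorphism onto its image with $\nabla h^*$ also $C^1$).

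Next I would expand $\eta\, \phi_\xi(\eta) = D_h(x, x^+(\eta,\xi)) = h(x) - h(x^+) - \nabla h(x^+)^\top (x - x^+)$ and differentiate in $\eta$. Differentiating the right-hand side, the terms involving $\tfrac{d}{d\eta} x^+$ dotted against $(\nabla h(x) - \nabla h(x^+))$ vanish because $\nabla h(x) - \nabla h(x^+) = \eta \nabla f_\xi(x)$ and... actually the cleanest route is: $\tfrac{d}{d\eta} D_h(x, x^+) = -\langle \tfrac{d}{d\eta}\nabla h(x^+), x - x^+\rangle = \langle \nabla f_\xi(x), x - x^+\rangle$, using that $\tfrac{d}{d\eta}\nabla h(x^+) = -\nabla f_\xi(x)$ by definition. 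On the other hand, from $\nabla h(x) - \nabla h(x^+) = \eta \nabla f_\xi(x)$ we get $\langle \nabla f_\xi(x), x - x^+\rangle = \tfrac{1}{\eta}\langle \nabla h(x) - \nabla h(x^+), x - x^+\rangle = \tfrac{1}{\eta}\big(D_h(x, x^+) + D_h(x^+, x)\big)$ by the Symmetrized Bregman lemma. Hence $\tfrac{d}{d\eta}(\eta \phi_\xi(\eta)) = \tfrac{1}{\eta}(D_h(x,x^+) + D_h(x^+,x))$. Then $\nabla \phi_\xi(\eta) = \tfrac{1}{\eta}\tfrac{d}{d\eta}(\eta\phi_\xi) - \tfrac{1}{\eta^2}D_h(x,x^+) = \tfrac{1}{\eta^2}(D_h(x,x^+)+D_h(x^+,x)) - \tfrac{1}{\eta^2}D_h(x,x^+) = \tfrac{1}{\eta^2}D_h(x^+,x)$, which is $\geq 0$ by convexity of $h$, giving both the stated formula and monotonicity.

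The main obstacle is mostly bookkeeping: making sure the chain rule is applied correctly to $D_h(x, x^+(\eta,\xi))$ (it is tempting to also differentiate the explicit $h(x^+)$ and $\nabla h(x^+)$ terms and then cancel, so I would double-check that the cross terms indeed cancel, which they do precisely because $\partial_2 D_h(x,y) = -\nabla^2 h(y)(x-y)$ and contracting with $\tfrac{d}{d\eta}x^+$ reproduces the same quantity). A secondary point worth a remark is that differentiability of $\phi_\xi$ requires $x^+(\eta,\xi)$ to stay in the interior of $\dom h^*$'s domain and $D_h(x,x^+)$ to be finite on the relevant range of $\eta$; Assumption~\ref{ass:blanket} guarantees $x^+ \in \interior C$ and hence everything is smooth in $\eta$ on $(0,\infty)$ wherever the iterate is defined. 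I would state the computation for a fixed realization of $\xi$, so no interchange of expectation and differentiation is needed here.
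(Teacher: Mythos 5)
Your proposal is correct and follows essentially the same route as the paper's proof: differentiate $D_h(x, x^+(\eta,\xi))$ in $\eta$ using $\tfrac{d}{d\eta}\nabla h(x^+) = -\nabla f_\xi(x)$ (the cross terms cancel exactly as you note), rewrite $\nabla f_\xi(x)^\top(x-x^+)$ via the symmetrized Bregman identity, and conclude with the quotient rule and convexity of $h$. The only cosmetic difference is that you carry out the chain rule through $\partial_2 D_h$ while the paper differentiates the expanded expression term by term, which is the same computation.
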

\begin{proof}
First remark that since $\nabla h(\xxplus) = \nabla h(x) - \eta \nabla f_\xi(x)$, we can write
\begin{align*}
    \nabla_\eta\left[ D_h(x, x^+)\right] &= \nabla_\eta\left[ h(x) - h(x^+) - \nabla h(x^+)^\top(x - x^+)\right] \\
    &= - \nabla h(x^+)^\top \nabla_\eta x^+ + \nabla f_\xi(x)^\top (x - x^+) + \nabla h(x^+)^\top \nabla_\eta x^+\\
    &= \nabla f_\xi(x)^\top (x - x^+) \\
    &= \frac{1}{\eta} \left(\nabla h(x) - \nabla h(x^+)\right)^\top(x - x^+) = \frac{D_h(x,\xxplus) + D_h(\xxplus, \xx)}{\eta}. 
\end{align*}
Then, the expression follows from
\begin{equation}
\nabla \phi_\xi(\eta) = \nabla_\eta \left[\frac{1}{\eta} D_h(x, x^+)\right] = \frac{1}{\eta} \nabla_\eta \left[D_h(x, x^+)\right] - \frac{1}{\eta^2} D_h(x, x^+) = \frac{1}{\eta^2} D_h(x^+, x).
\end{equation} 
\end{proof}

\begin{proof}[Proof of Proposition~\ref{prop:continuity_feta}]
    We now prove that $f_\eta \rightarrow f$ when $\eta \rightarrow 0$.
    To show this, we note that for any fixed $x \in {\rm int}\ C$: 
    \begin{itemize}
        \item For any fixed $\xi$, $\frac{1}{\eta} D_h(x, x^+) = \frac{\eta}{2} ||\nabla f_\xi(x)||^2_{\nabla^2 h^*(z)}$ for $z \in [\nabla h(x), \nabla h(x) - \eta \nabla f_\xi(x)]$. Therefore, $\frac{1}{\eta} D_h(x, x^+) \rightarrow 0$ for $\eta \rightarrow 0$ since $\nabla^2 h^*(\nabla h(x)) = (\nabla^2 h(x))^{-1} < \infty$ by strict convexity of $h$.
        \item Let $\eta \leq \eta_0$. Then, for all $\xi$, $\frac{1}{\eta} D_h(x, x^+(\eta, \xi)) \leq \frac{1}{\eta_0} D_h(x, x^+(\eta_0, \xi))$ since the function $\eta \mapsto \frac{1}{\eta} D_h(x, x^+(\eta, \xi))$ is an increasing function (positive gradient using Lemma~\ref{lemma:increasing_eta}). 
        \item  $\frac{1}{\eta_0} \esp{D_h(x, x^+(\eta_0, \xi))}$ is finite.
    \end{itemize} 
Then, using the dominated convergence theorem, we obtain that we can invert the integral (expectation) and the limit, so that $\lim_{\eta \rightarrow 0}  \mathbb{E}\frac{1}{\eta}D_h(x, x^+) = \mathbb{E}  \lim_{\eta \rightarrow 0} \frac{1}{\eta}D_h(x, x^+) = 0$.
\end{proof}

\begin{proof}[Proof of Proposition~\ref{prop:limit_eta_0}]
    We prove this result by successively upper bounding and lower bounding $\vareta$, and making $\eta \rightarrow 0$.

    \emph{1 - Upper bound on $\vareta$.} One side is direct, by writing that $f(\xxplusstar) \geq f(\xxstar)$: 
    \begin{equation}
        \vareta = \frac{1}{\eta}\left(f(\xxstar) - f(\xxplusstar) + \frac{1}{\eta}\esp{D_h(\xxplusstar, \xxplusstar^+)}\right) \leq \frac{1}{\eta^2} \esp{D_h(\xxplusstar, \xxplusstar^+)}.
    \end{equation}
    From the proof of Proposition~\ref{prop:continuity_feta} we have pointwise convergence of $f_\eta$ to $f$. Since $f$ is convex and has a unique minimizer $\xxstar$, then $\xxplusstar \rightarrow \xxstar$ for $\eta \rightarrow 0$, which leads to the result.

    \emph{2 - Lower bound on $\vareta$.} By definition of $\xxplusstar$ as the minimizer of $f_\eta$, we have $f_\eta(\xxplusstar) \leq f_\eta(\xxstar)$, and so: 
    \begin{equation}
        \vareta = \frac{f(\xxstar) - f_\eta(\xxplusstar)}{\eta} \geq \frac{f(\xxstar) - f_\eta(\xxstar)}{\eta} = \frac{1}{\eta^2} \esp{D_h(\xxstar, \xxstar^+)}.
    \end{equation}
\end{proof}

Let us now prove the following proposition, which follows the proof from~\citet{dragomir2021fast}.

\begin{proof}[Proof of Proposition~\ref{prop:comparison_drago}]
    Let us prove that $\vareta \leq \esp{\|\nabla f_\xi(\xxstar)\|^2_{\nabla^2 h^*(z_\eta)}}$. We start by 
    \begin{align}
        D_h(x, x^+) &= D_{h^*}(\nabla h(x) - \eta \nabla f_\xi(x), \nabla h(x))\\
        &= D_{h^*}(\nabla h(x) - \eta \left[\nabla f_\xi(x) - \nabla f_\xi(\xxstar)\right] - \eta \nabla f_\xi(\xxstar), \nabla h(x))\\
        &=  D_{h^*}(\frac{\left(\nabla h(x) - 2\eta \left[\nabla f_\xi(x) - \nabla f_\xi(\xxstar)\right]\right) + \left(\nabla h(x) - 2\eta \nabla f_\xi(\xxstar)\right)}{2}, \nabla h(x)).
    \end{align}
    Using the convexity of $D_{h^*}$ in its first argument and then the Bregman cocoercivity lemma, we obtain for $\eta \leq 1/2L$: 
    \begin{align}
        D_h(x, x^+) &\leq  \frac{1}{2}D_{h^*}(\nabla h(x) - 2\eta \left[\nabla f_\xi(x) - \nabla f_\xi(\xxstar)\right]), \nabla h(x)) + \frac{1}{2}D_{h^*}(\nabla h(x) - 2\eta \nabla f_\xi(\xxstar), \nabla h(x))\\
        &\leq \eta D_{f_\xi}(x, \xxstar) + \frac{1}{2}D_{h^*}(\nabla h(x) - 2\eta \nabla f_\xi(\xxstar), \nabla h(x)).
    \end{align}
    Using that $\esp{D_{f_\xi}(x, \xxstar)} = D_f(\xx, \xxstar)$ and applying this to $x=\xxplusstar$, we obtain
    \begin{align*}
        \vareta &= \frac{f(\xxstar) - f_\eta(\xxplusstar)}{\eta}\\
        &= \frac{ \esp{D_h(\xxplusstar, \xxplusstar^+)} + \eta f(\xxstar) - \eta f(\xxplusstar)}{\eta^2}\\
        &\leq \frac{1}{2\eta^2} \esp{D_{h^*}(\nabla h(\xxplusstar) - 2\eta \nabla f_\xi(\xxstar), \nabla h(\xxplusstar))} + \frac{ D_f(\xxplusstar, \xxstar) + f(\xxstar) - f(\xxplusstar)}{\eta}\\
        &\leq \frac{1}{2\eta^2} \esp{D_{h^*}(\nabla h(\xxplusstar) - 2\eta \nabla f_\xi(\xxstar), \nabla h(\xxplusstar))} = \frac{1}{2\eta^2} \times \esp{\frac{1}{2}\|2\eta\nabla f_\xi(\xxstar)\|^2_{\nabla^2 h^*(z_\eta)}},
    \end{align*}
    and the result follows. The last inequality comes from the fact that if $\xxstar = \arg \min_{\xx \in C} f(x)$, then $-\nabla f(\xxstar)$ is normal to $C$ so $-\nabla f(\xxstar)^\top (\xxplusstar - \xxstar) \leq 0$.
    .
\end{proof}

\section{Convergence results.}
\label{app:convergence_results}

In this section, we detail the proofs of the various convergence theorems that were only sketched in the main text.  We start by proving the first identity, which is a variation of \emph{e.g.},~\citet[Lemma 4]{dragomir2021fast}, which we detail here for the sake of completeness.

\begin{lemma}
    \label{lemma:main_eq}
    Let $x^+ \in C$ be such that $\nabla h(x^+) = \nabla h(x) - \eta \nabla f_\xi(x)$, with $f_\xi$ a random differentiable function such that $\esp{f_\xi} = f$. Then, for all points $y \in C$,
    \begin{equation}
        \esp{D_h(y, \xxplus)} - D_h(y, \xx) + \eta D_f(y, \xx) = - \eta [f(\xx) - f(y)] + \esp{D_h(\xx, \xxplus)}
    \end{equation}
    In particular, we can apply the result to $y = \xxstar$.
\end{lemma}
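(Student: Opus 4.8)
The plan is to expand every Bregman divergence by its definition and watch for cancellations, using crucially the identity $\nabla h(x^+) = \nabla h(x) - \eta \nabla f_\xi(x)$ and the fact that $\esp{f_\xi} = f$ and $\esp{\nabla f_\xi} = \nabla f$. First I would write out the three Bregman terms on the left-hand side explicitly:
\begin{align*}
    D_h(y, x^+) &= h(y) - h(x^+) - \nabla h(x^+)^\top (y - x^+),\\
    D_h(y, x) &= h(y) - h(x) - \nabla h(x)^\top (y - x),\\
    D_h(x, x^+) &= h(x) - h(x^+) - \nabla h(x^+)^\top (x - x^+).
\end{align*}
Then $D_h(y, x^+) - D_h(y, x) - D_h(x, x^+)$ has the $h(y)$, $h(x)$ and $h(x^+)$ terms all cancel, leaving $-\nabla h(x^+)^\top(y - x^+) + \nabla h(x)^\top(y - x) + \nabla h(x^+)^\top(x - x^+)$, which simplifies to $\big(\nabla h(x) - \nabla h(x^+)\big)^\top (y - x) = \eta \nabla f_\xi(x)^\top (y - x)$ after substituting the SMD identity and noting $-\nabla h(x^+)^\top(y-x^+) + \nabla h(x^+)^\top(x-x^+) = \nabla h(x^+)^\top(x - y)$.

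Next I would take expectations: $\esp{\eta \nabla f_\xi(x)^\top (y-x)} = \eta \nabla f(x)^\top (y - x)$, since $y$ and $x$ are deterministic (or at least independent of $\xi$ at this step — here $x$ is the current iterate, fixed before drawing $\xi$). Finally I would relate $\nabla f(x)^\top(y-x)$ to the function values and the remaining Bregman term by the definition $D_f(y, x) = f(y) - f(x) - \nabla f(x)^\top(y - x)$, i.e. $\nabla f(x)^\top (y-x) = f(y) - f(x) - D_f(y,x)$. Substituting gives $\esp{D_h(y, x^+)} - D_h(y,x) - \esp{D_h(x, x^+)} = \eta\big(f(y) - f(x) - D_f(y, x)\big)$, which rearranges exactly to the claimed identity.

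I do not expect any real obstacle here — the statement is a bookkeeping identity and the only thing to be careful about is the direction of the arguments in each Bregman divergence (since $D_h$ is not symmetric) and making sure the expectation is taken at the right moment, namely after the linear-in-$\nabla f_\xi$ expression has been isolated so that linearity of expectation applies cleanly. The substitution $\nabla h(x) - \nabla h(x^+) = \eta \nabla f_\xi(x)$ is what makes the $h$-dependent parts collapse into something expressible purely through $f$, and this is the one place where the SMD update structure is used. Applying the result to $y = x_\star$ is then immediate.
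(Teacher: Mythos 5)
Your proposal is correct and follows essentially the same route as the paper's proof: expand the Bregman divergences, use the update identity $\nabla h(x)-\nabla h(x^+)=\eta\nabla f_\xi(x)$ to collapse the $h$-dependent terms into $\eta\nabla f_\xi(x)^\top(y-x)$, take the expectation (with $x$, $y$ fixed independently of $\xi$), and convert $\nabla f(x)^\top(y-x)$ into function values via the definition of $D_f(y,x)$. The only cosmetic difference is that you expand all three divergences and cancel, whereas the paper expands only $D_h(y,x^+)$ and splits $y-x^+=(y-x)+(x-x^+)$; the algebra is the same.
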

\begin{proof}
    We give a slightly different proof than~\citet{dragomir2021fast}, and in particular this version of the identity is slightly more direct (though maybe less insightful) and does not require $\nabla f(y) = 0$. We write:
    \begin{align*}
        \esp{D_h(y, \xxplus)} &= \esp{h(y) - h(\xxplus) - \nabla h(\xxplus)^\top (y - \xxplus)}\\
        &= \esp{h(y) - h(\xxplus) - \nabla h(\xxplus)^\top (y - \xx) - \nabla h(\xxplus)^\top (\xx - \xxplus)}\\
        &= \esp{h(y) - h(x) - \nabla h(\xx)^\top (y - \xx) + \eta \nabla f_\xi(\xx)^\top (y - \xx) - \nabla h(\xxplus)^\top (\xx - \xxplus) + h(x) - h(\xxplus)}\\
        &= D_h(y, \xx) + \eta \nabla f(\xx)^\top(y - \xx) + \esp{D_h(\xx, \xxplus)}\\
        &= D_h(y, \xx) - \eta D_f(y, \xx) + \eta\left[f(y) - f(x)\right] + \esp{D_h(\xx, \xxplus)}.
    \end{align*}
\end{proof}

\begin{proof}[Proof of Corollary~\ref{thm:rates_smd_sc_fxplus}]
    We start back from Equation~\eqref{eq:proof_sc1}, and write, using that $f_\eta(x) \geq \esp{f_\xi(\xxplus)}$ (proof of Proposition~\ref{prop:function_primal}):
    \begin{align}
        \esp{D_h(\xxstar, \xxplus)} - D_h(\xxstar, \xx) + \eta D_f(\xxstar, \xx) &= \eta \left[f(\xxstar) - f_{\eta}(\xx)\right]\\
        &\leq \eta \left[f(\xxstar) - \esp{f_\xi(\xxplus)}\right]\\
        &\leq - \eta \left[\esp{f_\xi(\xxplus)} - f_\star^+\right] + \eta^2 \left(\frac{f(\xxstar) - f^+_\star}{\eta}\right).
    \end{align}
    The result follows naturally from using the relative strong convexity of $f$, leading to: 
    \begin{equation}
    \begin{split}
        \eta&[\esp{f_\xi(\xxplus)} - f_+^\star] +  D_h(\xxstar, \xxplus) \leq ( 1 - \eta \mu) D_h(\xxstar, \xx) + \eta^2 \left[\frac{f(\xxstar) - f_+^\star}{\eta}\right].
    \end{split}
    \end{equation}
    Then, we chain iterations as done for Theorem~\ref{thm:rates_smd_sc}
\end{proof}

\begin{proof}[Proof of Theorem~\ref{thm:smd_convex_main}]
    We also start from the same result as above, and write it for $\xx=\xx\at{k}$, so that $\xxplus = \xx\at{k+1}$: 
    \begin{align}
        \esp{D_h(\xxstar, \xx\at{k+1})} - D_h(\xxstar, \xx\at{k}) + \eta D_f(\xxstar, \xx\at{k}) = \eta \left[f(\xxstar) - f_{\eta}(\xx\at{k})\right]\leq - \eta \left[f_\eta(\xx\at{k}) - f_\eta(\xx_\eta)\right] + \eta^2 \vareta. 
    \end{align}
    Moving the $f_\eta$ terms to the left, and summing this for $k=0$ to $T$ leads to:
    \begin{equation}
        \eta \sum_{k=0}^T \left[f_\eta(\xx\at{k}) - f_\eta(\xx_\eta) + D_f(\xxstar, \xx\at{k})\right] \leq D_h(\xxstar, \xx\at{0}) - D_h(\xxstar, \xx\at{k+1}) + T \eta^2 \vareta.
    \end{equation}
    The final result is obtained by dividing by $\eta T$, and the fact that $D_h(\xxstar, \xx\at{k+1}) \geq 0$.
\end{proof}

\begin{proof}[Proof of Proposition~\ref{prop:df_xstar_x}]
    We use Bregman cocoercivity (Lemma~\ref{lemma:breg_coco}) with $\eta = \frac{1}{L}$ between $x_\star$ and $x$ (instead of $x$ and $\xxstar$ as it had been done previously), which directly leads to:
    \begin{equation}
        D_{h^*}\left(\nabla h^*(x_\star) - \frac{1}{L} \left[\nabla f(x_\star) - \nabla f(x)\right]\right) \leq \frac{1}{L} D_f(x_\star, x).
    \end{equation}
    The first part of the proposition follows from the fact that $\nabla f(x_\star) = 0$. For the rest proof, we start with Inequality (32), which gives:
\begin{align*}
    0 &= D_{h^*}(\nabla h(x_\star) - \frac{1}{L}\nabla f(x), \nabla h(x_\star))\\
    &= D_{h}\left(x_\star, \nabla h^*\left( \nabla h(x_\star) - \frac{1}{L}\nabla f(x)\right)\right).
\end{align*}
At this point, strict convexity of $h$ leads to $\nabla h^*\left( \nabla h(x_\star) - \frac{1}{L}\nabla f(x)\right) = x_\star$, so that $\nabla f(x) = 0$ by applying $\nabla h$ on both sides. 
\end{proof}

\section{Variation on the convex case}
In this section, we quickly illustrate that the result we obtain is tightly linked to the notion of variance that we define. As an example, a variation of Theorem~\ref{thm:smd_convex_main} can be obtained with a control on $f(\xx) - f(\xxstar)$, but this requires a different notion of variance:
\begin{theorem}\label{thm:smd_convex_alt}
    If $f$ is convex, the iterates obtained by SMD using a constant step-sizes $\eta > 0$ verify 
    \begin{equation}
         f\left(\frac{1}{T} \sum_{k=0}^T \xx\at{k}\right) - f(\xxstar) \leq \frac{D_h(\xxstar, \xx\at{0})}{\eta T} + \eta \tilde{\sigma}_{\star, \eta}^2,
    \end{equation}
    where 
    \begin{equation}
        \tilde{\sigma}_{\star, \eta}^2 = \frac{1}{\eta} \max_{x \in C} \left\{\frac{1}{\eta} \esp{D_h(\xx, \xxplus)} - D_f(\xxstar, \xx)\right\}.
    \end{equation}
\end{theorem}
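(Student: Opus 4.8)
The plan is to mirror the proof of Theorem~\ref{thm:smd_convex_main}, but to keep the function gap $f(\xx) - f(\xxstar)$ rather than trading it for $f_\eta(\xx) - f_\eta^\star$, which is where a different variance notion becomes necessary. Starting from Lemma~\ref{lemma:main_eq} applied at $y = \xxstar$, I would rearrange the identity
\begin{equation*}
    \esp{D_h(\xxstar, \xxplus)} - D_h(\xxstar, \xx) + \eta D_f(\xxstar, \xx) = - \eta [f(\xx) - f(\xxstar)] + \esp{D_h(\xx, \xxplus)}
\end{equation*}
into the form
\begin{equation*}
    \eta [f(\xx) - f(\xxstar)] + \esp{D_h(\xxstar, \xxplus)} \leq D_h(\xxstar, \xx) - \eta D_f(\xxstar, \xx) + \esp{D_h(\xx, \xxplus)}.
\end{equation*}
The key move is to bound the last two ``error'' terms on the right by $\eta^2 \tilde\sigma_{\star,\eta}^2$: by definition of $\tilde\sigma_{\star,\eta}^2$ as a supremum over $x\in C$, we have $\frac{1}{\eta}\esp{D_h(\xx,\xxplus)} - D_f(\xxstar,\xx) \leq \eta \tilde\sigma_{\star,\eta}^2$ for every $\xx$, hence $\esp{D_h(\xx,\xxplus)} - \eta D_f(\xxstar,\xx) \leq \eta^2 \tilde\sigma_{\star,\eta}^2$. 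This yields the per-step inequality
\begin{equation*}
    \eta [f(\xx\at{k}) - f(\xxstar)] + \esp{D_h(\xxstar, \xx\at{k+1})} \leq D_h(\xxstar, \xx\at{k}) + \eta^2 \tilde\sigma_{\star,\eta}^2.
\end{equation*}

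Next I would chain this inequality for $k = 0, \dots, T$, telescoping the $D_h(\xxstar, \xx\at{k})$ terms and dropping the nonnegative $\esp{D_h(\xxstar, \xx\at{T+1})}$, to get
\begin{equation*}
    \eta \sum_{k=0}^T \esp{f(\xx\at{k}) - f(\xxstar)} \leq D_h(\xxstar, \xx\at{0}) + (T+1)\eta^2 \tilde\sigma_{\star,\eta}^2.
\end{equation*}
Dividing by $\eta(T+1)$ gives a bound on the average of $\esp{f(\xx\at{k}) - f(\xxstar)}$, and then Jensen's inequality (convexity of $f$) lets me pull the average inside $f$, producing $f\big(\frac{1}{T+1}\sum_{k=0}^T \xx\at{k}\big) - f(\xxstar) \leq \frac{D_h(\xxstar,\xx\at{0})}{\eta(T+1)} + \eta\tilde\sigma_{\star,\eta}^2$, which matches the claimed statement up to the harmless $T$ versus $T+1$ normalization (taking expectations throughout, and noting $f(\xxstar)$ is deterministic).

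The point of the argument — and the reason this is presented as a ``variation'' — is precisely that the convexity of $f$ is available to handle the average iterate, which was impossible in Theorem~\ref{thm:smd_convex_main} because $f_\eta$ need not be convex and $D_h(\xxstar,\cdot)$ is not convex in its second argument. The only real subtlety, and the step I'd be most careful about, is the legitimacy of bounding the error terms by $\eta^2\tilde\sigma_{\star,\eta}^2$ uniformly: this is immediate from the definition of $\tilde\sigma_{\star,\eta}^2$ as a supremum over all $x \in C$, but it is worth noting that — exactly as emphasized for the other variance notions in Section~\ref{sec:var} — this quantity genuinely requires a global supremum and may fail to be finite without further structure, so the result is conditional on $\tilde\sigma_{\star,\eta}^2 < \infty$. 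No relative smoothness or step-size restriction is needed for the inequality chain itself, again consistent with the discussion following Theorem~\ref{thm:rates_smd_sc}.
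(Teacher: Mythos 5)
Your proposal is correct and follows essentially the same route as the paper's proof: apply Lemma~\ref{lemma:main_eq} at $y=\xxstar$, bound $\frac{1}{\eta}\esp{D_h(\xx,\xxplus)} - D_f(\xxstar,\xx)$ by $\eta\tilde\sigma_{\star,\eta}^2$ via its defining supremum, telescope, and finish with Jensen's inequality on the convex $f$. The only differences are cosmetic (the $T$ versus $T+1$ normalization and your explicit remarks on expectations and finiteness of $\tilde\sigma_{\star,\eta}^2$), not a different argument.
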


Note that this alternative variance definition can be unbounded even when $\vareta$ is bounded, as is the case for instance in the Gaussian MAP example. Besides, it does not inherit from most of the good properties of $\sigma_{\star, \eta}^2$ presented in Section~\ref{sec:var}, and cannot be compared to the other standard variance notions. The main case in which this alternative definition makes sense is the Euclidean case, in which $\tilde{\sigma}_{\star, \eta}^2$ can be bounded using cocoercivity.

\begin{proof}[Proof of Theorem~\ref{thm:smd_convex_alt}]
    This proof directly starts from Lemma~\ref{lemma:main_eq}:
    \begin{align}
        \esp{D_h(\xxstar, \xx\at{k+1})} &=  D_h(\xxstar, \xx\at{k}) - \eta D_f(\xxstar, \xx\at{k}) - \eta [f(\xx\at{k}) - f(\xxstar)] + \esp{D_h(\xx\at{k}, (\xx\at{k})^+)}\\
        &= D_h(\xxstar, \xx\at{k}) - \eta [f(\xx\at{k}) - f(\xxstar)] + \eta \left[ \frac{1}{\eta}\esp{D_h(\xx\at{k}, (\xx\at{k})^+)} - D_f(\xxstar, \xx\at{k})\right]\\
        &\leq D_h(\xxstar, \xx\at{k}) - \eta [f(\xx\at{k}) - f(\xxstar)] + \eta^2 \tilde{\sigma}^2_{\star, \eta}.
    \end{align}
    Summing this for $k=0$ to $T$, and dividing by $\eta T$ we obtain: 
    \begin{equation}
        \frac{1}{T} \sum_{k=0}^T f(\xx\at{k}) - f(\xxstar) \leq \frac{D_h(\xxstar, \xx\at{0})}{\eta T} + \eta\tilde{\sigma}^2_{\star, \eta}
    \end{equation}
    The result on the average iterate then follows from convexity of $f$.
\end{proof}

\section{Stochastic Mirror Descent with a Proximal term}
\label{app:proj}
We are interested in this section in a variation of the original problem, where we would like to solve the following problem:
\begin{equation}
    \min_{\xx \in C} f(x) + g(x),
\end{equation}
where $g$ is a convex proper lower semi-continuous function (but not necessarily differentiable). This problem can be solved using the following stochastic proximal mirror descent algorithm: 
\begin{equation}
    x^+ = \arg \min_{u \in C} g(u) + \nabla f_\xi(\xx)^\top u + \frac{1}{\eta}D_h(u, \xx).
\end{equation}
In this case, the iterations write: 
\begin{equation}
    \label{eq:prox_grad_iterations}
    \nabla h(x^+) = \nabla h(x) - \eta \left[\nabla f_\xi(x) + \gradg \right]
\end{equation}
where $ \gradg \in \partial g(x^+)$. We now prove an equivalent for Lemma~\ref{lemma:main_eq}. 
\begin{lemma}
\label{lemma:prox_grad}
    Let $x^+ \in C$ be such that $\nabla h(x^+) = \nabla h(x) - \eta \left[\nabla f_\xi(x) + \gradg \right]$, with $f_\xi$ a random differentiable function such that $\esp{f_\xi} = f$ and $\gradg \in \partial g(x^+)$ where $g$ is a convex proper lower semi-continuous function. Then, for all $y\in C \cap {\rm dom} g$,
    \begin{equation}
        \esp{D_h(y, \xxplus)} = D_h(y, \xx) - \eta D_f(y, \xx) - \eta [f(\xx) - f(y)] + \esp{D_h(\xx, \xplusf) - D_h(\xxplus, \xplusf)} + \eta \gradg^\top(y - \xxplus),
    \end{equation}
    where $\xplusf$ is the point such that $\nabla h(\xplusf) = \nabla h(x) - \eta \nabla f_\xi(x)$.
\end{lemma}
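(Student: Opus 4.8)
The plan is to reduce the proximal identity to the non-proximal Lemma~\ref{lemma:main_eq}, applied at the auxiliary point $\xplusf$, combined with a purely algebraic three-point identity for the Bregman divergence. The first observation is that $\xplusf$, defined by $\nabla h(\xplusf) = \nabla h(\xx) - \eta \nabla f_\xi(\xx)$, is exactly the output of an ordinary (non-proximal) stochastic mirror descent step from $\xx$ with the random function $f_\xi$. Hence Lemma~\ref{lemma:main_eq}, with its ``$x^+$'' instantiated as $\xplusf$ and the same target point $y$, applies verbatim and gives
\[
  \esp{D_h(y, \xplusf)} = D_h(y, \xx) - \eta D_f(y, \xx) - \eta\,[f(\xx) - f(y)] + \esp{D_h(\xx, \xplusf)}.
\]

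Next I would connect $D_h(y, \xxplus)$ to $D_h(y, \xplusf)$. Subtracting the two defining gradient equations (that of $\xxplus$ in~\eqref{eq:prox_grad_iterations} and that of $\xplusf$) yields the key relation $\nabla h(\xplusf) - \nabla h(\xxplus) = \eta\,\gradg$. Then I apply the three-point identity $D_h(a,c) - D_h(a,b) - D_h(b,c) = \langle \nabla h(b) - \nabla h(c),\, a - b\rangle$, which is immediate from the definition of $D_h$, with $a = y$, $b = \xxplus$, $c = \xplusf$. This gives $D_h(y, \xxplus) = D_h(y, \xplusf) - D_h(\xxplus, \xplusf) + \langle \nabla h(\xplusf) - \nabla h(\xxplus),\, y - \xxplus\rangle = D_h(y, \xplusf) - D_h(\xxplus, \xplusf) + \eta\,\gradg^\top(y - \xxplus)$, which holds for each realization of $\xi$.

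Finally, I would take expectations in that last identity (keeping $\gradg$, which is random through $\xxplus$, inside the expectation) and substitute the formula for $\esp{D_h(y, \xplusf)}$ from the first step; collecting terms produces exactly the claimed identity, with all post-step random quantities (including $\gradg^\top(y-\xxplus)$) understood under the expectation. There is no serious obstacle here: the derivation is essentially mechanical, and the only points requiring care are (i) that $\xplusf \in \interior C$ by Assumption~\ref{ass:blanket}, so $\nabla h(\xplusf)$ is well defined and Lemma~\ref{lemma:main_eq} genuinely applies even when $\xxplus$ sits on the boundary of $C$ (e.g.\ when $g$ is the indicator of a set); (ii) that $\nabla h(\xxplus)$ is the quantity posited by the iteration~\eqref{eq:prox_grad_iterations}, consistent with reading the update as SMD with mirror $h + \eta g$; and (iii) sign bookkeeping in the three-point identity and the subtraction of the gradient equations. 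An alternative is to expand $D_h(y,\xxplus) = h(y) - h(\xxplus) - \nabla h(\xxplus)^\top(y - \xxplus)$ directly and substitute $\nabla h(\xxplus) = \nabla h(\xx) - \eta[\nabla f_\xi(\xx) + \gradg]$ as in the proof of Lemma~\ref{lemma:main_eq}, but routing through $\xplusf$ keeps the computation shortest.
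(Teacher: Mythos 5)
Your proof is correct and follows essentially the same route as the paper: the paper also expands $D_h(y,\xxplus)$ using $\nabla h(\xplusf)-\nabla h(\xxplus)=\eta\gradg$ to get $D_h(y,\xxplus)=D_h(y,\xplusf)-D_h(\xxplus,\xplusf)+\eta\gradg^\top(y-\xxplus)$ and then applies Lemma~\ref{lemma:main_eq} to $D_h(y,\xplusf)$. The only cosmetic difference is that you invoke the three-point identity by name while the paper carries out the same expansion by hand.
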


\begin{proof}
We write: 
\begin{align*}
    D_h(y, \xxplus) &= h(y) - h(\xxplus) - \nabla h(\xxplus)^\top (y - \xxplus)\\
    &= h(y) - h(\xplusf) - \nabla h(\xplusf)^\top (y - \xxplus) + \eta \gradg^\top(y - \xxplus) - h(\xxplus) + h(\xplusf)\\
    &= D_h(y, \xplusf) - \nabla h(\xplusf)^\top (\xplusf - \xxplus) + \eta \gradg^\top(y - \xxplus) - h(\xxplus) + h(\xplusf)\\
    &= D_h(y, \xplusf) - D_h(\xxplus, \xplusf) + \eta \gradg^\top(y - \xxplus)
\end{align*}
The result follows from applying Lemma~\ref{lemma:main_eq} to $D_h(y, \xplusf)$. 
\end{proof}

Note that by abuse of notation, if we denote $D_g(y,x^+) = g(y) - g(x^+) - \gradg^\top(y - x^+)$, and $D_g(y, x) = g(y) - g(x) - \omega_x^\top(y - x)$ for any $\omega_x \in \partial g(x)$, then with a few lines of computations, and noting in particular that $D_h(x, \xplusf) - D_h(\xxplus, \xplusf) = D_h(x, \xxplus) - \left[\nabla h(\xplusf) - \nabla h(\xxplus)\right]^\top(x - \xxplus)$ we recover exactly the result of Lemma~\ref{lemma:main_eq} applied to the iterations in which we take (sub)-gradients of $f+g$ with mirror $h + \eta g$. This is not surprising and justifies the remark about Stochastic Mirror Descent with a proximal term written out in the main text. In particular, Theorems~\ref{thm:rates_smd_sc} and~\ref{thm:smd_convex_main} can be transposed directly to the composite ($f+g$) setting by simply defining generalized Bregman divergences where the gradient parts are replaced by the subgradients picked in the actual SMD steps.

While $h + \eta g$ does not necessarily satisfy Assumption~\ref{ass:blanket}, the key point is that iterations can be written in the form of Equation~\eqref{eq:prox_grad_iterations}, which is the case for instance if $g$ is the indicator of a convex set.  

Note that Corollary~\ref{thm:rates_smd_sc_fxplus} also holds in the same way, since relative smoothness is only needed to obtain that $\eta D_{f_\xi+g}(x, x^+) \leq D_{h + \eta g}(x, x^+)$, which is equivalent to $\eta D_{f_\xi}(x, x^+) \leq D_h(x, x^+)$, which holds by $L$-relative smoothness of $f$ with respect to $h$ for $\eta \leq 1/L$.

\section{Gaussian case with unknown covariance.}
\label{app:gaussian_estimation}

In this section, we prove the various results for Gaussian estimation with unknown mean and covariance. For the sake of brevity, we only prove the propositions, and refer the interested reader to, \emph{e.g.}, \citet{priol2021convergence} for standard results about the setting. 

\subsection{Instanciation in the Stochastic mirror descent setting}
We first write what the various divergences are in our setting, together with the mirror updates and finally the form of $f_\eta$. Following \citet[Section 4.2]{priol2021convergence}, we write that:
\begin{equation} \label{eq:theta_m_sigma}
     \theta_1 = \frac{\mean}{\var}, \qquad \theta_2 = - \frac{1}{2\var}.
\end{equation}

This allows us to express $A(\theta)$ in terms of $(\mean, \var)$:
\begin{equation}\label{eq:Atheta}
    A(\theta) = -\frac{1}{2}\log(-\theta_2) - \frac{\theta_1^2}{4\theta_2} = \frac{1}{2}\log(2\var) + \frac{1}{2} \frac{\mean^2}{\var}
\end{equation}

\begin{proposition}\label{prop:DA}
The Bregman divergence with respect to $\tilde{\theta}, \theta$ writes:
    \begin{equation}
        D_A(\tilde{\theta}, \theta) = -\frac{1}{2}\log\left(\frac{\var}{\vartilde}\right) - \frac{\vartilde - \var}{2 \vartilde} + \frac{(\mtilde - \m)^2}{2\vartilde}.
    \end{equation}
\end{proposition}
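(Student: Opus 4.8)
The plan is to compute $D_A(\tilde\theta,\theta)$ directly from its definition, $D_A(\tilde\theta,\theta) = A(\tilde\theta) - A(\theta) - \nabla A(\theta)^\top(\tilde\theta - \theta)$, by pushing every quantity through the change of variables~\eqref{eq:theta_m_sigma}. First I would use~\eqref{eq:Atheta} to write $A(\theta) = \tfrac12\log(2\var) + \tfrac12\,\m^2/\var$ and likewise for $A(\tilde\theta)$, so that $A(\tilde\theta) - A(\theta) = \tfrac12\log(\vartilde/\var) + \tfrac12\big(\mtilde^2/\vartilde - \m^2/\var\big)$.

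Next I would compute $\nabla A(\theta)$. Since $A$ is the log-partition function of the Gaussian family with $T(\XX) = (\XX,\XX^2)$, one has $\nabla A(\theta) = \esp{T(\XX)} = (\m,\ \var + \m^2)$; alternatively this follows by direct differentiation of~\eqref{eq:Atheta}, using $\partial A/\partial\theta_1 = -\theta_1/(2\theta_2)$ and $\partial A/\partial\theta_2 = -1/(2\theta_2) + \theta_1^2/(4\theta_2^2)$ together with $\theta_1 = \m/\var$, $\theta_2 = -1/(2\var)$. Then, writing $\tilde\theta - \theta = \big(\mtilde/\vartilde - \m/\var,\ 1/(2\var) - 1/(2\vartilde)\big)$, the inner product $\nabla A(\theta)^\top(\tilde\theta - \theta)$ expands to $\m\big(\mtilde/\vartilde - \m/\var\big) + (\var+\m^2)\big(1/(2\var) - 1/(2\vartilde)\big)$.

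The remaining step is bookkeeping: substitute the two pieces into the definition and regroup. The term $\tfrac12\log(\vartilde/\var)$ becomes $-\tfrac12\log(\var/\vartilde)$; the $\m/\mtilde$ terms collapse, after completing the square, to $(\mtilde-\m)^2/(2\vartilde)$; and the variance-only terms combine into $-(\vartilde-\var)/(2\vartilde)$, giving exactly the claimed identity. The main (and essentially only) obstacle here is keeping the algebra clean — there is no conceptual difficulty, but it is easy to mismatch a sign in the components of $\tilde\theta - \theta$ or to confuse which argument of $D_A$ carries $\var$ versus $\vartilde$. As a sanity check, and an alternative route, I would note the classical identity $D_A(\tilde\theta,\theta) = \mathrm{KL}\big(\cN(\m,\var)\,\|\,\cN(\mtilde,\vartilde)\big)$ for exponential families, and verify that the right-hand side of the proposition matches the textbook Gaussian KL formula $\tfrac12\big(\log(\vartilde/\var) - 1 + \var/\vartilde + (\m-\mtilde)^2/\vartilde\big)$.
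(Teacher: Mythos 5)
Your proposal is correct and takes essentially the same route as the paper's proof: both compute $D_A(\tilde\theta,\theta)$ directly from the definition, using $\nabla A(\theta) = (\m,\ \m^2+\var)$ together with the expression of $A$ and of $\tilde\theta-\theta$ in the $(\m,\var)$ parametrization, and then complete the square to regroup the terms. Your extra sanity check via $D_A(\tilde\theta,\theta)=\mathrm{KL}\bigl(\cN(\m,\var)\,\|\,\cN(\mtilde,\vartilde)\bigr)$ is stated with the correct argument order and indeed matches the claimed formula.
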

\begin{proof}
    We know that $\nabla A(\theta) = \mu = (\mean, \mean^2 + \var)$. Therefore, 
    \begin{align}
        \nabla A(\theta)^\top (\thetatilde - \theta) &= \mean \left(\frac{\mtilde}{\vartilde} - \frac{\mean}{\var}\right) -\frac{1}{2} (\mean^2 + \var)\left(\frac{1}{\vartilde} - \frac{1}{\var}\right)\\
        &= \frac{\mean \mtilde}{\vartilde} - \frac{\mean^2}{2\var} - \frac{\mean^2}{2\vartilde} - \frac{1}{2}\left(\frac{\var}{\vartilde} - 1\right)\\
        &= - \frac{(\mean - \mtilde)^2}{2\vartilde} + \frac{\mtilde^2}{2\vartilde} - \frac{\mean^2}{2\var} - \frac{\var - \vartilde}{2\vartilde}.
    \end{align}
    Using Equation~\eqref{eq:Atheta}, we obtain: 
    \begin{align*}
        D_A(\tilde{\theta}, \theta) &= A(\thetatilde) - A(\theta) - \nabla A(\theta)^\top (\thetatilde - \theta)\\
        &= \frac{1}{2}\log(2\vartilde) - \frac{1}{2}\log(2\var) + \frac{\var - \vartilde}{2\vartilde} +  \frac{(\mean - \mtilde)^2}{2\vartilde},
    \end{align*}
    which finishes the proof.
\end{proof}

In the Gaussian with unknown covariance, the sufficient statistics are: 
\begin{equation}
    T(\XX) = (\XX, \XX^2),
\end{equation}
where $x \in \R$ is an observation drawn from $\cN(\mean_\star, \Sigma_\star)$.

Let us now prove the proposition on the updates, which corresponds to Equations~\eqref{eq:update_m} and~\eqref{eq:update_var}: 
\begin{proposition}
    In $(\mean, \var)$ parameters, the updates write: 
    \begin{align}
        &\mean^+ = (1 - \eta) \mean + \eta \XX,\\
        &(\var)^+ = (1 - \eta)\left[\var + \eta (\m - \XX)^2 \right].
    \end{align}
\end{proposition}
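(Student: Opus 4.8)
The starting point is the dual form of the SMD iteration established earlier in the section: with mirror $A$ and stochastic gradient $\nabla f_{X}(\theta) = \nabla A(\theta) - T(X)$, the update~\eqref{eq:iter_map} reads $\nabla A(\theta^+) = \nabla A(\theta) - \eta \nabla f_X(\theta)$. Substituting $\nabla f_X(\theta) = \nabla A(\theta) - T(X)$ collapses this to the convex combination $\nabla A(\theta^+) = (1-\eta)\nabla A(\theta) + \eta\, T(X)$, i.e. in mean parameters $\mu^+ = (1-\eta)\mu + \eta\, T(X)$. So the whole proof is just a change of variables: express $\mu = \nabla A(\theta)$ in terms of $(\mean,\var)$, apply the affine update coordinate-wise, and re-read the result back in $(\mean,\var)$.

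First I would recall (as already used in the proof of Proposition~\ref{prop:DA}) that $\nabla A(\theta) = \mu = (\mean,\ \mean^2 + \var)$, and that $T(X) = (X, X^2)$. The first component of $\mu^+ = (1-\eta)\mu + \eta T(X)$ then gives immediately $\mean^+ = (1-\eta)\mean + \eta X$, which is the first claimed identity. For the second component, I get $(\mean^+)^2 + \var^+ = (1-\eta)(\mean^2+\var) + \eta X^2$, so $\var^+ = (1-\eta)(\mean^2+\var) + \eta X^2 - (\mean^+)^2$.

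The only remaining work is the elementary algebraic simplification: expand $(\mean^+)^2 = (1-\eta)^2\mean^2 + 2\eta(1-\eta)\mean X + \eta^2 X^2$, substitute, and collect terms. The $\mean^2$ terms combine as $(1-\eta)\mean^2 - (1-\eta)^2\mean^2 = \eta(1-\eta)\mean^2$, the $X^2$ terms as $\eta X^2 - \eta^2 X^2 = \eta(1-\eta)X^2$, and together with $-2\eta(1-\eta)\mean X$ this yields $\var^+ = (1-\eta)\var + \eta(1-\eta)(\mean - X)^2 = (1-\eta)\big[\var + \eta(\mean - X)^2\big]$, as claimed.

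\textbf{Main obstacle.} There is essentially no obstacle: everything reduces to the observation that subtracting $\eta\nabla f_X(\theta)$ from $\nabla A(\theta)$ is a convex combination of $\nabla A(\theta)$ and $T(X)$, plus one line of scalar algebra. The only point requiring a little care is bookkeeping the two conventions in play — that $A$ and its conjugate are defined only up to additive constants, and that $\mu_2 = \mean^2 + \var$ (not $\var$) — but both are already fixed by Equations~\eqref{eq:theta_m_sigma}--\eqref{eq:Atheta} and Proposition~\ref{prop:DA}, so no new ingredient is needed.
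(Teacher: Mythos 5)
Your proposal is correct and follows essentially the same route as the paper: both recognize that in mean parameters the update is the convex combination $\mu^+ = (1-\eta)\mu + \eta\, T(X)$, read off $\mean^+$ from the first coordinate, and obtain $(\var)^+$ by expanding $(\mean^+)^2$ in $\var^+ = \mu_2^+ - (\mean^+)^2$ and collecting terms into $\eta(1-\eta)(\mean - X)^2$. No gaps; the algebra matches the paper's line by line.
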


\begin{proof}
    Since the (stochastic) gradients write $g(\mu) = \mu - T(\XX)$, the iterations are defined by: 
\begin{align}
    &\mu_1^+ = (1 - \eta) \mu_1 + \eta \XX\\
    &\mu_2^+ = (1 - \eta) \mu_2 + \eta \XX^2.
\end{align}
Since $(\mu_1, \mu_2) = (\mean, \mean^2 + \var)$, the update on $\mean$ is immediate. For the update on $\var$, we write:
\begin{align*}
    (\var)^+ &= \mu_2^+ - (\mean^+)^2 \\
    &= (1 - \eta) \mu_2 + \eta \XX^2 - ((1 - \eta) \mean + \eta \XX)^2\\
    &= (1 - \eta) \var + (1 - \eta) \mean^2 + \eta \XX^2 - (1 - \eta)^2 \mean^2 - 2\eta(1 - \eta)\XX \mean - \eta^2 \XX^2\\
    &= (1 - \eta) \var + \eta (1 - \eta)(\mean - \XX)^2.
\end{align*}
\end{proof}

We can now proceed to proving the form of $f_\eta$. We first start by writing that:
\begin{align}
    f(\theta) &= A(\theta) - \theta^\top (\mstar, \mstar^2 + \varstar)\\
    &= \frac{1}{2}\log(2\var) + \frac{1}{2} \frac{\mean^2}{\var} - \frac{\m \mstar}{\var} + \frac{\mstar^2 + \varstar}{2\var}.
\end{align}
Therefore, 
\begin{equation}\label{eq:ftheta}
    f(\theta) = \frac{1}{2}\log(2\var) + \frac{\varstar}{2\var} + \frac{(\m - \mstar)^2}{2\var} 
\end{equation}
In particular, 
\begin{equation}
    f(\theta) - f(\theta_\star) = \frac{1}{2}\log\left(\frac{\var}{\varstar}\right) + \frac{\varstar - \var}{2\var} + \frac{(\m - \mstar)^2}{2\var}
\end{equation}
Note that, as expected, this corresponds to $D_A(\theta, \theta_\star)$, that we can also compute through Proposition~\ref{prop:DA}. We now write:
\begin{align}
    D_A(\theta, \theta^+) &= - \frac{1}{2}\log \left(\frac{(\var)^+}{\var}\right) - \frac{\var - (\var)^+}{2 \var} + \frac{(\m - \m^+)^2}{2\var}\\
    &= - \frac{1}{2}\log \left((1 - \eta) \left[ 1 + \eta \frac{(\m - \XX)^2}{\var}\right]\right) + \frac{(1 - \eta)(\var + \eta(\m - \XX)^2) - \var}{2\var} + \frac{\eta^2 (\m - \XX)^2}{2\var}\\
    &= - \frac{1}{2}\log \left((1 - \eta) \left[ 1 + \eta \frac{(\m - \XX)^2}{\var}\right]\right) - \frac{\eta}{2} + \eta(1 - \eta)  \frac{(\m - \XX)^2}{2\var} + \frac{\eta^2 (\m - \XX)^2}{2\var}\\ 
    &= - \frac{1}{2}\log \left((1 - \eta) \left[ 1 + \eta \frac{(\m - \XX)^2}{\var}\right]\right) - \frac{\eta}{2} + \eta \frac{(\m - \XX)^2}{2\var}.
\end{align}
Therefore, 
\begin{align}
    &f(\theta) - \frac{D_A(\theta, \theta^+)}{\eta} - f(\theta_\star) \\
    &=  \frac{1}{2}\log\left(\frac{\var}{\varstar}\right) + \frac{\varstar - \var}{2\var} + \frac{(\m - \mstar)^2}{2\var} + \frac{1}{2\eta}\log \left((1 - \eta) \left[ 1 + \eta \frac{(\m - \XX)^2}{\var}\right]\right) + \frac{1}{2} - \frac{(\m - \XX)^2}{2\var}\\
    &=  \frac{1}{2}\log\left(\frac{\var}{\varstar}\right) + \frac{1}{2\eta}\log \left((1 - \eta) \left[ 1 + \eta \frac{(\m - \XX)^2}{\var}\right]\right) + \frac{\varstar}{2\var} + \frac{(\m - \mstar)^2}{2\var} - \frac{(\m - \XX)^2}{2\var}.
\end{align}
Finally, $\esp{(\m - \XX)^2} = (\m - \mstar)^2 + \varstar$, and so:
\begin{equation}
    f_\eta(\theta) - f(\theta_\star) = \frac{1}{2}\log\left(\frac{\var}{\varstar}\right) + \frac{1}{2\eta}\esp{\log \left((1 - \eta) \left[ 1 + \eta \frac{(\m - \XX)^2}{\var}\right]\right)},
\end{equation}
which precisely corresponds to Equation~\eqref{eq:feta}. We now proceed to proving bounds on $\theta_\eta$ for $\eta < 1$.

\subsection{Bounding the stochastic mirror descent variance $\vareta$.}

Now that we have an explicit form for $f_\eta$, we can characterize its minimizer $\theta_\eta$, and use this to prove results on $f_\eta(\theta_\eta)$, which will in turn lead to bounds on $\vareta$. This is the core of Lemma~\ref{lemma:minimizer_feta}.

\begin{proof}{Proof of Lemma~\ref{lemma:minimizer_feta}.}
The proof will proceed in three different stages: 
\begin{itemize}
    \item Differentiating $f_\eta$ with respect to $\m$ and $\var$. 
    \item Using these expressions to obtain bounds on the $(\m_\eta, \var_\eta)$ for which $\nabla f_\eta$ is $0$.
    \item Plugging these bounds into the expression of $f_\eta$ to bound $\var_\eta$.
\end{itemize}

\paragraph{1 - Differentiating $f_\eta$.}
    Before differentiating, we rewrite:
    \begin{align}
        f_\eta(\theta) - f(\theta_\star) &= \frac{1}{2}\log\left(\var\right) + \frac{1}{2\eta}\esp{\log \left(1 + \eta \frac{(\m - \XX)^2}{\var}\right)} - \frac{1}{2}\log\left(\varstar\right) + \frac{1}{2\eta}\log (1 - \eta)\\
        &= - \frac{1 - \eta}{2\eta}\log\left(\var\right) + \frac{1}{2\eta}\esp{\log \left(\var + \eta (\m - \XX)^2\right)} - \frac{1}{2}\log\left(\varstar\right) + \frac{1}{2\eta}\log (1 - \eta).
    \end{align}
Indeed, the two terms on the right are constant and so do not matter.
If we differentiate in $m$, we obtain: 
\begin{align}
    \nabla_m \f_\eta(\theta) = \esp{\frac{1}{2\eta} 2\eta \frac{m - \XX}{\var}\frac{1}{\var + \eta (m- \XX)^2}} = \esp{\frac{m - \XX}{\var + \eta (m - \XX)^2}}.
\end{align}
Now, differentiating in $\var$ yields:
\begin{equation}
    \nabla_{\var}  \f_\eta(\theta) = - \frac{1 - \eta}{2\eta\var} + \frac{1}{2\eta}\esp{\frac{1}{\var + \eta (\m - \XX)^2}} = \frac{1}{2\var} - \esp{\frac{(\m - \XX)^2}{2\var(\var + \eta (\m - \XX)^2)}}.
\end{equation}

\paragraph{2 - Obtaining bounds on $(\m_\eta, \var_\eta)$.} The solution to $\nabla_m \f_\eta(\theta) = 0$ is $\m = \mstar$. Indeed, it is direct to verify that in this case, $\esp{\frac{\tilde{X}}{\var + \eta \tilde{X}^2}} = 0$ since $\tilde{X} = \mstar - \XX$ is symmetric (with respect to 0). For $m > \mstar$, $\esp{\frac{\tilde{X}}{\var + \eta \tilde{X}^2}} > 0$ since we integrate the same values as the previous case, but now more mass is put on the positive values (and similarly for $m<\mstar$). Note that this is the case regardless of $\var_\eta$.

We are now interested in $\var_\eta$. Note that we will not get such a clean expression as for $\m_\eta$, but only bounds. From its expression, we deduce that $\nabla_{\var}  \f_\eta(\theta_\eta) = 0$ can be reformulated as: 
\begin{equation}
    \esp{\frac{(\m_\eta - \XX)^2}{\var_\eta + \eta_\eta (\m - \XX)^2}} = 1
\end{equation}
For the upper bound, we simply write that: 
\begin{equation}
    1 = \esp{\frac{(\m_\eta - \XX)^2}{\var_\eta + \eta_\eta (\m - \XX)^2}} \leq \esp{\frac{(\m_\eta - \XX)^2}{\var_\eta}} = \frac{\varstar}{\var_\eta},
\end{equation}
from which we deduce that $\var_\eta \leq \varstar$. Let us now introduce some $\alpha > 0$. We have that: 
\begin{align}
    \esp{\frac{(\m_\eta - \XX)^2}{\var_\eta + \eta (\m_\eta - \XX)^2}} &= \esp{\frac{(\m_\eta - \XX)^2}{\alpha - \alpha + \var_\eta + \eta (\m_\eta - \XX)^2}} = \esp{\frac{(\m_\eta - \XX)^2}{\alpha}\frac{1}{1 - 1 + \frac{\var_\eta + \eta (\m_\eta - \XX)^2}{\alpha}}}
\end{align}
We now use that for $u \geq -1$, $\frac{1}{1 + u} \geq 1 - u$, and so:
\begin{align}
    \esp{\frac{(\m_\eta - \XX)^2}{\var_\eta + \eta (\m_\eta - \XX)^2}} &\geq \esp{\frac{(\m_\eta - \XX)^2}{\alpha}\left(1 - \left[ - 1 + \frac{\var_\eta + \eta (\m_\eta - \XX)^2}{\alpha}\right]\right)}\\
    &=  \esp{\frac{(\m_\eta - \XX)^2}{\alpha}\left(2 - \frac{\var_\eta}{\alpha}\right) - \eta \frac{(\m_\eta - \XX)^4}{\alpha^2}}.
\end{align}
Now, recall that $\m_\eta = \mstar$, so $\XX - \m_\eta \sim \cN(0, \Sigma_\star)$, leading to 

\begin{equation}
    1 = \esp{\frac{(\m_\eta - \XX)^2}{\var_\eta + \eta (\m_\eta - \XX)^2}} \geq \frac{\varstar}{\alpha}\left(2 - \frac{\var_\eta}{\alpha}\right) - \eta \frac{3 \Sigma^4_\star}{\alpha^2}.
\end{equation}
Rearranging terms, we obtain:
\begin{equation}
    \frac{\alpha^2}{\varstar} - 2 \alpha \geq - \var_\eta - 3 \varstar, \text{ so } \var_\eta \geq \frac{2\alpha \varstar - \alpha^2}{\varstar} - 3 \eta \varstar.
\end{equation}
We see that $\alpha = \varstar$ maximizes the right term, and we obtain the desired result, \emph{i.e.}:
\begin{equation} \label{eq:first_bounding}
    \var_\eta \geq (1 - 3\eta) \varstar.
\end{equation}

Unfortunately, we see that this bound is only informative for $3\eta < 1$. For the rest of the cases, we will use the Markov inequality instead, which writes for all $a>0$: 
\begin{equation}
    \mathbb{P}\left(\frac{(\m_\eta - \XX)^2}{\var_\eta + \eta (\m_\eta - \XX)^2} \geq a \right) \leq \frac{1}{a}\esp{\frac{(\m_\eta - \XX)^2}{\var_\eta + \eta (\m_\eta - \XX)^2}} = \frac{1}{a}.
\end{equation}
Yet, 
\begin{align}
    \mathbb{P}\left(\frac{(\m_\eta - \XX)^2}{\var_\eta + \eta (\m_\eta - \XX)^2} \geq a \right) = \mathbb{P}\left(\frac{(\m_\eta - \XX)^2}{\varstar} \geq \frac{a}{1 - \eta a} \frac{\var_\eta}{\varstar}\right) = 2\mathbb{P}\left(\frac{\XX - \mstar}{\Sigma_\star} \geq \sqrt{\frac{a}{1 - \eta a}} \frac{\Sigma_\eta}{\Sigma_\star}\right).
\end{align}
Therefore, denoting $\Phi$ the cumulative distribution function of the standard Gaussian, we have:
\begin{equation}
    2\left(1 - \Phi\left(\sqrt{\frac{a}{1 - \eta a}} \frac{\Sigma_\eta}{\Sigma_\star} \right) \right) \leq \frac{1}{a},
\end{equation}
and since $\Phi^{-1}$ is an increasing function, this leads to: 
\begin{equation}
    \sqrt{\frac{a}{1 - \eta a}} \frac{\Sigma_\eta}{\Sigma_\star} \geq \Phi^{-1}\left(1 - \frac{1}{2a}\right),
\end{equation}
so that: 
\begin{equation}
    \Sigma_\eta \geq \sqrt{1 - \eta a}\frac{\Phi^{-1}\left(1 - \frac{1}{2a}\right)}{\sqrt{a}}\Sigma_\star
\end{equation} 
One can check that $\Phi^{-1}\left(1 - \frac{1}{2a}\right)/\sqrt{a} < 1$ for all $a$, which is consistent with the fact that $\var_\eta \leq \varstar$. Also note that for $\eta = 1$, a non-trivial bound would require $a < 1$, but then $\Phi^{-1}\left(1 - \frac{1}{2a}\right) \leq 0$ so (as expected), we cannot get better than $\var_\eta \geq 0$. However, the previous bounding (Equation~\eqref{eq:first_bounding}) is more precise for small $\eta$ since $\Phi^{-1}\left(1 - \frac{1}{2a}\right)/\sqrt{a} < 1 - c$ with $c>0$ a constant regardless of $a$. In particular, for any $\varepsilon$, by using any $1 < a < 1/(1 - \varepsilon)$, we obtain that $\var_\eta \geq \alpha_\varepsilon \varstar$ for some constant $\alpha_\varepsilon$ that only depends on the $a$ that we choose. In particular, we can handle the cases $\eta = 1/2$ and $\eta = 1/3$ that gave trivial results $\var_\eta \geq 0$ with the previous bounds.

The last part consists in proving that $f_\eta(\theta_\eta) - f(\theta_\star) \geq \frac{1}{2}\log\left(\frac{\var_\eta}{\varstar}\right)$. To do so, we start back from
\begin{equation*}
    f_\eta(\theta) - f(\theta_\star) = \frac{1}{2}\log\left(\frac{\var}{\varstar}\right) + \frac{1}{2\eta}\esp{\log \left((1 - \eta) \left[ 1 + \eta \frac{(\m - \XX)^2}{\var}\right]\right)},
\end{equation*}
and show that $\esp{\log \left((1 - \eta) \left[ 1 + \eta \frac{(\m_\eta - \XX)^2}{\var_\eta}\right]\right)} \geq 0$. We start by the inequality $\log (1 + x) \geq \frac{x}{1 + x}$, leading to:
\begin{align}
    \esp{\log \left((1 - \eta) \left[ 1 + \eta \frac{(\m_\eta - \XX)^2}{\var_\eta}\right]\right)} &\geq \esp{\frac{(1 - \eta) \left[ 1 + \eta \frac{(\m_\eta - \XX)^2}{\var_\eta}\right] - 1}{(1 - \eta) \left[ 1 + \eta \frac{(\m_\eta - \XX)^2}{\var_\eta}\right]}}\\
    &= \esp{ \frac{\eta(1 - \eta) \frac{(\m_\eta - \XX)^2}{\var_\eta} - \eta}{(1 - \eta) \left[ 1 + \eta \frac{(\m_\eta - \XX)^2}{\var_\eta}\right]}}\\
    &=\eta \esp{  \frac{(1 - \eta) (\m_\eta - \XX)^2 - \var_\eta}{(1 - \eta) \left[ \var_\eta + \eta (\m_\eta - \XX)^2\right]}}
\end{align}
Recall that the optimality conditions for $(\m_\eta, \var_\eta)$ write:
\begin{equation}
    1 = \esp{\frac{(\m_\eta - \XX)^2 }{ \var_\eta + \eta (\m_\eta - \XX)^2}} = \frac{1}{\eta}\esp{ 1 - \frac{\var_\eta}{ \var_\eta + \eta (\m_\eta - \XX)^2}},
\end{equation}
so that 
\begin{equation}
    \esp{\frac{\var_\eta}{ \var_\eta + \eta (\m_\eta - \XX)^2}} = 1 - \eta.
\end{equation}
Combining these, we obtain that 
\begin{align*}
    \esp{\log \left((1 - \eta) \left[ 1 + \eta \frac{(\m_\eta - \XX)^2}{\var_\eta}\right]\right)} &\geq \eta \esp{  \frac{(1 - \eta) (\m_\eta - \XX)^2 - \var_\eta}{(1 - \eta) \left[ \var_\eta + \eta (\m_\eta - \XX)^2\right]}}\\
    &= \eta \left(\esp{  \frac{(\m_\eta - \XX)^2}{\var_\eta + \eta (\m_\eta - \XX)^2}} - \frac{1}{1 - \eta}\esp{  \frac{\var_\eta}{ \var_\eta + \eta (\m_\eta - \XX)^2}} \right) = 0,
\end{align*}
which is the desired result.

The final result is obtained by plugging the lower bounds for $\var_\eta$ into this bound, leading to either $\vareta \leq - \frac{1}{2\eta}\log(1 - 3\eta)$ for $\eta < 1/3$ or $\vareta \leq - \frac{1}{2\eta} \log \alpha_\varepsilon$ for $\eta < 1 - \varepsilon$.

\end{proof}

\subsection{Unrolling the recursions to derive actual convergence results.}
\label{app:conv_from_theorems}

\subsubsection{Proof of Theorem~\ref{thm:map_estimator}}
Now that we have bounded the stochastic mirror descent variance $\vareta$ in this setting, we can plug it into Theorem~\ref{thm:rates_smd_sc} to obtain finite-time convergence guarantees on the MAP and MLE estimators.

\begin{proof}[Proof of Theorem~\ref{thm:map_estimator}]
    Starting from Theorem~\ref{thm:rates_smd_sc}, we obtain:
    \begin{equation}
        D_A(\theta_\star, \theta\at{k+1}) \leq (1 - \eta) D_A(\theta_\star, \theta\at{k}) - \frac{\eta}{2} \log\left(1 - 3\eta\right) \leq (1 - \eta) D_A(\theta_\star, \theta\at{k}) + \frac{3\eta^2}{2},
    \end{equation}
    where the right term is replaced by $c_\varepsilon$ (where $c_\epsilon = - \frac{1}{2} \log \alpha_\varepsilon$) for $k \leq 3$. Taking $\eta = 1/k$ for $k> 1$ and multiplying by $k$ leads for $k > 3$ to:
    \begin{equation}
        k D_A(\theta_\star, \theta\at{k+1}) \leq (k-1) D_A(\theta_\star, \theta\at{k}) +\frac{3}{2k}.
    \end{equation}
    Therefore, a telescopic sum leads to, for $n_0 > 0$:
    \begin{equation}
        (n + n_0) D_A(\theta_\star, \theta\at{n}) \leq n_0 D_A(\theta_\star, \theta\at{0}) +\frac{3}{2} \sum_{k=n_0}^{n + n_0} \frac{1}{k} + 2 c_{1/2},
    \end{equation}
    and so, since $\sum_{k=n_0}^n \frac{1}{k} \leq \log(n + n_0 + 1) - \log(n_0)$: 
    \begin{equation}
        D_A(\theta_\star, \theta\at{n}) \leq \frac{n_0 D_A(\theta_\star, \theta\at{0}) +(3/2)\log (1 + (n+1)/n_0) + \Gamma}{n + n_0},
    \end{equation}
    where $\Gamma = 2 c_{1/2}$ and we actually have $\Gamma=0$ for $n_0 > 3$.
\end{proof}

\subsubsection{$O(1/n)$ convergence result.}
We now consider a different estimator (from the MAP and the MLE), which we construct in the following way:
\begin{itemize}
    \item Choose $n_0 \geq 6$ and initial parameter $\tilde\theta\at{n_0}$.
    \item Obtain $\tilde\theta\at{n}$ by performing $n - n_0$ stochastic mirror descent steps from $\tilde\theta\at{n_0}$ with step-sizes $\eta_k = 2 / (k+1)$ for $k \in \{n_0, ..., n\}$.
\end{itemize}
This estimator is a modified version of the MAP, where $n_0$ controls how much weight we would like to put on the prior, and $\tilde\theta\at{n_0}$ would typically be the same starting parameter as for the MAP estimator. This estimator is built so that we can use the convergence analysis from~\citet{lacoste2012simpler} and obtain a $O(1/n)$ convergence rate. Note that we make the $n_0 \geq 6$ restriction for simplicity to ensure that $\vareta \leq 3/2$, but the result can be easily adapted to $n_0 \geq 2$.  

\begin{proposition}
    After $n - n_0$ steps, this modified estimator $\theta\at{n}$ verifies:  
    \begin{equation}
    \mathbb{E}D_h(\theta_\star, \tilde\theta\at{n}) \leq \frac{2n_0(n_0 - 1)}{n(n-1)} D_h(\theta_\star, \tilde\theta\at{n_0}) + \frac{6}{n}.
\end{equation}
\end{proposition}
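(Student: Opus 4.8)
The plan is to apply the time-varying step-size version of Equation~\eqref{eq:main_eq_thm_sc} with the step-sizes $\eta_k = 2/(k+1)$ and the relative strong convexity parameter $\mu = 1$ (since $f_\xi$ is $1$-relatively-strongly-convex with respect to $A$), and then to unroll the resulting recursion following the weighting trick of~\citet{lacoste2012simpler}. Concretely, neglecting the nonnegative $\eta_k[f_{\eta_k}(\theta\at{k}) - f_{\eta_k}^\star]$ term in~\eqref{eq:main_eq_thm_sc} and using $1 - \eta_k \mu = 1 - 2/(k+1) = (k-1)/(k+1)$, we get
\begin{equation*}
    \esp{D_h(\theta_\star, \tilde\theta\at{k+1})} \leq \frac{k-1}{k+1}\esp{D_h(\theta_\star, \tilde\theta\at{k})} + \eta_k^2\, \sigma^2_{\star, \eta_k}.
\end{equation*}
Here I would use the bound $\sigma^2_{\star,\eta_k} \leq 3/2$, which holds for $\eta_k \leq 1/3$, i.e. for $k+1 \geq 6$, hence the restriction $n_0 \geq 6$; this comes from Lemma~\ref{lemma:minimizer_feta} together with $-\frac{1}{2\eta}\log(1-3\eta) \leq \frac{3}{2}$ for small $\eta$ (a one-line convexity estimate on $-\log(1-u)$). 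So $\eta_k^2 \sigma^2_{\star,\eta_k} \leq 6/(k+1)^2$.

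Next I would multiply through by $k(k+1)$, so that the recursion telescopes: setting $a_k = k(k-1)\esp{D_h(\theta_\star, \tilde\theta\at{k})}$, the inequality becomes $a_{k+1} \leq a_k + 6 k/(k+1) \leq a_k + 6$. Summing from $k = n_0$ to $n-1$ yields $a_n \leq a_{n_0} + 6(n - n_0) \leq n_0(n_0-1)\esp{D_h(\theta_\star,\tilde\theta\at{n_0})} + 6n$. Dividing by $n(n-1)$ gives
\begin{equation*}
    \esp{D_h(\theta_\star, \tilde\theta\at{n})} \leq \frac{n_0(n_0-1)}{n(n-1)}\esp{D_h(\theta_\star, \tilde\theta\at{n_0})} + \frac{6n}{n(n-1)}.
\end{equation*}
The second term is $6/(n-1)$, which is slightly weaker than the claimed $6/n$; to land exactly on the stated bound one either absorbs the discrepancy into a marginally more careful summation ($\sum k/(k+1)$ is strictly less than the number of terms, giving some slack) or notes that the factor $2$ in front of $n_0(n_0-1)$ in the statement provides room to rescale. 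I would reconcile the constants by keeping the $\sum_{k=n_0}^{n-1} k/(k+1) = (n - n_0) - \sum (1/(k+1))$ correction explicitly, which shaves off enough to reach $6/n$, and the spare factor of $2$ on the initialization term covers the boundary bookkeeping from starting the telescope at $n_0$ rather than absorbing $a_{n_0}$ cleanly.

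The main obstacle is purely the constant-chasing in the telescoping step: making sure the multiplier $k(k+1)$ (versus $k(k-1)$ or $(k+1)(k+2)$) is chosen so that $\frac{k-1}{k+1}\cdot(\text{multiplier at }k+1)$ exactly matches the multiplier at $k$, and then tracking how the residual $\sum \eta_k^2\sigma^2 \cdot (\text{multiplier})$ accumulates to give $6n$ rather than something larger. Everything else — the recursion itself, the $\sigma^2_{\star,\eta} \leq 3/2$ bound, the relative strong convexity constant — is immediate from the results already established (Equation~\eqref{eq:main_eq_thm_sc}, Lemma~\ref{lemma:minimizer_feta}, and the proposition stating $1$-relative-strong-convexity of $f_\xi$ with respect to $A$). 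No new structural idea is needed beyond invoking the standard $\lacostetrick$-style weighting, which the paper explicitly signals.
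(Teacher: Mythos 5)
Your proposal follows the paper's proof essentially step for step: the recursion obtained from Equation~\eqref{eq:main_eq_thm_sc} with $\mu=1$ (dropping the nonnegative $f_{\eta_k}$ gap), the variance bound $\vareta \le 3/2$ tied to $n_0 \ge 6$, and the $k(k+1)$ multiplier with the weighting of \citet{lacoste2012simpler}, telescoping $a_k = k(k-1)\esp{D_h(\theta_\star,\tilde\theta\at{k})}$, are exactly the paper's argument. Two constant-level points, however, need fixing.

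First, your justification of $\vareta \le 3/2$ is backwards: since $-\log(1-u) \ge u$, the bound of Lemma~\ref{lemma:minimizer_feta} satisfies $-\frac{1}{2\eta}\log(1-3\eta) \ge \frac{3}{2}$ for all $\eta \in (0,1/3)$, not $\le \frac{3}{2}$, and it even diverges as $\eta \to 1/3$, so ``$\eta_k \le 1/3$'' cannot suffice; with $n_0 \ge 6$, i.e.\ $\eta_k \le 2/7$, the lemma only gives $\vareta \le \frac{7}{4}\log 7 \approx 3.4$. The paper simply asserts $\vareta \le 3/2$ for $n_0 \ge 6$ (its constants are explicitly not optimized), so matching that assertion is fine, but your ``one-line convexity estimate'' as stated is false; a self-contained version should either quote the lemma's bound and accept a larger numerical constant in the final rate, or prove the sharper estimate separately. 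Second, your telescoping discards exactly the margin you need: bounding $6(n-n_0) \le 6n$ before dividing yields $6/(n-1)$, whereas the paper keeps $6(n-n_0)$ and uses $(n-n_0)/(n-1) \le 1$ to land on $6/n$ directly. Your first suggested repair (retaining the $\sum_k \frac{k}{k+1}$ correction, equivalently $n-n_0 \le n-1$) is precisely this and works; the alternative of leaning on the spare factor $2$ in front of $n_0(n_0-1)$ does not, since that slack disappears when $D_h(\theta_\star,\tilde\theta\at{n_0})$ is small.
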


\begin{proof}    
Let us note $D_k = \esp{D_h(\theta_\star, \tilde\theta^{(k)})}$. In this case, using that $\vareta \leq 3/2$, Theorem 3.1 writes (since $\mu = 1$): 
\begin{equation}
    D_{k+1} \leq (1 - \eta_k) D_k + \frac{3\eta_k^2}{2} . 
\end{equation}
At this point, we can multiply by $k(k+1)$ on both sides, and take $\eta_k = \frac{2}{k+1}$ for $k \geq n_0$. Remarking that $1 - \eta_k = 1 - \frac{2}{k+1} = \frac{k-1}{k+1}$, we obtain that:
\begin{equation}
    (k+1) k D_{k+1} \leq k (k-1) D_k + \frac{6k}{k+1} \leq  k (k-1) D_k + 6. 
\end{equation}
Unrolling this recursion from $k=n_0$ to $k=n-1$ (since $(k+1) k D_{k+1} = L_{k+1}$, where $L_k = k(k-1)D_{k}$), we obtain: 
\begin{equation}
    n(n-1) D_n \leq n_0(n_0 - 1) D_{n_0} + \sum_{k=n_0}^{n-1} 6,
\end{equation}
and the result follows by dividing by $n(n - 1)$, and using that $(n - n_0) / (n-1) \leq 1$.
\end{proof}

\subsection{The case of the MLE}
\label{app:MLE}
For the MLE estimator, directly applying the mirror descent approach would require using $\eta_0 = 1$, starting from an arbitrary $\theta\at{0}$ (that would not affect the results anyway). The problem in this case is that $D_h(\theta_\star, \theta\at{1})$ is infinite since $\Sigma\at{2} = 0$. This also means that we cannot start the stochastic mirror descent algorithm from $\theta\at{1}$, since the recursion would still involve the infinite $D_h(\theta_\star, \theta\at{1})$. Therefore, in the case of the MLE, considering that the first two samples are $X\at{1}$ and $X\at{2}$, then the first two points are: 
\begin{equation}
    \m\at{1} = X\at{1}, \Sigma\at{1} = 0 \text{ and } \m\at{2} = \frac{X\at{1} + X\at{2}}{2}, (\Sigma\at{2})^2 = \frac{(X\at{1} - X\at{2})^2}{4}.
\end{equation}

More generally, a direct recursion for the MLE leads to: 
\begin{equation}
    \m\at{n} = \frac{1}{n}\sum_{k=1}^n X\at{k}, \ \  (\Sigma\at{n})^2 = \frac{1}{n} \sum_{k=1}^n (X\at{k} - \m\at{n})^2.
\end{equation}
From this, we derive that:
\begin{align}
    \esp{(\Sigma\at{n})^2} &= \esp{(X\at{n} - \m\at{n})^2}\\
    &= \esp{\left(\left(1 - \frac{1}{n}\right)X\at{n} - \frac{n-1}{n} \m\at{n-1}\right)^2}\\
    &= \left(\frac{n-1}{n}\right)^2\esp{\left(X\at{n} - \mstar - (\m\at{n-1} - \mstar)\right)^2}\\
    &= \left(\frac{n-1}{n}\right)^2\esp{\left(X\at{n} - \mstar\right)^2 + \left(\m\at{n-1} - \mstar\right)^2}\label{eq:133}\\
    &= \left(\frac{n-1}{n}\right)^2\left( \varstar + \frac{1}{n-1} \varstar \right) = \left(1 - \frac{1}{n}\right) \varstar,
\end{align}
where~\eqref{eq:133} comes from the fact that $X\at{n}$ and $m\at{n-1}$ are independent with mean $m_\star$. Plugging this into the expression of $D_h(\theta_\star, \theta)$ for the MLE after $n$ steps, we obtain:
\begin{equation}
    D_h(\theta_\star, \theta\at{n}) = -\frac{1}{2} \esp{\log \frac{(\Sigma\at{n})^2}{\varstar}}.
\end{equation}
Unfortunately, there is no closed-form for this expression for arbitrary $n$, hence the need for a more involved analysis, for instance through the mirror descent framework. For the case $n=2$ however (which is the one we are interested in), we obtain that 
\begin{equation}
    D_h(\theta_\star, \theta\at{2}) = -\frac{1}{2} \esp{\log \left( \frac{X\at{1} - X\at{2}}{2\Sigma_\star} \right)^2} = -\frac{1}{2} \esp{\log \frac{Y^2}{2}},
\end{equation}
where $Y = \frac{X\at{1} - X\at{2}}{\sqrt{2}\Sigma_\star} \sim \cN(0, 1)$. Therefore, this can simply be treated as a constant that we can precisely evaluate numerically (for instance remarking that $Y^2$ is gamma distributed and using results on logarithmic expectations of gamma distributions).

For $n > 2$, it is tempting to use the convexity of $- \log$ to use a similar reasoning, but this only leads to a constant bound on $D_h(\theta_\star, \theta\at{n})$. 


\end{document}